\acrodef{BO}{{\sl Benjamin-Ono}}
\acrodef{rBO}{{\sl regularized Benjamin-Ono}}
\acrodef{rILW}{{\sl regularized Intermediate Long Wave}}
\acrodef{DSW}{{\sl Dispersive Shock Wave}}
\acrodef{DSWs}{{\sl Dispersive Shock Waves}}
\acrodef{ILW}{{\sl Intermediate Long Wave}}
\acrodef{CGN}{{\sl Conjugate Gradient-Newton}}
\acrodef{SW/SW}{{\sl Shallow water / Shallow water}}
\acrodef{B/B}{{\sl Boussinesq / Boussinesq}}
\newcommand{\sech}{\mathop{\operator@font sech}}
\newcommand{\sign}{\mathop{\operator@font sign}}
\newtheorem{lemma}{Lemma}[section]
\newtheorem{theorem}{Theorem}[section]
\newtheorem{proposition}{Proposition}[section]
\newtheorem{corolary}{Corolary}[section]
\newtheorem{remark}{Remark}[section]
\numberwithin{equation}{section}
\begin{document}

\title[]{Notes on solitary-wave solutions of Rosenau-type equations}

\author{Angel Dur\'an}
\address{\textbf{A.~Dur\'an:} Applied Mathematics Department, University of Valladolid, P/ Belen 15, 47011, Valladolid, Spain}
\email{angeldm@uva.es}

\author{Gulcin M. Muslu}
\address{\textbf{G.~Muslu:} Istanbul Technical University, Department of Mathematics, Maslak 34469, Istanbul, Turkey, and Istanbul Medipol University, School of Engineering and Natural Sciences, Beykoz 34810, Istanbul, Turkey}
\email{gulcin@itu.edu.tr}

%    Current address

%    \thanks will become a 1st page footnote.
%\thanks{D. Mitsotakis was supported by the Marsden Fund}

%    General info
\subjclass[2010]{35C07,76B25,}

%\date{\today}

%\dedicatory{This paper is dedicated to our advisors.}

\keywords{Rosenau-type equations,  solitary waves,  Normal Form Theory, Concentration-Compactness Theory, Petviashvili's iterative method}

\begin{abstract}
The present paper is concerned with the existence of solitary wave solutions of Rosenau-type equations. By using two standard theories, Normal Form Theory and Concentration-Compactness Theory, some results of existence of solitary waves of three different forms are derived. The results depend on some conditions on the speed of the waves with respect to the parameters of the equations. They are discussed for several families of Rosenau equations present in the literature. The analysis is illustrated with a numerical study of generation of approximate solitary-wave profiles from a numerical procedure based on the Petviashvili iteration.
\end{abstract}

\maketitle
\tableofcontents

\section{Introduction}
In this study, we consider Rosenau-type equations of the form
\begin{eqnarray}
u_{t}+\epsilon u_{x}+\alpha u_{xxt}+\eta u_{xxx}+\beta u_{xxxxt}+\gamma u_{xxxxx}+(g(u))_{x}=0,\label{GM1}
\end{eqnarray}
where $\alpha,\beta,\gamma,\epsilon,\eta\in\mathbb{R}$ and $g$ is a smooth nonlinear function satisfying one of these two conditions:
\begin{itemize}
\item[(P1)] $g=g(u,u',u'',u''')$ and its Taylor series vanishes at the origin along with its first derivatives, and the dependence of $g$ on $u'$ and $u'''$ occurs as sums of even-order products of these two variables, \cite{Champ}.
\item[(P2)] $g=g(u)$ is a pure function of $u$, with no dependence on the derivatives.
\end{itemize}
Some particular cases of (\ref{GM1}), present in the literature and that will be analyzed below, are:
\begin{itemize}
\item The Rosenau equation (\cite{Rosenau1988,Park1993,Demirci2022} and references therein) corresponds to $\epsilon=\beta=1, \alpha=\eta=\gamma=0$, and the cases of $g$:
\begin{itemize}
\item Classical: $g(u)=u^{2}/2$.
\item Single power: $g(u)=\frac{u^{p+1}}{p+1},\; p\geq 1$.
\item Cubic-quintic, \cite{Erbay2020}: $g(u)=\frac{u^{3}}{3}+\frac{r}{5}u^{5}$, with $r$ constant.
\end{itemize}
\item The Rosenau-RLW equation (\cite{ZZZ2010,EsfahaniP2014} and references therein) corresponds to $\eta=\gamma=0, \epsilon, \beta>0, \alpha\in\mathbb{R}$ and $g$ quadractic, say $g(u)=u^{2}/2$. It may be generalized to a single power $g(u)=\frac{u^{p+1}}{p+1},\; p\geq 1$.
\item The Rosenau-KdV equation (\cite{Zuo2006,Esfahani2011,EsfahaniP2014} and references therein) corresponds to taking $\alpha=\gamma=0, \epsilon, \beta>0, \eta\in\mathbb{R}$ and $g$ quadractic or, in general,  $g(u)=\frac{u^{p+1}}{p+1},\; p\geq 1$.
\item The Rosenau-Kawahara equation (\cite{Zuo2006,He2015} and references therein) corresponds to $\alpha=0, \epsilon, \beta>0, \gamma\in\mathbb{R}$ (with $\gamma=-1$ in \cite{Zuo2006}) and $g(u)=\frac{u^{p+1}}{p+1},\; p\geq 1$, \cite{He2015} (with $p=1$ in \cite{Zuo2006}).
\item The Rosenau-RLW-Kawahara equation (\cite{He2016,su} and references therein) corresponds to $\epsilon=2k>0, \alpha=-\mu, \eta\in\mathbb{R},\beta>0$, and, \cite{su}
$$g(u)=A\frac{u^{2}}{2}+B\frac{u^{m+1}}{m+1}+s\left(\frac{u_{x}^{2}}{2}+uu_{xx}\right),\; A,B,s\in\mathbb{R}.$$ (Other cases in \cite{He2016} may not satisfy (P1).)
\end{itemize}
The literature on the previous families of Rosenau-type equations, sketched above, is now described in more detail. The Rosenau equation was originally derived in \cite{Rosenau1988} to study the dynamics of dense discrete lattices. Some mathematical poperties of the equation with different types of nonlinearities are studied in \cite{Park1993,Erbay2020,Demirci2022}. They concern well-posedness of the initial-value problem (ivp), Lie symmetries, exact periodic traveling wave solutions (of cnoidal type), as well as the numerical generation of solitary wave solutions and computational study of their dynamics. In \cite{Zeng2003}, the Concentration-Compactness theory of \cite{Lions} is applied to study the existence of solitary wave solutions of equations of BBM type which include the Rosenau equation. This is mentioned by the authors in \cite{Erbay2020} as starting point of their numerical investigation of the waves with the Petviashvili iteration, \cite{Petv1976}. The orbital stability, \cite{GrillakisSS1987} of solitary wave solutions of the Rosenau-RLW equation (whose existence is assumed as critical points of the enegy subject to constant charge) is investigated in \cite{EsfahaniP2014}. Some references are concerned with the derivation of exact solitary wave solutions for particular values of the speed. This is the case of \cite{Esfahani2011,Zuo2006} for the Rosenau-RLW, Rosenau-KdV equations and generalized versions, \cite{Zuo2006,He2015} for the Rosenau-Kawahara equation, and \cite{He2016} for the Rosenau-RLW-Kawahara equation, among others. The numerical treatment of the equations is mainly based on finite differences, \cite{He2015,He2016,HeP}, finite element methods, (\cite{ShiJ2023} and references therein), and Fourier pseudospectral discretizations in space with an explicit, fourth-order Runge-Kutta time integrator for the Rosenau equation in \cite{Demirci2022}.

The main contributions of the present paper are concerned with the existence of solitary wave solutions of (\ref{GM1}), focused on three types: Classical Solitary Waves (CSW), with monotone and nonmonotone decay, and Generalized Solitary Waves (GSW). To this end, some mathematical properties of (\ref{GM1}) are described in section \ref{sec2}. They include well-posedness of the ivp, conserved quantities and Hamiltonian formulation. This section is completed with the description of the problem of searching for solitary wave solutions. The problem is then analized in the following two sections using two classical theories. In section \ref{sec3} we study the existence with the Normal Form Theory (NFT), \cite{HaragusI2011,IoossA}, which is used to identify solitary-wave structures as homoclinic orbits of the solitary wave problem considered as a dynamical system. Our study will be based on different approaches, \cite{Champ,Choudhury2007,Iooss95}, and will be applied to the families of Rosenau-type equations mentioned above. This includes a discussion, for each family, on the range of values of the speed and depending on the parameters of the equations, required for the existence results. The details are in Appendix \ref{GMapp1}. The existence of the three types of waves will be illustrated here with a computational study of generation of approximate profiles. This will be done by solving numerically the fourth-order differential equation satisfied by the solitary-wave profiles. The numerical procedure consists of discretizing the periodic, ivp on a long enough interval with a Fourier collocation method and solving iteratively the resulting algebraic system for the discrete Fourier coefficients of the approximation with Petviashvili-type methods, \cite{Petv1976,pelinovskys,AlvarezD2014,AlvarezD2015}. The scheme is described in Appendix \ref{GMapp2} and from the numerical experiments some additional properties of the waves, such as the asymptotic decay and the speed-amplitude relation, are explored.

A different point of view is taken in section \ref{sec4}, where the existence of CSW's is analyzed from the Concentration-Compactness theory (CCT) of Lions, \cite{Lions}. In this case, the CSW's are found as minimizers of a constrained variational problem. Compared to the study performed in \cite{Zeng2003} for BBM-type equations, our approach consists of the analysis of different minimization problems, more related to other applications of the theory, cf. e.~g. \cite{AnguloS2020,EsfahaniL2021,DDS1}. We will make a general assumption on a key property of coercivity of the functional to be minimized, that will be characterized in terms of the speed of the wave. The study of this condition for each of the families of Rosenau equations introduced above will lead us to the corresponding, specific conditions on the speed in order to ensure the existence. We will also prove that the results obtained are extensions of some derived from the NFT, in the sense that the conditions for the speed are similar but do not impose restrictions on the proximity to limiting values, that are required by the local character of the NFT. The numerical procedure described in Appendix \ref{GMapp2} will be used again to illustrate the generation of some approximate profiles in this case; all were shown to be nonmonotone. The paper will be closed with a section of concluding remarks.

Throughout the paper, $C$ will denote a generic constant, that may depend on the corresponding parameters involved in the context.
\section{Some mathematical properties of the Rosenau equations}
\label{sec2}
In this section, some mathematical properties of (\ref{GM1}), that will be used in the paper, are collected.
In order to delimit the \lq admissible\rq\ equations (\ref{GM1}) from a modelling point of view, a first step would be the study of linear well-posedness of the ivp in $L^{2}$, meaning, as usual, existence and uniqueness of solutions and continuous dependence on the initial data. If
\begin{eqnarray*}
\widehat{f}(k)=\int_{\mathbb{R}}f(x)e^{-ikx}dx,\quad k\in\mathbb{R},
\end{eqnarray*}
denotes the Fourier transform of $f\in L^{2}$, we consider the ivp of the linearized equation
\begin{eqnarray}
&&u_{t}+\epsilon u_{x}+\alpha u_{xxt}+\eta u_{xxx}+\beta u_{xxxxt}+\gamma u_{xxxxx}=0,\label{GM1L}\\
&&u(x,0)=u_{0}(x),\quad x\in\mathbb{R}, t>0,\nonumber
\end{eqnarray}
in Fourier space (with respect to $x$) to have
\begin{eqnarray}
&&\frac{d}{dt}\widehat{u}(k,t)+ikl(k)\widehat{u}(k,t)=0,\quad k\in\mathbb{R}, t>0,\label{GM1L2}\\
&&\widehat{u}(k,0)=\widehat{u_{0}}(k),\nonumber
\end{eqnarray}
where
\begin{eqnarray*}
l(k)=\frac{\epsilon-\eta k^{2}+\gamma k^{4}}{1-\alpha k^{2}+\beta k^{4}}.
\end{eqnarray*}
The solution of (\ref{GM1L2}) is
\begin{eqnarray*}
\widehat{u}(k,t)=e^{-ikl(k)t}\widehat{u_{0}}(k),\quad k\in\mathbb{R}, \quad t>0.
\end{eqnarray*}
Then, well-posedness of (\ref{GM1L}) holds if $m(k,t)=e^{-ikl(k)t}$ is bounded in finite intervals of $t$. It looks clear that this happens when the denominator of $l(k)$ has no zeros on $k\in\mathbb{R}$. If we consider the polynomial
\begin{eqnarray*}
P(x)=1-\alpha x+\beta x^{2},\quad x\geq 0,
\end{eqnarray*}
then linear well-posedness requires
\begin{eqnarray}
\alpha^{2}<4\beta.\label{linearw}
\end{eqnarray}
Condition (\ref{linearw}) will be assumed throughout the paper. In particular, (\ref{linearw}) implies $\beta>0$. (Note that the case $\alpha=\beta=0$ corresponds to the Kawahara or fifth-order KdV equation, \cite{Levandosky1999,EsfahaniL2021,KMolinet2018}.) We will also assume $\epsilon>0$, as in all the literature on Rosenau-type equations that we are aware of.

On the other hand, local well-posedness results of the full nonlinear problem (\ref{GM1}) with suitable nonlinear terms can be derived from several approaches, \cite{AlbertB1991,BonaChS2004}. In what follows, for $s\in\mathbb{R}$, $H^{s}=H^{s}(\mathbb{R})$ will be the $L^{2}$-based Sobolev space over $\mathbb{R}$, with norm denoted by $||\cdot||_{s}$, and for $T>0$, $X_{s}=C(0,T,H^{s})$ will stand for the space of continuous functions $u:[0,T]\rightarrow H^{s}$ with norm
$$||u|_{X^{s}}=\max_{0\leq t\leq T}||u(t)||_{s}.$$
\begin{theorem}
\label{GMt1}
Let $s\geq 0$. Assume that $\epsilon>0$, that (\ref{linearw}) holds, and that $g$ in (\ref{GM1}) is locally Lipschitz in $H^{s}$ with $g(0)=0$. Let $u_{0}\in H^{s}$. Then there exists $T>0$ and a unique solution $u\in X^{s}$ of (\ref{GM1}) with initial condition $u_{0}$.
\end{theorem}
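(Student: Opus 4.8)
The plan is to reformulate \eqref{GM1} as a Duhamel integral equation and solve it by a contraction-mapping argument, in the spirit of \cite{AlbertB1991,BonaChS2004}. First I would introduce the Fourier multiplier $M = 1 + \alpha\partial_x^2 + \beta\partial_x^4$, whose symbol is $P(k^2) = 1 - \alpha k^2 + \beta k^4$. Condition \eqref{linearw} guarantees that $P$ has no real zeros and, since it forces $\beta>0$, that $P(k^2)\geq c>0$ uniformly in $k\in\mathbb{R}$; hence $M$ is boundedly invertible on every $H^s$, with $M^{-1}$ a bounded multiplier. Applying $M^{-1}$ to \eqref{GM1} puts it in the evolutionary form
\[
u_t = -Lu - M^{-1}\partial_x\, g(u), \qquad L := M^{-1}\bigl(\epsilon\partial_x + \eta\partial_x^3 + \gamma\partial_x^5\bigr),
\]
so that, writing $S(t) := e^{-tL}$ for the linear propagator, Duhamel's formula gives the integral equation
\[
u(t) = S(t)u_0 - \int_0^t S(t-\tau)\, M^{-1}\partial_x\, g(u(\tau))\, d\tau .
\]
A fixed point of the associated map $\Phi$, defined by the right-hand side, is the desired mild solution.

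The argument rests on two structural facts. The first is that $S(t)$ is a $C_0$-group of isometries on each $H^s$: the symbol of $L$ is $ik\, l(k)$, with $l(k)=\frac{\epsilon-\eta k^2+\gamma k^4}{1-\alpha k^2+\beta k^4}$ \emph{real} for $k\in\mathbb{R}$, so $S(t)$ has symbol $m(k,t)=e^{-ik l(k) t}$ of modulus one — exactly the multiplier from the linear analysis of \eqref{GM1L2}, now seen to be unitary — whence $\|S(t)w\|_s=\|w\|_s$ for all $w\in H^s$, and strong continuity follows by dominated convergence. The second is that $M^{-1}\partial_x$ is \emph{smoothing}: its symbol $ik/P(k^2)$ is bounded and decays like $k^{-3}$, so $M^{-1}\partial_x$ maps $H^s$ boundedly into $H^{s+3}$, in particular into $H^s$. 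Combining this with the hypothesis that $u\mapsto g(u)$ is locally Lipschitz from $H^s$ to $H^s$ with $g(0)=0$, the nonlinear map $B(u):=M^{-1}\partial_x\, g(u)$ is locally Lipschitz from $H^s$ into itself, satisfying $\|B(u)-B(v)\|_s\leq C\|g(u)-g(v)\|_s\leq C\,L_R\,\|u-v\|_s$ for $u,v$ in a ball of radius $R$, where $L_R$ is the local Lipschitz constant of $g$ and $C$ bounds $M^{-1}\partial_x$ on $H^s$.

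With these in hand the contraction estimate is routine. On $X^s=C(0,T;H^s)$ consider $(\Phi u)(t)=S(t)u_0-\int_0^t S(t-\tau)B(u(\tau))\,d\tau$. Since $S$ is isometric, $\|S(t)u_0\|_s=\|u_0\|_s$ and the integral term is bounded in $X^s$-norm by $T\,\sup_{\tau}\|B(u(\tau))\|_s$. Taking the closed ball of radius $2\|u_0\|_s$ in $X^s$ and choosing $T>0$ small enough, depending only on $\|u_0\|_s$ and $L_R$, one checks that $\Phi$ maps this ball into itself and is a contraction there; Banach's fixed-point theorem then yields a unique $u\in X^s$ solving the integral equation on $[0,T]$, and uniqueness in all of $X^s$ follows by a Gronwall estimate on the difference of two solutions.

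I do not expect a serious obstacle: the decisive input is simply condition \eqref{linearw}, which makes $M$ invertible and the multipliers $M^{-1}\partial_x$ and $m(k,t)$ bounded. The only point requiring care is the claim that $S(t)$ is an isometry (rather than merely bounded), which hinges on $l(k)$ being real-valued; and even this is inessential, because the three-derivative gain of $M^{-1}\partial_x$ means there is no loss-of-derivatives to overcome, so the mere boundedness of $S(t)$ on $H^s$ — with at worst an exponential-in-$T$ factor — would already close the contraction. The regularity index $s\geq 0$ enters only through the assumption on $g$ and is otherwise unconstrained by the linear part.
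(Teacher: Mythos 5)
Your proposal is correct and follows essentially the same route as the paper: rewrite \eqref{GM1} as $u_t+\mathcal{L}u=-M^{-1}\partial_x g(u)$ using the invertibility of $M$ guaranteed by \eqref{linearw}, observe that the propagator with symbol $e^{-ikl(k)t}$ is a unitary group on $H^s$, and close a contraction in $C(0,T;H^s)$ via Duhamel and the local Lipschitz hypothesis on $g$. Your added observations (the $k^{-3}$ decay of the symbol of $M^{-1}\partial_x$ and the remark that mere boundedness of $S(t)$ would suffice) are correct refinements of the same argument.
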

\begin{proof}
We write (\ref{GM1}) in the form
\begin{eqnarray}
\frac{du}{dt}+\mathcal{L}u=\mathcal{G}(u),\label{GM01}
\end{eqnarray}
where $\mathcal{L}$ is the linear operator with Fourier symbol $ikl(k), k\in\mathbb{R},$ and
\begin{eqnarray*}
\mathcal{G}(u)=-M^{-1}\partial_{x}g(u),
\end{eqnarray*}
where $M$ is given by 
\begin{eqnarray}
M=1+\alpha\partial_{x}^{2}+\beta\partial_{x}^{4},\label{GM01b}
\end{eqnarray} 
which, from (\ref{linearw}), is invertible. From Duhamel formula, the solution of (\ref{GM01}) can be written as
\begin{eqnarray*}
u=\mathcal{S}(t)u_{0}+\int_{0}^{t}\mathcal{S}(t-\tau)\mathcal{G}(u)d\tau,
\end{eqnarray*}
where $\mathcal{S}(t)$ denotes the group generated by $\mathcal{L}$. We consider the mapping $\widetilde{u}\mapsto u$ with
\begin{eqnarray}
u=\mathcal{S}(t)u_{0}+\int_{0}^{t}\mathcal{S}(t-\tau)\mathcal{G}(\widetilde{u})d\tau.\label{GM02}
\end{eqnarray}
Note that due (\ref{linearw}), $\mathcal{S}(t)$ is a unitary group on $H^{s}$. On the other hand, let $T, R>0$ and $\widetilde{u}_{1},\widetilde{u}_{2}$ be in a closed ball of radius $R$ centered at $0$ in $X^{s}$. Since $g$ is locally Lipschitz and from the arguments used in e.~g. \cite{BonaChS2004}, we may find $C=C(R)>0$ such that
\begin{eqnarray}
\|\mathcal{G}(\widetilde{u}_{1})(\tau)-\mathcal{G}(\widetilde{u}_{2})(\tau)\|_{X^{s}}\leq C\|\widetilde{u}_{1}(\tau)-\widetilde{u}_{2}(\tau)\|_{X^{s}},\label{GM03}
\end{eqnarray}
for any $0\leq \tau\leq T$. Then, (\ref{GM03}) and the hypothesis $g(0)=0$ imply the existence of some small enough $T>0$ such that (\ref{GM02}) is a contraction of the closed ball into itself. The result follows from the application of the Contraction Mapping Theorem.
\end{proof}
%\section{Solitary waves}

%In this section, we first  establish the theoretical results on the existence of solitary waves for the equation \eqref{GM1}
%and subsequently generate solitary waves numerically.
%\subsection{ Some theoretical existence results}
Some additional properties of (\ref{GM1}) with $g$ satisfying (P1) or (P2), can also be mentioned. Note first that in the case of (P2) and if $G'(u)=g(u)$, then (\ref{GM1}) admits
\begin{eqnarray*}
V(u)=\frac{1}{2}\int_{\mathbb{R}}(u^{2}-\alpha u_{x}^{2}+\beta u_{xx}^{2})dx,\label{GM12}
\end{eqnarray*}
as an invariant in time quantity by smooth enough solutions $u$ which decay, along with its derivatives in space up to second order, to zero as $|x|\rightarrow\infty$. Furthermore, if (\ref{linearw}) holds, then
%then the operator
%%\begin{eqnarray*}
%%M=1+\alpha\partial_{x}^{2}+\beta\partial_{x}^{4},
%%\end{eqnarray*} 
%is invertible on suitable Sobolev spaces and 
(\ref{GM1}) admits a Hamiltonian structure
\begin{eqnarray*}
u_{t}=\mathcal{J}\delta H(u),
\end{eqnarray*}
with $\mathcal{J}=-\partial_{x}M^{-1}$, $M$ as in (\ref{GM01b}), and the Hamiltonian function given by
\begin{eqnarray*}
H(u)=\int_{\mathbb{R}}\left(\frac{1}{2}\left(\epsilon u^{2}-\eta u_{x}^{2}+\gamma u_{xx}^{2}\right)+G(u)\right)dx.\label{GM13}
\end{eqnarray*}
The present paper is focused on the existence of solitary wave solutions.
Solitary wave solutions of the equation \eqref{GM1} are smooth functions $u$ of the form $u(x,t)=\varphi(X)$ where $X=x-c_s t$, $c_s \neq 0$ and where $\varphi$ satisfies
\begin{equation}
(\gamma -\beta c_s)  \varphi^{\prime\prime\prime\prime\prime}+ (\eta -\alpha c_s)  \varphi^{\prime\prime\prime}  +(\epsilon-c_s) \varphi^{\prime} +
 [ g(\varphi)]^{\prime} =0.  \label{solitary}
\end{equation}
Here $^\prime$ denotes the derivative with respect to $X$. Integrating the equation \eqref{solitary} and assuming that $\varphi$ and derivatives vanish as $|X|\rightarrow\infty$ then 
\begin{equation}
(\gamma -\beta c_s)  \varphi^{\prime\prime\prime\prime} + (\eta -\alpha c_s) \varphi^{\prime\prime}  +(\epsilon-c_s) \varphi +  g(\varphi) =0.  \label{GM14}
\end{equation}
Assuming that $\gamma-\beta c_{s}\neq 0$, then (\ref{GM14}) can be alternatively written as
\begin{eqnarray}
\varphi^{\prime\prime\prime\prime} -b\varphi^{\prime\prime}  +a \varphi =  f(\varphi),  \label{GM14b}
\end{eqnarray}
where $f=g/(\beta c_{s}-\gamma)$ and
\begin{eqnarray}
a=a(c_{s})=\frac{c_{s}-\epsilon}{\beta c_{s}-\gamma},\quad
b=b(c_{s})=\frac{\alpha c_{s}-\eta}{\gamma-\beta c_{s}}.\label{GM14c}
\end{eqnarray}
%Our main goal is to make a general study of existence of solutions of (\ref{GM14}) and their properties, theoretically and numerically, that can be applied to different, relevant particular cases of (\ref{GM1}). The analysis will have two parts:
%\begin{itemize}
%\item Theoretical results of existence using two classical theories: Normal Form Theory (NFT), \cite{p}, and Concentration-Compactness (CC) Theory, \cite{Lions}.
%\item Numerical generation of the solitray-wave profiles, with a description of the codes for the approximation and the derivation of properties of the waves from the numerical experiments, mainly focused on the asymptitic decay and the speed-amplitude relation.
%\end{itemize}
%
%[Highlights, Structure of the paper, and notation]
In what follows we will make a general study of existence of solutions of (\ref{GM14}). Under the assumption that $\varphi\rightarrow 0$ as $|X|\rightarrow\infty$, the corresponding solution will be called Classical Solitary Wave (CSW). However, we may consider other solutions of  (\ref{GM14}), with a different asymptotic behaviour, and our study will also consider two types of them: Generalized Solitary Waves (GSW), \cite{Lombardi1,Lombardi2,Lombardi3}, and Periodic Traveling Waves (PTW), \cite{Angulo}.

\section{Existence of solitary wave solutions via Normal Form Theory}
\label{sec3}
%\subsection{Existence via Normal Form Theory}
%Note that if $g$ satisfies (P1) then (\ref{GM14b}) is reversible under the transformation
%\begin{eqnarray}
%&&t\mapsto -t\nonumber\\
%&&(\varphi,\varphi',\varphi'',\varphi''')^{T}\mapsto (\varphi,-\varphi',\varphi'',-\varphi''')^{T},\label{GM14d}
%\end{eqnarray}
One of the approaches, considered here, to study the existence of the solutions for \eqref{GM14b}, \cite{Champ, ChampS, ChampT, IK}, is based on the application of the 
Normal Form Theory to the equivalent first-order differential system for 
 $U=(U_1,U_2,U_3,U_4)^T:=(\varphi,\varphi^{\prime}, \varphi^{\prime\prime}, \varphi^{\prime\prime\prime})$, given by
\begin{equation}\label{GM21a}
U^{\prime}=V(U,c_s):=L(c_s)U+R(U,c_s),
\end{equation}
where
\begin{equation}\label{GM21b}
L(c_s)=
\begin{pmatrix}
0 & 1 & 0 & 0  \\
0 & 0 & 1 & 0 \\
0 & 0 & 0 & 1 \\
\displaystyle\frac{\epsilon-c_s}{\beta c_s -\gamma} & 0 &\displaystyle\frac{\alpha c_s -\eta}{\gamma-\beta c_s} & 0 \\
\end{pmatrix},
\hspace*{20pt}
R(U,c_s)=
\begin{pmatrix}
0 \\
0 \\
0 \\
\displaystyle\frac{g(U)}{\beta c_s- \gamma} 
\end{pmatrix},
\end{equation}
The system \eqref{GM21a}, \eqref{GM21b} admits the solution $U=0$ if $g(0)=0$. We also assume that $g'(U)=0$ so that $\partial_{U}R(0)=0$.
Furthermore, note that if $g$ satisfies (P1) then (\ref{GM14b}) is reversible under the transformation
\begin{eqnarray}
&&t\mapsto -t\nonumber\\
&&(\varphi,\varphi',\varphi'',\varphi''')^{T}\mapsto (\varphi,-\varphi',\varphi'',-\varphi''')^{T},\label{GM14d}
\end{eqnarray}
Then, 
for $g$ satisfying (P1) or (P2), the vector field $V$ is reversible, in the sense that for all $U,c_s$
\begin{equation}
  SV(U,c_s)=-V(SU,c_s),\quad S={\rm diag}(1,-1,1,-1).\label{rever}
\end{equation}
%\begin{remark}
%In addition, \cite{Champ}, if $g$ has only odd-order terms in its Taylor series then (\ref{GM14b}) is additionally reversible  under the convolution of the previous transformation with 
%$$
%(\varphi,\varphi',\varphi'',\varphi''')^{T}\mapsto (-\varphi,\varphi',-\varphi'',\varphi''')^{T}.
%$$
%\end{remark}

In order to study the homoclinic solutions of (\ref{GM14b}), we first consider the linearization of \eqref{GM21a}, \eqref{GM21b} 
at the equilibrium point $U=0$ yields to the linear system $U'=LU$. The characteristic equation is indeed
\begin{equation}\label{GM22}
 \lambda^4 - b \lambda^2 + a=0,
\end{equation}
where $a=a(c_{s}), b=b(c_{s})$ are given by (\ref{GM14c}).
The distribution of the roots of \eqref{GM22} in the
$(b,a)$-plane, given in Figure \ref{GMfig1}, reproduces the arguments of \cite{Champ}. 
\begin{figure}[htbp]
\centering
{\includegraphics[width=1\textwidth]{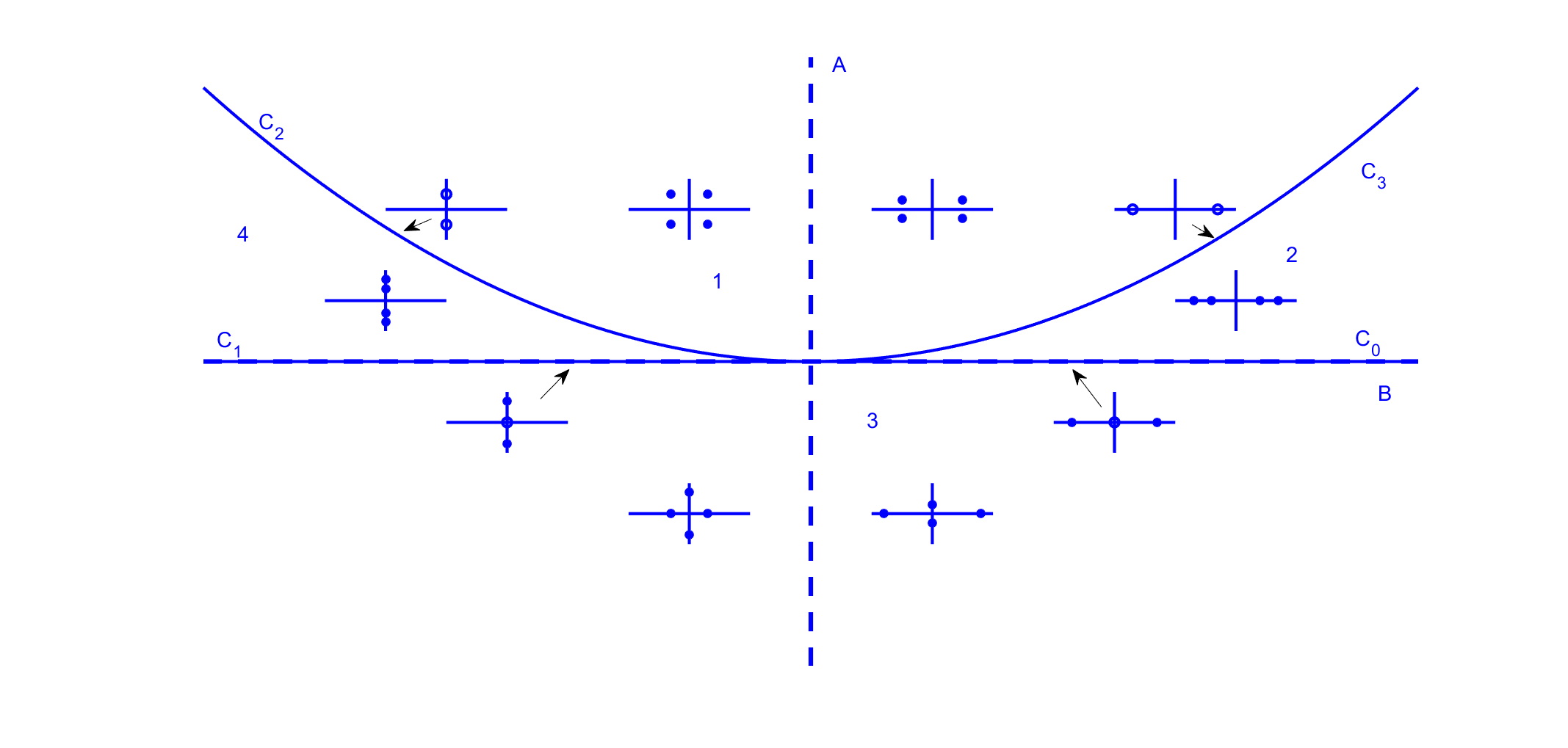}}
\caption{Linearization at the origin of (\ref{GM21a}) (as in Figure 1 of \cite{Champ}): Regions $1$ to $4$ in the $(b,a)$-plane, delimited by the bifurcation curves $\mathbb{C}_{0}$ to $\mathbb{C}_{3}$, and schematic representation of the position in the complex plane of the roots of (\ref{GM22}) for each curve and region. (Dot: simple root, larger dot: double root.)}
\label{GMfig1}
\end{figure}
Thus the linearized  dynamics
is determined  by the four regions $1$ to $4$, separated by the bifurcation curves
\begin{eqnarray*}
% \nonumber % Remove numbering (before each equation)
  \mathcal{C}_{0}(c_{s}) &=& \{(b,a) | a=0, b>0 \}=\{c_{s}=\epsilon, \frac{\alpha c_{s}-\eta}{\gamma-\beta c_{s}}>0\},   \\
  \mathcal{C}_{1}(c_{s}) &=& \{(b,a) | a=0, b<0 \}=\{c_{s}=\epsilon, \frac{\alpha c_{s}-\eta}{\gamma-\beta c_{s}}<0\},\\
  \mathcal{C}_{2}(c_{s}) &=& \{(b,a) | a>0, b=-2\sqrt{a} \}\\
&=&\{\frac{c_{s}-\epsilon}{\beta c_{s}-\gamma}>0, \frac{\alpha c_{s}-\eta}{\gamma-\beta c_{s}}
=-2\sqrt{\frac{c_{s}-\epsilon}{\beta c_{s}-\gamma}}\}, \\
  \mathcal{C}_{3}(c_{s}) &=&\{(b,a) | a>0, b=2\sqrt{a} \}\\
&=&\{\frac{c_{s}-\epsilon}{\beta c_{s}-\gamma}>0, \frac{\alpha c_{s}-\eta}{\gamma-\beta c_{s}}=2\sqrt{\frac{c_{s}-\epsilon}{\beta c_{s}-\gamma}}\}.
\end{eqnarray*}
In terms of the spectrum of $L(c_{s})$, $\mathbb{C}_{0}$ is characterized by the presence of two zero eigenvalues and other two, nonzero and real; in $\mathbb{C}_{1}$, there are two zero eigenvalues and other two, imaginary; in $\mathbb{C}_{2}$, the spectrum consists of a double complex conjugate pair of imaginary eigenvalues $\pm i\sqrt{b/2}$, while in $\mathbb{C}_{3}$ we have two double real eigenvalues $\pm\sqrt{b/2}$. The four curves meet at the origin $(b,a)=(0,0)$, where $L(c_{s})$ has only the zero eigenvalue with geometric multiplicity equals one, cf. Figure \ref{GMfig1}.

The theory of normal forms consists of finding a change of variables to transform, locally near the equilibrium, the system into a simpler one (the normal form) up to some fixed order and from which the dynamics of the original is better analyzed. The local transformation is polynomial and can be computed from the resolution of a sequence of linear problems. The dynamics of the resulting normal form is determined by the linear part. The theory, first introduced by Poincar\'e and Birkhoff, was developed by Arnold, \cite{Arnold}, and has many applications, see e.~g. \cite{HaragusI2011} and references therein. Its particular use to determine the existence of solitary wave solutions is also widely considered in the literature (cf. e.~g. \cite{Champ,ChampS,ChampT,DDS1,IK} and references therein). On the other hand, the search for solitary waves as homoclinic trajectories sometimes  requires the introduction of bifurcation parameters in order to analyze the emergence of structures via the bifurcation theory and the application of the corresponding version of the normal form theorem. In addition, of particular relevance is the influence of symmetries and reversible transformations, which are inherited by the normal form, see \cite{HaragusI2011} for details.

One of the applications of the normal form concerns the analysis of the systems obtained from a center manifold reduction. For the case at hand, we will consider the approach developed in \cite{Iooss95}, based on determining a normal form of the reversible vector field given by \eqref{GM21a}, \eqref{GM21b} in a general way and then discussing its principal part near the bifurcation curves $\mathcal{C}_{j}, j=0,1,2,$ with specific bifurcation parameter $\mu$ for each curve. The case of $\mathcal{C}_{3}$ will be explored computationally will the help of the references, cf. \cite{Champ,ChampT,Deveney1976,Belyakov}. The conclusions will be illustrated by their application to several types of Rosenau equations.

The corresponding normal form theorem (cf. e.~g. \cite{HaragusI2011}) establishes, in the case of \eqref{GM21a}, \eqref{GM21b} and for any positive integer $m\geq 2$, the existence of neighborhoods $V_{1}$ of $U=0$ and $V_{2}$ of $\mu=0$ such that for any $\mu\in V_{2}$ there is a polynomial $\Phi(\mu,\cdot):\mathbb{R}^{4}\rightarrow\mathbb{R}^{4}$ of degree $m$ satisfying:
\begin{itemize}
\item[(i)] The coefficients of $\Phi$ are smooth functions of $\mu$ and
\begin{eqnarray*}
\Phi(0,0)=0,\quad \partial_{U}\Phi(0,0)=0.
\end{eqnarray*}
\item[(ii)] For $V\in V_{1}$, the change of variable
\begin{eqnarray*}
U=V+\Phi(\mu,V),\label{GM25}
\end{eqnarray*}
transforms (\ref{GM21a}) into the normal form
\begin{eqnarray*}
V'=LV+N(\mu,V)+\rho(\mu,V),\label{GM26}
\end{eqnarray*}
where
\begin{itemize}
\item[(a)] For any $\mu\in V_{2}$, $N(\mu,\cdot):\mathbb{R}^{4}\rightarrow\mathbb{R}^{4}$ is a polynomial of degree $m$, with coefficients which are smooth functions of $\mu$ and $N(0,0)=\partial_{V}N(0,0)=0$.
\item[(b)] The equality
\begin{eqnarray*}
N(\mu,e^{tL^{*}}V)=e^{tL^{*}}N(\mu,V),
\end{eqnarray*}
holds for all $(t,V)\in \mathbb{R}\times \mathbb{R}^{4}$ and $\mu\in V_{2}$, and where $L^{*}$ denotes the adjoint of $L$. This is equivalent, \cite{HaragusI2011}, to the condition
\begin{eqnarray*}
\partial_{V}N(\mu,V)L^{*}V=L^{*}N(\mu,V),\quad V\in\mathbb{R}^{4}, \mu\in V_{2}.
\end{eqnarray*}
\item[(c)] $\rho$ is smooth in $(\mu,V)\in V_{2}\times V_{1}$ and
\begin{eqnarray*}
\rho(\mu,V)=o\left(||V||^{m}\right),
\end{eqnarray*}
for all $\mu\in V_{2}$ and where $||\cdot ||$ denotes the Euclidean norm in $\mathbb{R}^{4}$.
\item[(d)] The polynomials $\Phi(\mu,\cdot), N(\mu,\cdot)$ satisfy, for all $\mu\in V_{2}$
\begin{eqnarray*}
S\Phi(\mu,V)=\Phi(\mu,SV),\quad SN(\mu,V)=-N(\mu,SV),\quad V\in\mathbb{R}^{4},
\end{eqnarray*}
where $S$ is given in (\ref{rever}).
\end{itemize}
\end{itemize}
We note that (\ref{GM21a}), (\ref{GM21b}) can alternatively be written as
\begin{equation}\label{GM27a}
U^{\prime}=L_{0}U+\widetilde{R}(U,c_s),
\end{equation}
where $\widetilde{R}(U,c_s)=R_{11}(U,c_{s})+R(U,c_{s})$,
\begin{equation}\label{GM27b}
L_{0}=
\begin{pmatrix}
0 & 1 & 0 & 0  \\
0 & 0 & 1 & 0 \\
0 & 0 & 0 & 1 \\
0 & 0 &0 & 0 \\
\end{pmatrix},
\hspace*{20pt}
R_{11}(U,c_s)=
\begin{pmatrix}
0 \\
0 \\
0\\
-a(c_{s})U_{1}+b(c_{s})U_{3} 
\end{pmatrix},
\end{equation}
and observe that the approach made in \cite{Iooss95} can be used here to compute a general expression of a reversible normal form for (\ref{GM27a}), (\ref{GM27b}). We notice that the approach in \cite{Iooss95} initially assumes a regular, nonlinear function $\widetilde{R}$ at least quadratic near the origin. However, the treatment of the linear term $R_{11}$ in (\ref{GM27b}) (which can be considered quadratic as $a$ and $b$ will involve some bifurcation parameters) can be included in the construction of the normal form from the arguments used in \cite{Choudhury2007}, based on the preservation of the eigenvalue structure. All this leads to a normal form for (\ref{GM27a}) as, \cite{Iooss95}
\begin{eqnarray}
U'&=&L_{0}U+P_{4}(U_{1},V_{2},V_{4})\begin{pmatrix}0\\0\\0\\1\end{pmatrix}+
P_{2}(U_{1},V_{2},V_{4})\begin{pmatrix}0\\U_{1}\\U_{2}\\U_{3}\end{pmatrix}\nonumber\\
&&+P_{0}(V_{2},V_{4})\begin{pmatrix}0\\V_{2}\\W_{2}\\X_{2}\end{pmatrix}+
P_{1}(U_{1},V_{2},V_{4})\begin{pmatrix}V_{3}\\W_{3}\\X_{3}\\Y_{3}\end{pmatrix}\nonumber\\
&&+P_{3}(U_{1},V_{2},V_{4})\begin{pmatrix}0\\0\\V_{3}\\W_{3}\end{pmatrix},\label{GM28}
\end{eqnarray}
where
\begin{eqnarray*}
&&V_{2}=U_{2}^{2}-2U_{1}U_{3},\quad V_{3}=U_{2}^{3}-3U_{1}U_{2}U_{3}+3U_{1}^{2}U_{4},\\
&&V_{4}=3U_{2}^{2}U_{3}^{2}-6U_{2}^{3}U_{4}-8U_{1}U_{3}^{3}+18U_{1}U_{2}U_{3}U_{4}-9U_{1}^{2}U_{4}^{2},\\
&&W_{2}=-3U_{1}U_{4}+U_{2}U_{3},\quad W_{3}=3U_{1}U_{2}U_{4}-4U_{1}U_{3}^{2}+U_{2}^{2}U_{3},\\
&&X_{2}=-3U_{2}U_{4}+2U_{3}^{2},\quad X_{3}=-3U_{1}U_{3}U_{4}+3U_{2}^{2}U_{1}-U_{2}U_{3}^{2},\\
&&Y_{3}=3U_{2}U_{3}U_{4}-\frac{4}{3}U_{3}^{3}-3U_{1}U_{4}^{2},
\end{eqnarray*}
and $P_{j}, j=0,1,2,3,4$, are polynomials in their arguments with
\begin{eqnarray*}
P_{4}(U_{1},V_{2},V_{4})&=&\mu_{2}U_{1}+\nu_{1}U_{1}^{2}+\nu_{2}V_{2}+O(||U||^{3}),\\
P_{2}(U_{1},V_{2},V_{4})&=&\mu_{1}+\nu_{3}U_{1}+(||U||^{2}),\\
P_{0}(V_{2},V_{4})&=&\nu_{4}+O(||U||^{2}),
\end{eqnarray*}
where $\nu_{j}, j=1,2,3,4,$ are constants and
\begin{eqnarray}
\mu_{1}=\frac{b}{3},\quad \mu_{2}=\left(\frac{b}{3}\right)^{2}-a=\mu_{1}^{2}-a,\label{GM29}
\end{eqnarray}
with $a,b$ given by (\ref{GM14c}). In order to apply here the discussion, established in \cite{Iooss95}, on the normal form near each bifurcation curve $\mathcal{C}_{j}, j=0,1,2,$ it is also convenient to write the bifurcation curves in the $(\mu_{1},\mu_{2})$ plane, as
\begin{eqnarray*}
  \mathcal{C}_{0} &=& \{(\mu_{1},\mu_{2}) | \mu_{1}>0, \mu_{2}=\mu_{1}^{2}\},   \\
  \mathcal{C}_{1} &=& \{(\mu_{1},\mu_{2}) | \mu_{1}<0, \mu_{2}=\mu_{1}^{2}\},\\
  \mathcal{C}_{2} &=&  \{(\mu_{1},\mu_{2}) | \mu_{1}<0, \mu_{2}=-\frac{5}{4}\mu_{1}^{2}\}, \\
  \mathcal{C}_{3}&=& \{(\mu_{1},\mu_{2}) | \mu_{1}>0, \mu_{2}=-\frac{5}{4}\mu_{1}^{2}\}.
\end{eqnarray*}
(Cf. Figure 1 of \cite{Iooss95}.)
%\begin{figure}[htbp]
%\centering
%{\includegraphics[width=1\textwidth]{champ_fig2.eps}}
%\caption{Linearization at the origin of (\ref{GM21a}): Regions $1$ to $4$ in the $(\mu_{1},\mu_{2})$-plane, delimited by the bifurcation curves $\mathbb{C}_{0}$ to $\mathbb{C}_{3}$, and schematic representation of the position in the complex plane of the roots of (\ref{GM22}) for each curve and region, cf. Figure \ref{GMfig1}. (Dot: simple root, larger dot: double root.)}
%\label{GMfig2}
%\end{figure}

Note that the linearization of (\ref{GM28}) at the origin leads to the system
\begin{eqnarray*}
U'=L(\mu_{1},\mu_{2})U,
\end{eqnarray*}
where
\begin{equation}\label{GM29e}
L(\mu_{1},\mu_{2})=
\begin{pmatrix}
0 & 1 & 0 & 0  \\
\mu_{1} & 0 & 1 & 0 \\
0 & \mu_{1} & 0 & 1 \\
\mu_{2} & 0 &\mu_{1} & 0 \\
\end{pmatrix},
\end{equation}
which, using (\ref{GM29}), has the same characteristic polynomial as $L$ in (\ref{GM21b}), preserving then the eigenvalue structure, cf. \cite{Choudhury2007}.

Following the discussion in \cite{Iooss95}, the principal part of the normal form (\ref{GM28}) will be described near the curves $\mathcal{C}_{j}, j=0,1,2$, in order to show the existence of different homoclinic structures leading to different types of solutions of (\ref{GM14b}). The results will be specified for typical choices of the nonlinear term $g$ and illustrated numerically.

\subsection{Near $\mathcal{C}_{0}$}
We consider $\mu=-a$ as bifurcation parameter, in such a way that $\mu_{2}=\mu_{1}^{2}+\mu$, $\mathcal{C}_{0}$ is characterized by the conditions $\mu=0, \mu_{1}>0$, and the points near $\mathcal{C}_{0}$ depend on the (small) values of $\mu$. If $L$ denotes the matrix (\ref{GM29e}) at $\mu=0$, then a basis of the generalized eigenspace ${\rm Ker}L^{2}={\rm span}(\xi_{0},\xi_{1})$ with $L\xi_{1}=\xi_{0}$ is given explicitly by, \cite{Iooss95,Choudhury2007}
\begin{eqnarray*}
\xi_{0}=(1,0,-\mu_{1},0)^{T},\quad \xi_{1}=(0,1,0,-2\mu_{1})^{T},
\end{eqnarray*}
with, in addition, $S\xi_{0}=\xi_{0}, S\xi_{1}=-\xi_{1}$. Using the Center Manifold Theorem, \cite{Wiggins90,HaragusI2011}, the dynamics of bounded solutions near $\mathcal{C}_{0}$ can be studied on the two-dimensional center manifold. From the change of variables
\begin{eqnarray*}
U=A\xi_{0}+B\xi_{1}+\Phi(\mu,A,B),
\end{eqnarray*}
with suitable $\Phi$, the principal (linear and quadratic) part of the normal form (\ref{GM28}) takes the form, \cite{Iooss95,HaragusI2011}
\begin{eqnarray}
\frac{dA}{dX}&=&B,\nonumber\\
\frac{dB}{dX}&=&-\frac{1}{3\mu_{1}}\left(\mu A+(\nu_{1}+2\mu_{1}(\nu_{2}-\nu_{3})A^{2}\right).\label{GM210a}
\end{eqnarray}
(The variables $A,B,X$ can be rescaled in order to reformulate (\ref{GM210a}) as a nonsingular system as $\mu_{1}\rightarrow 0$, \cite{Iooss95}.) When $g$ in (\ref{GM21b}) is quadratic in the components of $U$ then (\ref{GM210a}) can be reduced to, \cite{Choudhury2007}
\begin{eqnarray*}
\frac{dA}{dX}&=&B,\nonumber\\
\frac{dB}{dX}&=&-\frac{\mu}{3\mu_{1}}A-\frac{2}{3}A^{2},\label{GM210b}
\end{eqnarray*}
which admits a homoclinic to zero solution, in the form of a Classical Solitary Wave 
\begin{eqnarray*}
A(X)=\frac{9a}{4b}{\rm sech}^{2}\left(\frac{1}{2}\sqrt{\frac{a}{b}}X\right),
\end{eqnarray*}
when $\mu<0$ (that is $a>0$, region 2 in Figure \ref{GMfig1}). The persistence of the homoclinic solution in the original system (\ref{GM21a}), which corresponds to a CSW solution of (\ref{GM1}) can be proved following \cite{IK,Champ}. These conclusions are applied to different families of equations of Rosenau-type in Tables \ref{GMtav1}-\ref{GMtav4}, where the range of speed $c_{s}>0$ ensuring the existence of CSW's is specified in each case when $g$ is quadratic in the components of $U$. (The existence of CSW solutions of the Rosenau equation will be considered in section \ref{sec4}, as well as the generalized case $g(u)=\frac{u^{p+1}}{p+1}, p\geq 1$.) The justification of Tables \ref{GMtav1}-\ref{GMtav4} is in Appendix \ref{GMapp1}, where the curves $\mathcal{C}_{j}, j=0,1,2,3,$ and the regions of Figure \ref{GMfig1} are described for each equation of Rosenau type considered in this paper. The generation of some of the CSW profiles are illustrated in Figure \ref{GMfig3}. (All the figures are generated from the experiments performed using the Petviashvili method, \cite{Petv1976}; the numerical procedure is described in Appendix \ref{GMapp2}.)

Two properties of the profiles can be observed from Figure \ref{GMfig3}. Note first that the wave is smooth, even, and decreases (if it is of elevation) or increases (if it is of depression) fast in both directions away from its maximum (or minimum) point at $X=0$. Furthermore, the corresponding phase portrait and the way how the homoclinic orbit approaches zero at $\pm\infty$ suggest that the profile goes to zero exponentially as $|X|\rightarrow\infty$. This behaviour can be theoretically checked from the application of some standard results (cf. e.~g. \cite{BonaLi}) in the case of nonlinear terms of the form $g(u)=\frac{u^{p+1}}{p+1}, p\geq 1$.

\begin{figure}[htbp]
\centering
\subfigure[]
{\includegraphics[width=6.27cm,height=5cm]{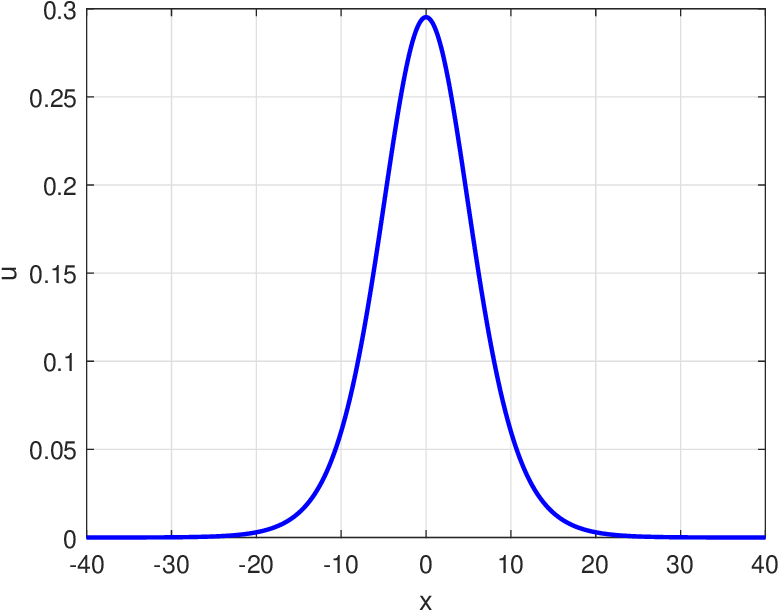}}
\subfigure[]
{\includegraphics[width=6.27cm,height=5cm]{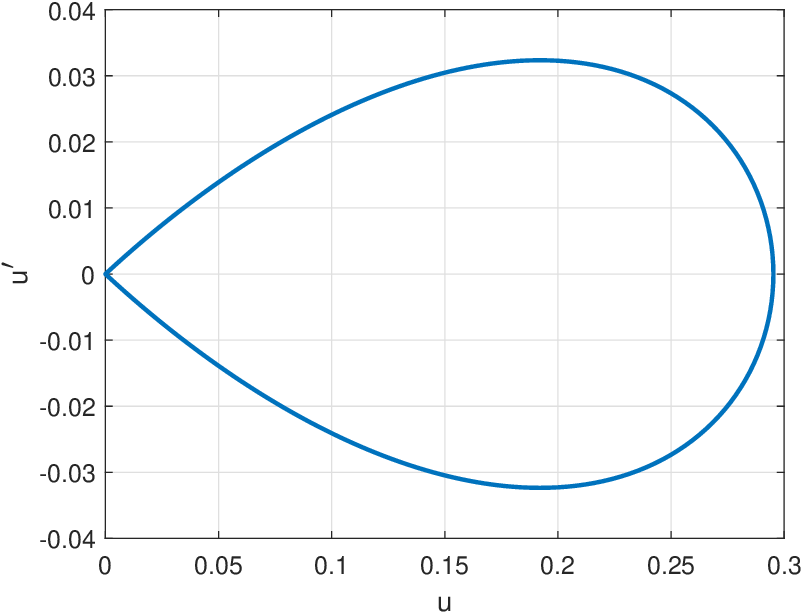}}
\subfigure[]
{\includegraphics[width=6.27cm,height=5cm]{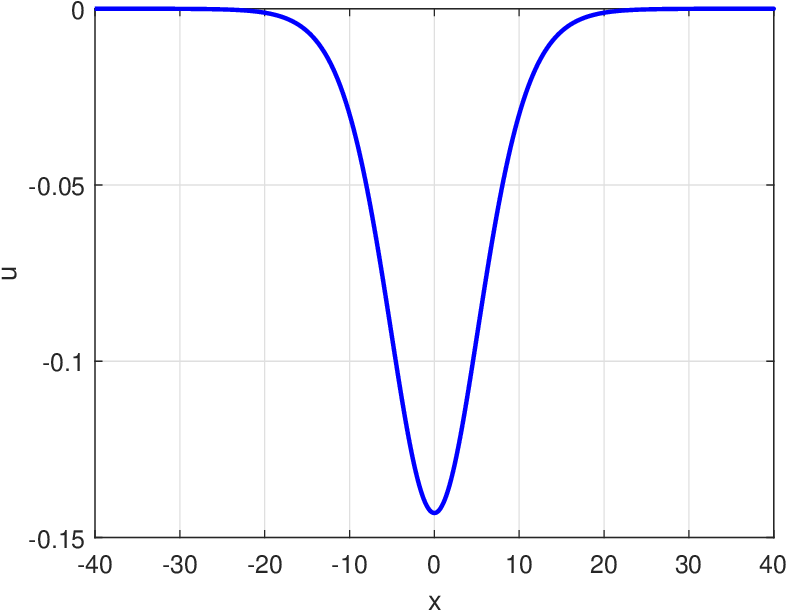}}
\subfigure[]
{\includegraphics[width=6.27cm,height=5cm]{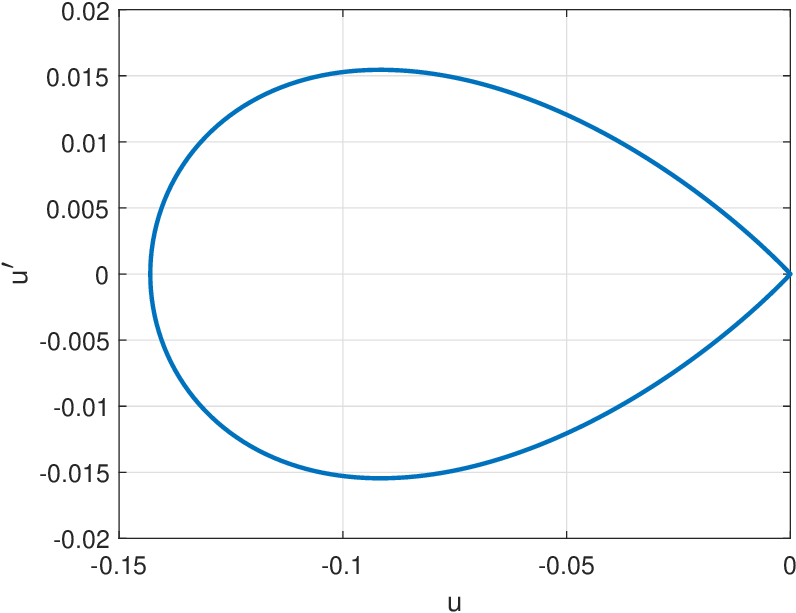}}
\subfigure[]
{\includegraphics[width=6.27cm,height=5cm]{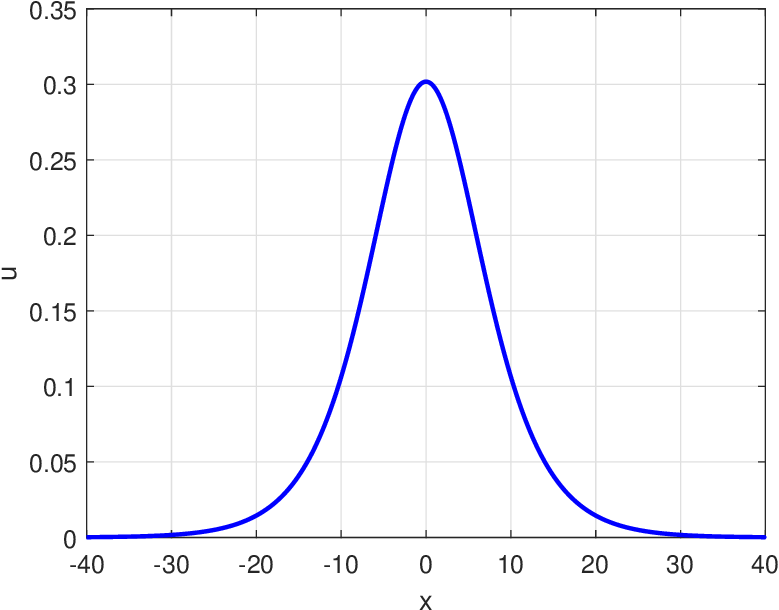}}
\subfigure[]
{\includegraphics[width=6.27cm,height=5cm]{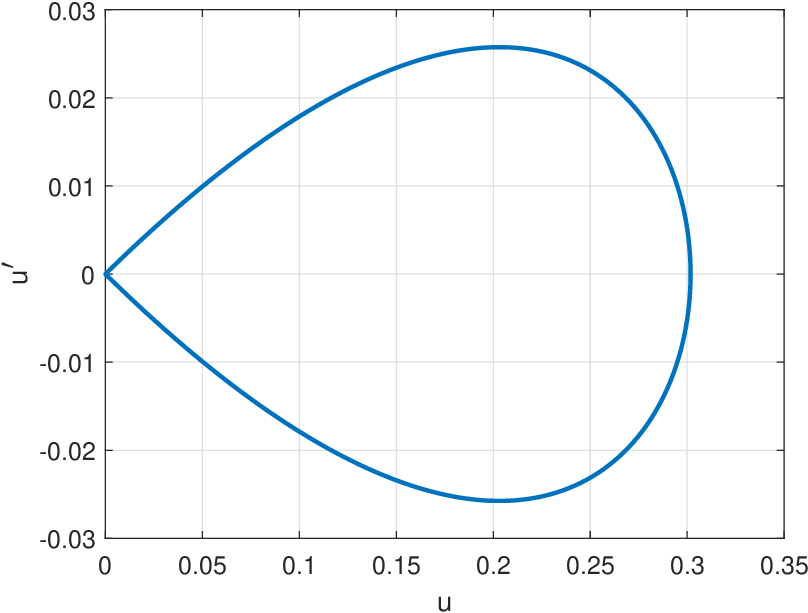}}
%\subfigure[]
%{\includegraphics[width=6.27cm]{case_a6_c3.eps}}
\caption{CSW generation, $u$ profiles and phase portraits. (a), (b) Rosenau-RLW equation with $\alpha=-1, \epsilon=\beta=1, g(u)=u^{2}/2; c_{s}=1.1$; (c), (d) Rosenau-Kawahara equation with $\eta=-1/2, \epsilon=\beta=1, \gamma=2, g(u)=u^{2}/2, c_{s}=y_{-}+\epsilon+0.01\approx 0.951$; (e), (f) Rosenau-RLW-Kawahara equation with $\alpha=\gamma=-1, \epsilon=\beta=1, \eta=1, g(u)=u^{2}/2, c_{s}=z_{+}+\epsilon-0.565\approx 1.102$. The values of $y_{-}$ and $z_{+}$ are given in Tables \ref{GMtav3} and \ref{GMtav4} resp.}
\label{GMfig3}
\end{figure}

\begin{remark}
When $\mu>0$ is small (region 3 of Figure \ref{GMfig1}, close to $\mathcal{C}_{0}$) NFT predicts the existence of periodic solutions of the reduced system, as in \cite{IK}. They correspond to periodic traveling wave (PTW) solutions of (\ref{GM1}): The description of the region for several families of Rosenau equations is given in Appendix \ref{GMapp1} and some approximate PTW are shown in Figure \ref{GMfig4}.
\end{remark}

\begin{figure}[htbp]
\centering
\subfigure[]
{\includegraphics[width=6.27cm,height=5cm]{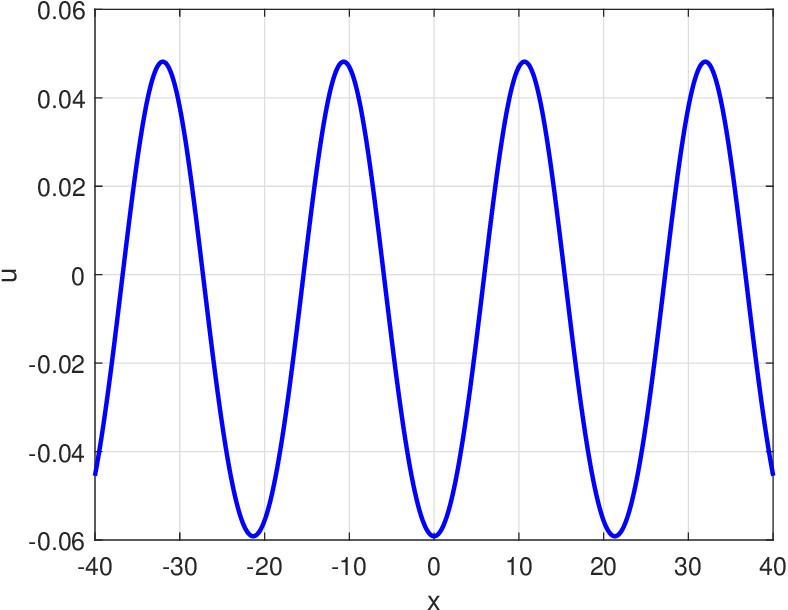}}
\subfigure[]
{\includegraphics[width=6.27cm,height=5cm]{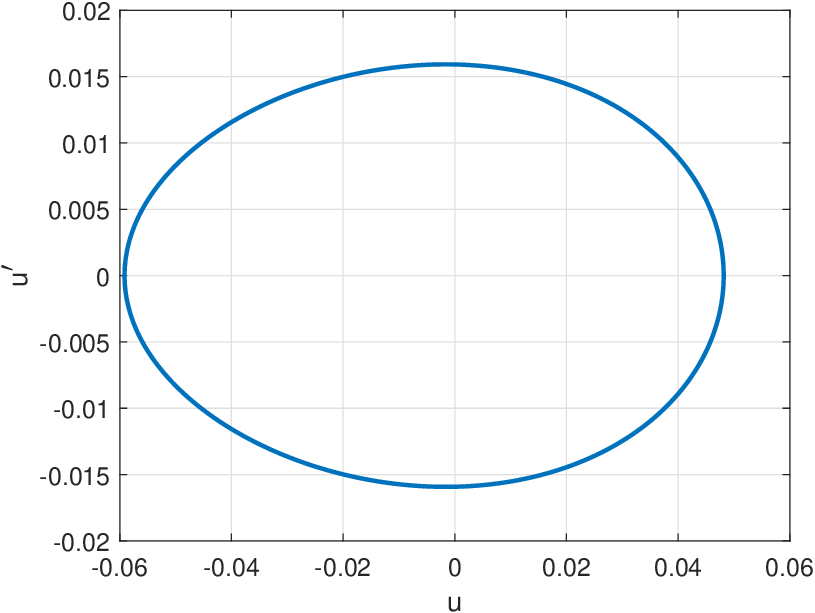}}
\subfigure[]
{\includegraphics[width=6.27cm,height=5cm]{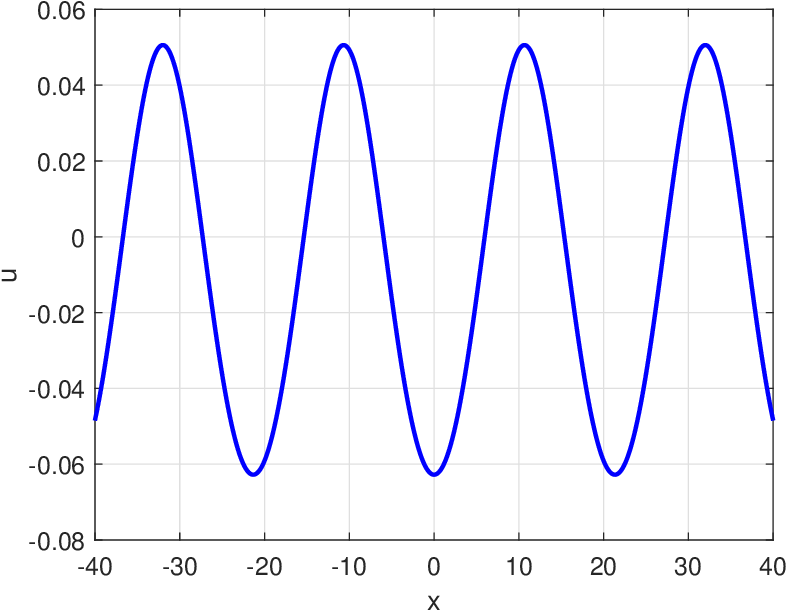}}
\subfigure[]
{\includegraphics[width=6.27cm,height=5cm]{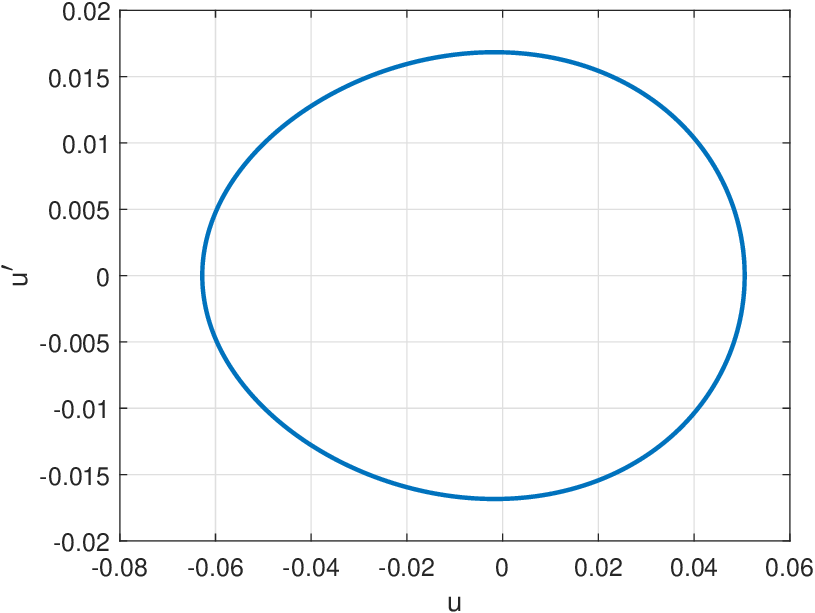}}
\subfigure[]
{\includegraphics[width=6.27cm,height=5cm]{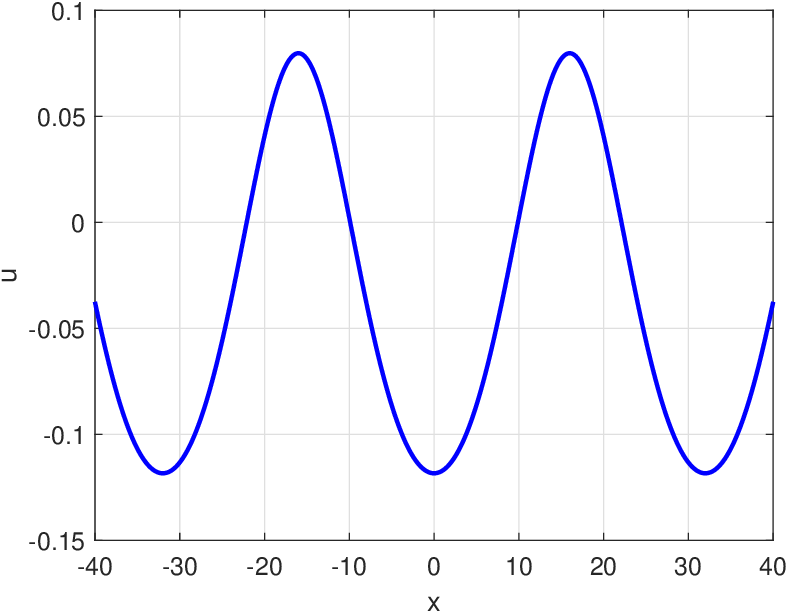}}
\subfigure[]
{\includegraphics[width=6.27cm,height=5cm]{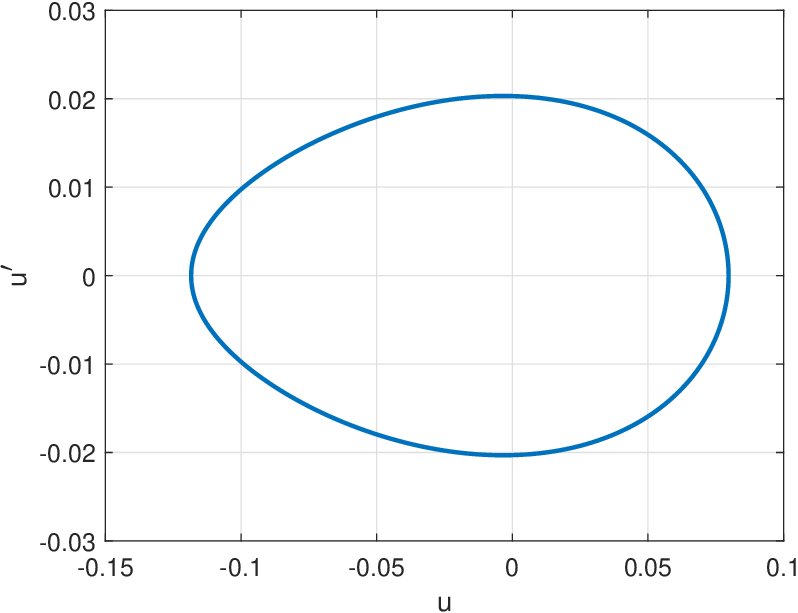}}
%\subfigure[]
%{\includegraphics[width=6.27cm]{case_a6_c3.eps}}
\caption{PTW generation, $u$ profiles and phase portraits. (a), (b) Rosenau-KdV equation with $\eta=1, \epsilon=\beta=1, g(u)=u^{2}/2; c_{s}=0.9$; (c), (d) Rosenau-Kawahara equation with $\eta=1, \epsilon=1, \beta=2, \gamma=1, g(u)=u^{2}/2, c_{s}=0.9$; (e), (f) Rosenau-RLW-Kawahara equation with $\alpha=\gamma=-1, \epsilon=\beta=1, \eta=1, g(u)=u^{2}/2, c_{s}=0.9$.}
\label{GMfig4}
\end{figure}

\subsection{Near $\mathcal{C}_{1}$}
We consider the same parameters defined in the previous section. In $\mathcal{C}_{1}$ (where $\mu_{1}<0$) the spectrum of $L$ consists of two imaginary simple eigenvalues $\pm i\sqrt{-3\mu_{1}}$ and $0$, which is double. A basis of the generalized eigenspace may consist of the vectors $\xi_{0}, \xi_{1}$, defined above, along with $\zeta_{0},\overline{\zeta_{0}}$, where
\begin{eqnarray*}
\zeta_{0}=(1, i\sqrt{-3\mu_{1}}, 2\mu_{1}, i\mu_{1}\sqrt{-3\mu_{1}})^{T}.
\end{eqnarray*}
For $\mu>0$, the double eigenvalue splits into two real eigenvalues (region 3 of Figure \ref{GMfig1}), \cite{Iooss95}. The change of variables
\begin{eqnarray*}
U=A\xi_{0}+B\xi_{1}+C\zeta_{0}+\overline{C}\overline{\zeta_{0}}+\Phi(\mu,A,B,C,\overline{C}),
\end{eqnarray*}
for some polynomial $\Phi$ of degree at least two, can be chosen to write (\ref{GM28}) in a normal form, \cite{IK}
\begin{eqnarray}
\frac{dA}{dX}&=&B,\nonumber\\
\frac{dB}{dX}&=&-\frac{1}{3\mu_{1}}\left(\mu A+(\nu_{1}+2\mu_{1}(\nu_{2}-\nu_{3})A^{2}+\nu_{5}|C|^{2}\right)+\cdots,\nonumber\\
\frac{dC}{dX}&=&i\sqrt{-3\mu_{1}}C\left(1+\frac{\mu}{18\mu_{1}^{2}}+\nu_{6}A+\cdots\right),\label{GM211}
\end{eqnarray}
where
\begin{eqnarray*}
\nu_{5}&=&2\nu_{1}+2\mu_{1}(\nu_{3}-7\nu_{2})+12\mu_{1}^{2}\nu_{4},\\
\nu_{6}&=&\frac{1}{9\mu_{1}^{2}}(\nu_{1}+\mu_{1}(4\nu_{3}-\nu_{2})-12\mu_{1}^{2}\nu_{4}),
\end{eqnarray*}
(which, as before, can be reformulated in order to be nonsingular as $\mu_{1}\rightarrow 0$). For a quadratic nonlinearity $g$, (\ref{GM211}) has the form, \cite{Choudhury2007}
\begin{eqnarray*}
\frac{dA}{dX}&=&B,\nonumber\\
\frac{dB}{dX}&=&-\frac{\mu}{3\mu_{1}}A-\frac{2}{3}A^{2}-\frac{4}{3}|C|^{2},\nonumber\\
\frac{dC}{dX}&=&i\sqrt{-3\mu_{1}}C\left(1+\frac{\mu}{18\mu_{1}^{2}}-\frac{1}{9\mu_{1}}\right).\label{GM212}
\end{eqnarray*}
As mentioned in e.~g. \cite{IK, Choudhury2007,Lombardi1,Lombardi2,Lombardi3}, in region 3 close to $\mathcal{C}_{1}$, there are orbits which are homoclinic to periodic orbits as $|X|\rightarrow\infty$, leading to smooth Generalized Solitary Wave solutions of (\ref{GM1}). Some of the ripples may have exponentially small amplitude and on isolated curves the oscillations may vanish and an orbit homoclinic to zero (CSW, also called embedded soliton) is formed. For the families of Rosenau equations mentioned in the introduction, the range of speed and parameters ensuring the emergence of GSW's is specified in Tables \ref{GMtav1}-\ref{GMtav4} (cf. Appendix \ref{GMapp1}) and some of approximate profiles are shown in Figure \ref{GMfig5}. In addition, close to $\mathcal{C}_{1}$, but in region 4, \cite{IK}, PTW's are also formed, cf. Figure \ref{GMfig6}.

\begin{figure}[htbp]
\centering
\subfigure[]
{\includegraphics[width=6.27cm,height=5cm]{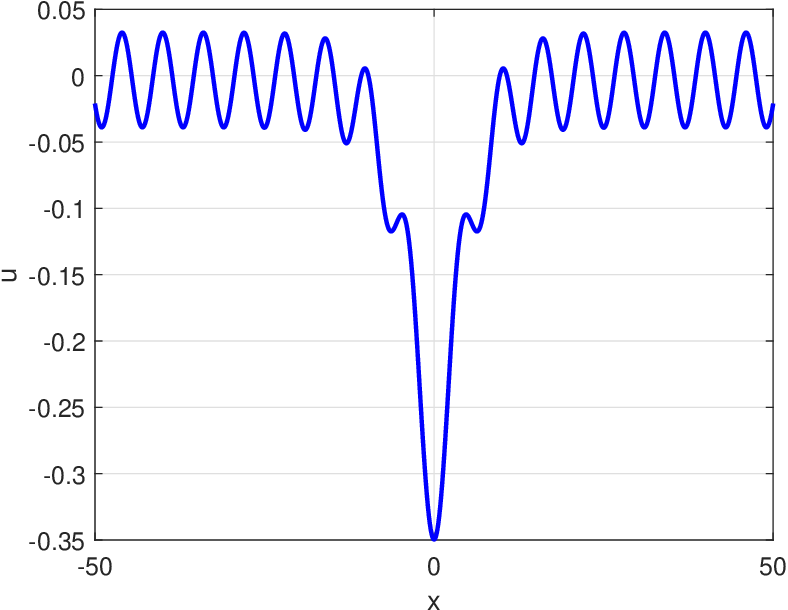}}
\subfigure[]
{\includegraphics[width=6.27cm,height=5cm]{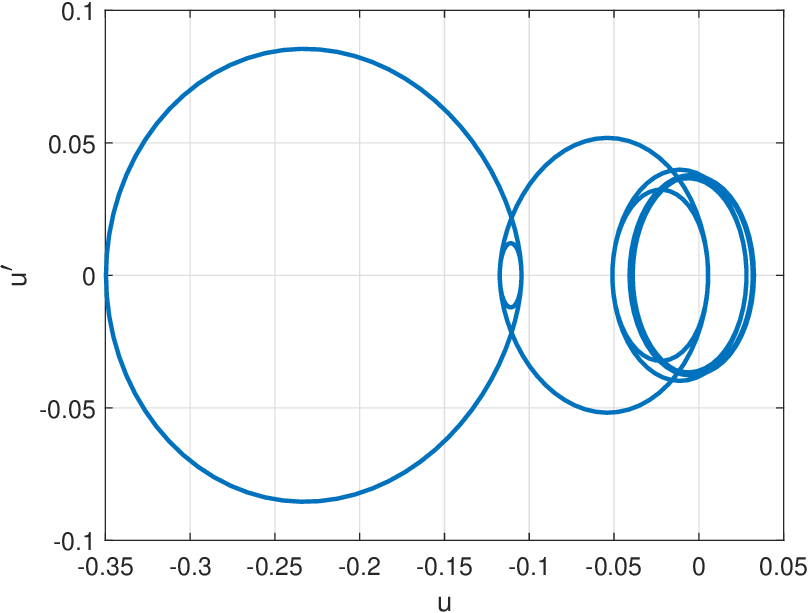}}
\subfigure[]
{\includegraphics[width=6.27cm,height=5cm]{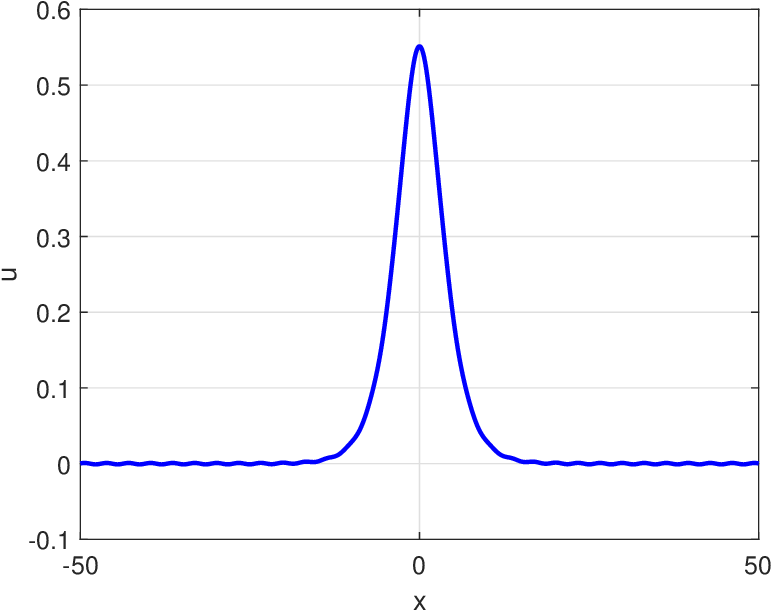}}
\subfigure[]
{\includegraphics[width=6.27cm,height=5cm]{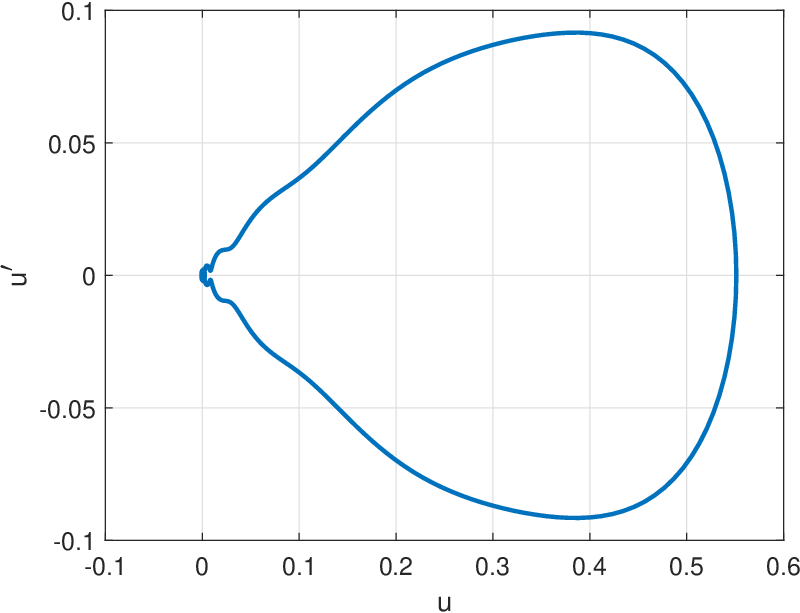}}
\subfigure[]
{\includegraphics[width=6.27cm,height=5cm]{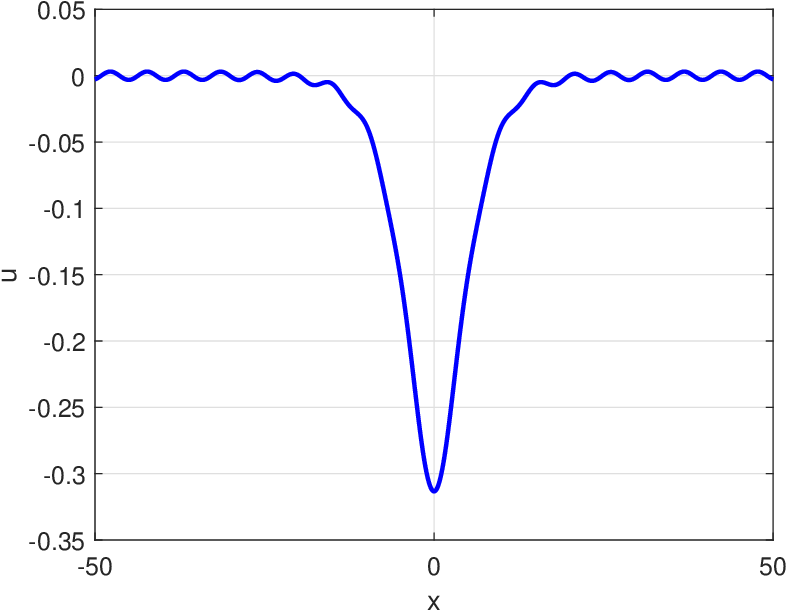}}
\subfigure[]
{\includegraphics[width=6.27cm,height=5cm]{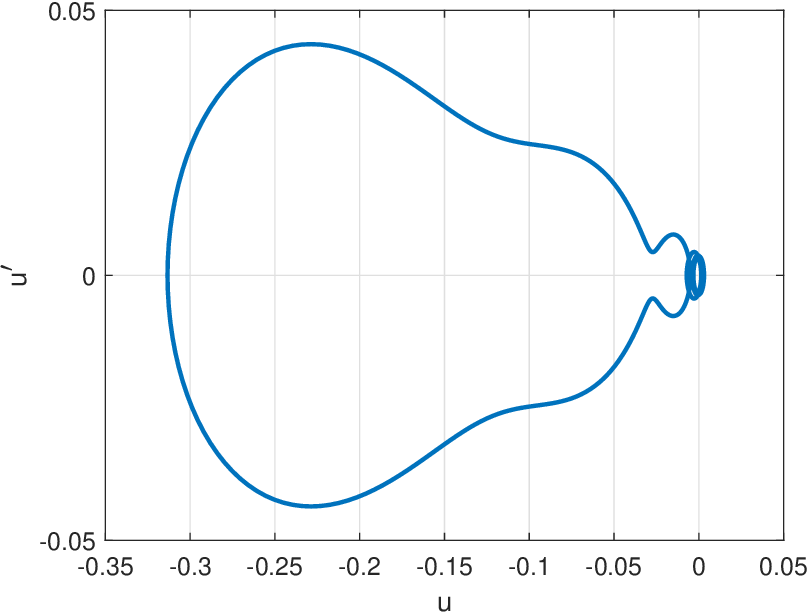}}
%\subfigure[]
%{\includegraphics[width=6.27cm]{case_a6_c3.eps}}
\caption{GSW generation, $u$ profiles and phase portraits. (a), (b) Rosenau-RLW equation with $\alpha=1, \epsilon=\beta=1, g(u)=u^{2}/2; c_{s}=0.9$; (c), (d) Rosenau-Kawahara equation with $\eta=1, \epsilon=0.25, \beta=4, \gamma=2, g(u)=u^{2}/2, c_{s}=0.43$; (e), (f) Rosenau-Kawahara equation with $\eta=-1, \epsilon=1, \beta=2, \gamma=1, g(u)=u^{2}/2, c_{s}=0.9$.}
\label{GMfig5}
\end{figure}

\begin{figure}[htbp]
\centering
\subfigure[]
{\includegraphics[width=6.27cm,height=5cm]{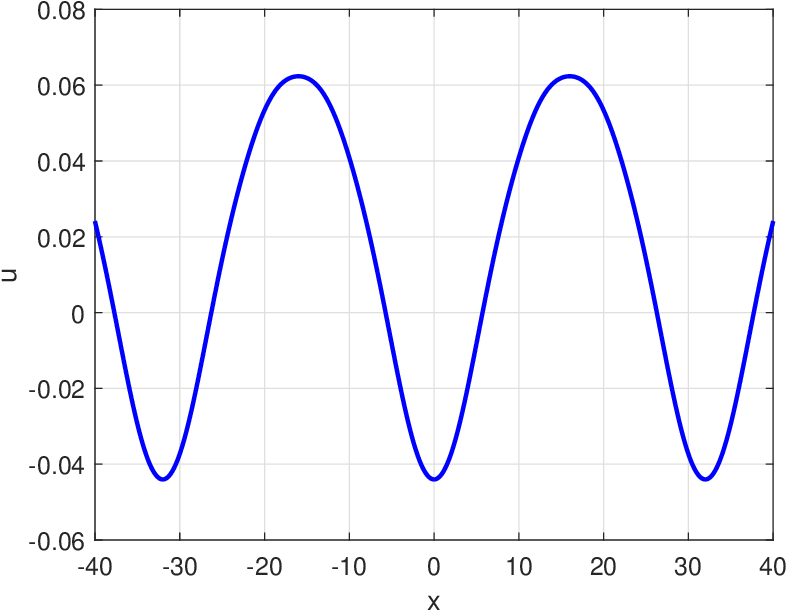}}
\subfigure[]
{\includegraphics[width=6.27cm,height=5cm]{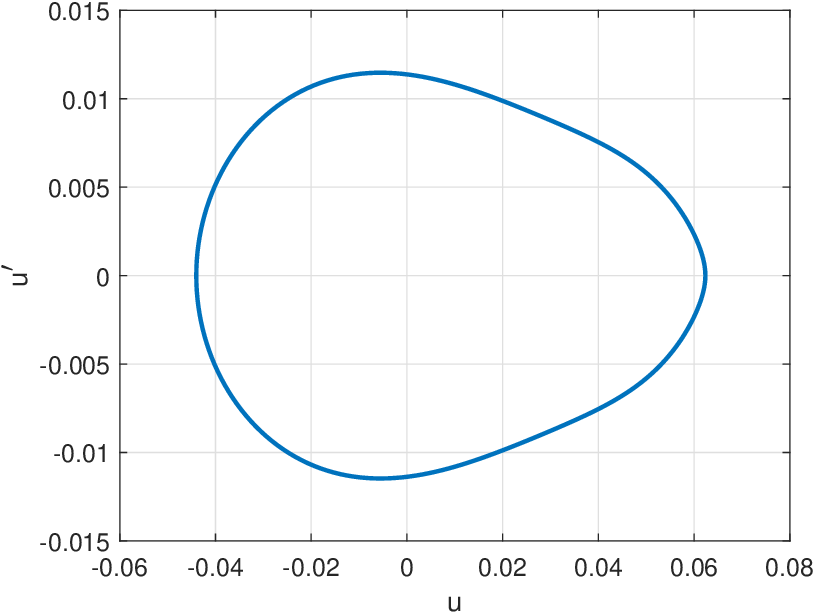}}
\subfigure[]
{\includegraphics[width=6.27cm,height=5cm]{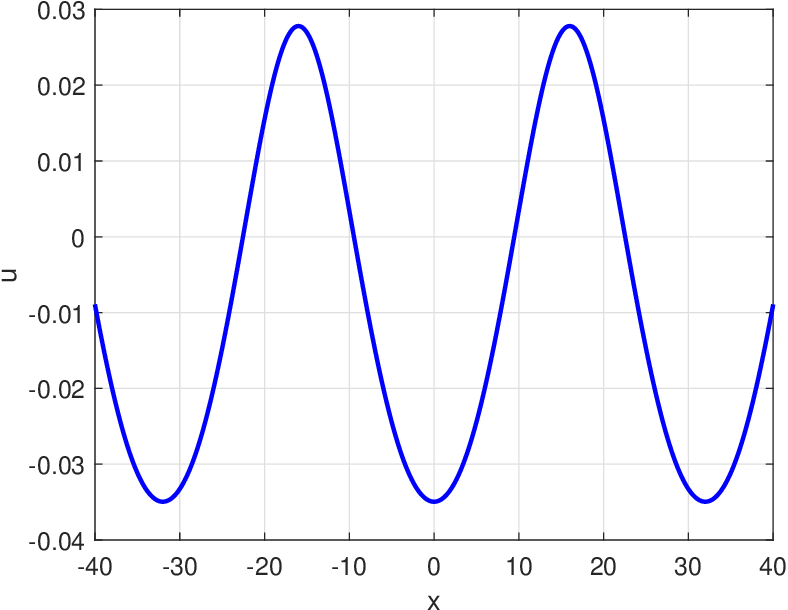}}
\subfigure[]
{\includegraphics[width=6.27cm,height=5cm]{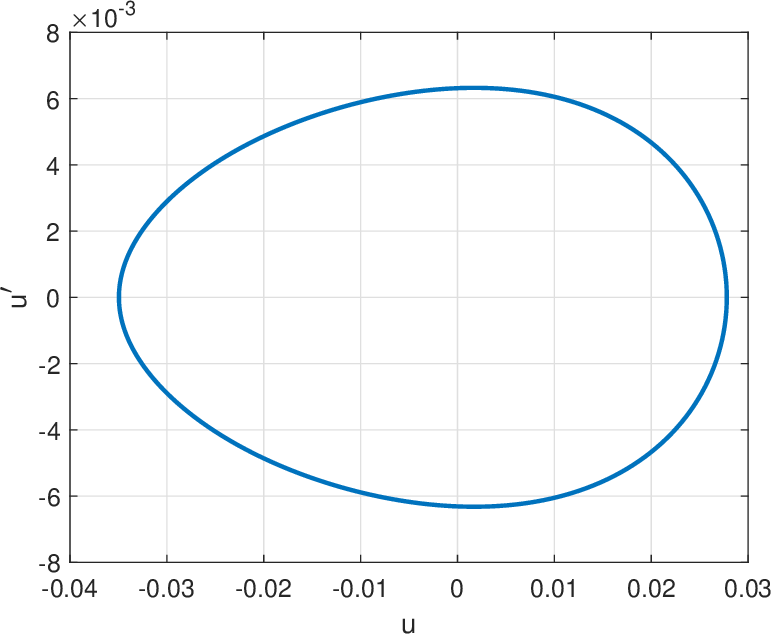}}
\subfigure[]
{\includegraphics[width=6.27cm,height=5cm]{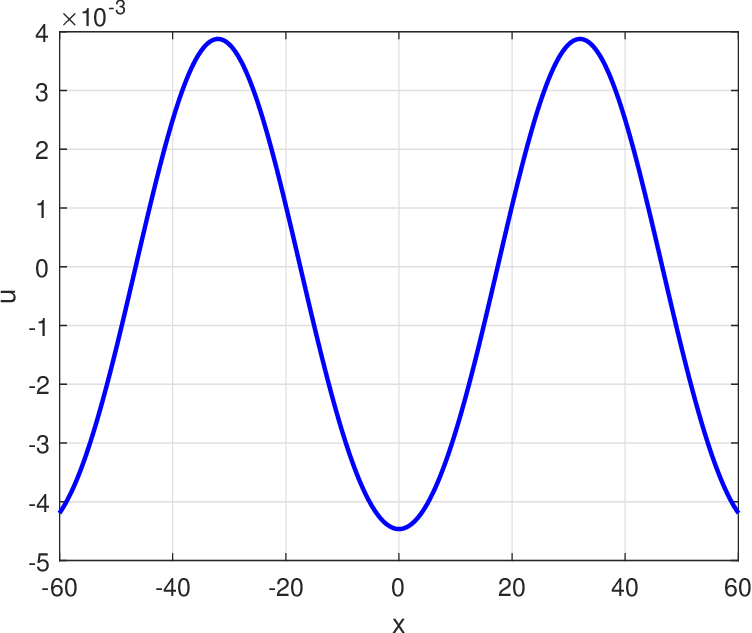}}
\subfigure[]
{\includegraphics[width=6.27cm,height=5cm]{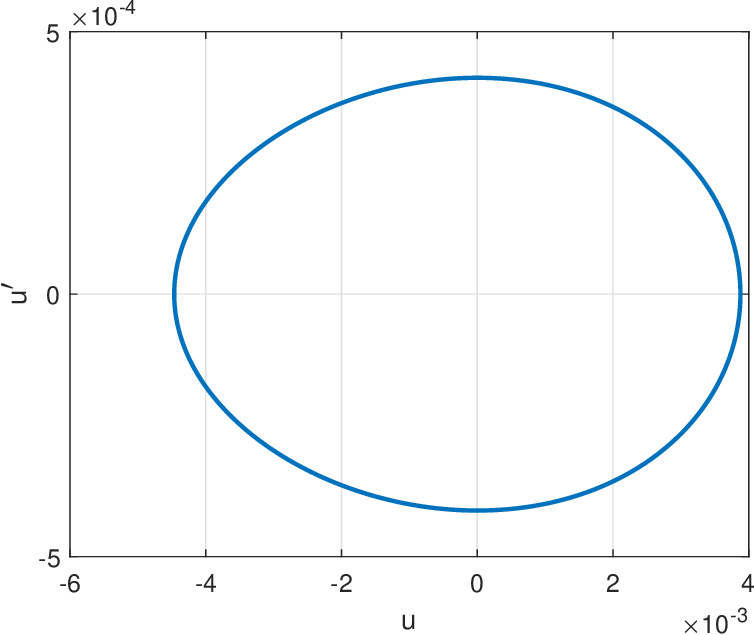}}
%\subfigure[]
%{\includegraphics[width=6.27cm]{case_a6_c3.eps}}
\caption{PTW generation, $u$ profiles and phase portraits. (a), (b) Rosenau-KdV equation with $\eta=-1, \epsilon=\beta=1, g(u)=u^{2}/2; c_{s}=1.05$; (c), (d) Rosenau-Kawahara equation with $\eta=1, \epsilon=0.25, \beta=4, \gamma=2, g(u)=u^{2}/2, c_{s}\approx 0.208$; (e), (f) Rosenau-Kawahara equation with $\alpha=0, \gamma=1, \epsilon=1/8, \beta=8, \eta=1, g(u)=u^{2}/2, c_{s}\approx 0.115$.}
\label{GMfig6}
\end{figure}

\subsection{Near $\mathcal{C}_{2}$}
In this case, according to the definition of $\mathcal{C}_{2}$ in the $(\mu_{1},\mu_{2})$ plane, we consider a bifurcation parameter $\mu$ such that
\begin{eqnarray*}
\mu_{2}=-\frac{5}{4}\mu_{1}^{2}-\mu,\quad \mu_{1}<0.
\end{eqnarray*}
Recall that the spectrum of $L$ in $\mathcal{C}_{2}$ consists of two double imaginary eigenvalues 
\begin{eqnarray*}
\lambda_{\pm}=\pm i\sqrt{\frac{-3}{2}\mu_{1}}.
\end{eqnarray*}
When $\mu>0$ is small (region 1 of Figure \ref{GMfig1}) they become two complex simple pairs of eigenvalues, symmetric with respect to the axis, while if $\mu<0$ (region 4) they become two pairs of imaginary eigenvalues which are simple. The generalized eigenspace when $\mu=0$ can be generated by the eigenvalues $\zeta_{0},\overline{\zeta_{0}}$ and the generalized eigenvectors $\zeta_{1}, \overline{\zeta_{1}}$, where, \cite{Iooss95}
\begin{eqnarray*}
\zeta_{0}=(1, \lambda_{+}, \frac{\mu_{1}}{2}, -\mu_{1}\frac{\lambda_{+}}{2})^{T},\quad
\zeta_{1}=(0,1,2\lambda_{+},\frac{5}{2}\mu_{1})^{T}.
\end{eqnarray*}
Following \cite{Iooss95,IoossA,IoossP,Champ} we can change the variable
\begin{eqnarray*}
U=A\zeta_{0}+B\zeta_{1}+\overline{A}\overline{\zeta_{0}}+\overline{B}\overline{\zeta_{1}}+\Phi(\mu,A,B,\overline{A},\overline{B}),
\end{eqnarray*}
in such a way that the following normal form for (\ref{GM28}) can be derived
\begin{eqnarray*}
\frac{dA}{dX}&=&\lambda_{+}A+B-\frac{i\mu A}{6\mu_{1}\sqrt{-6\mu_{1}}}+\cdots,\\
\frac{dB}{dX}&=&\lambda_{+}B-\frac{i\mu B}{6\mu_{1}\sqrt{-6\mu_{1}}}-\frac{\mu}{6\mu_{1}}A+\frac{76\nu_{1}^{2}}{243\mu_{1}^{3}}A|A|^{2}+\cdots
\end{eqnarray*}
After rescaling, the study made in \cite{IoossP} reveals the existence of homoclinic to zero orbits when $\mu>0$ is small (region 1 in Figure \ref{GMfig1}) corresponding to CSW but with nonmonotone decay; they are different from the ones obtained close to $\mathcal{C}_{0}$, (which are positive or negative) as well as the possible formation of orbits which are homoclinic to periodic orbits (corresponding to GSW's) and periodic orbits (corresponding to PTW's) when $\mu<0$ (region 4). The description given in Appendix \ref{GMapp1} gives the range of speeds and parameters corresponding to these two situations for each Rosenau-type equation considered in the present paper. The information corresponding to nonmonotone CSW's is displayed in Tables \ref{GMtav1}-\ref{GMtav4} and some  approximate profiles are shown in Figure \ref{GMfig7}.
\begin{figure}[htbp]
\centering
\subfigure[]
{\includegraphics[width=6.27cm,height=5cm]{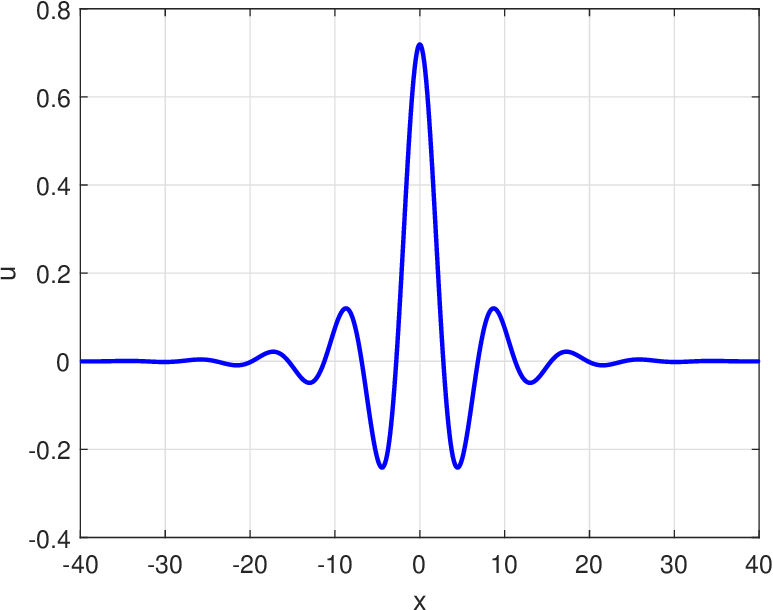}}
\subfigure[]
{\includegraphics[width=6.27cm,height=5cm]{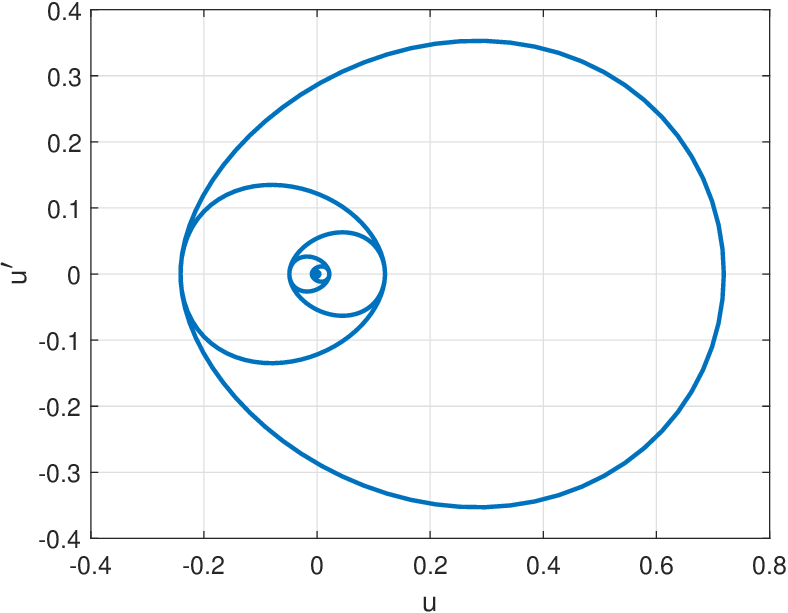}}
\subfigure[]
{\includegraphics[width=6.27cm,height=5cm]{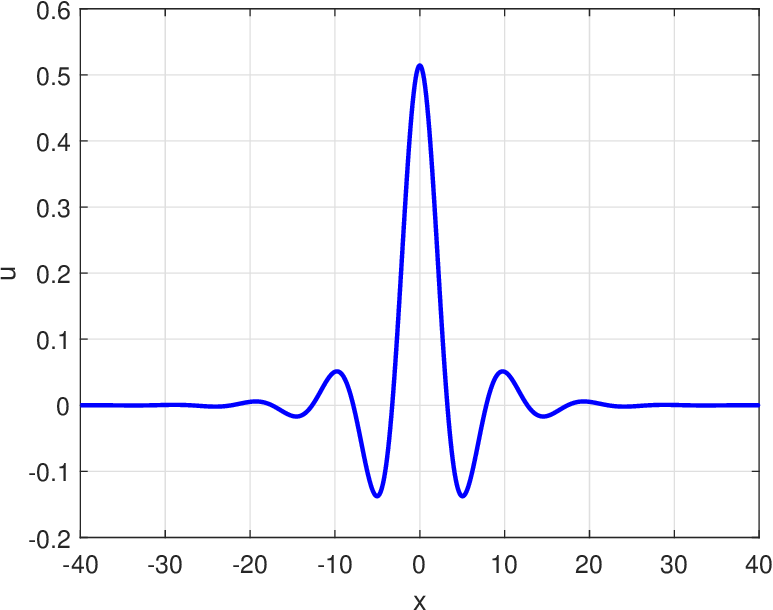}}
\subfigure[]
{\includegraphics[width=6.27cm,height=5cm]{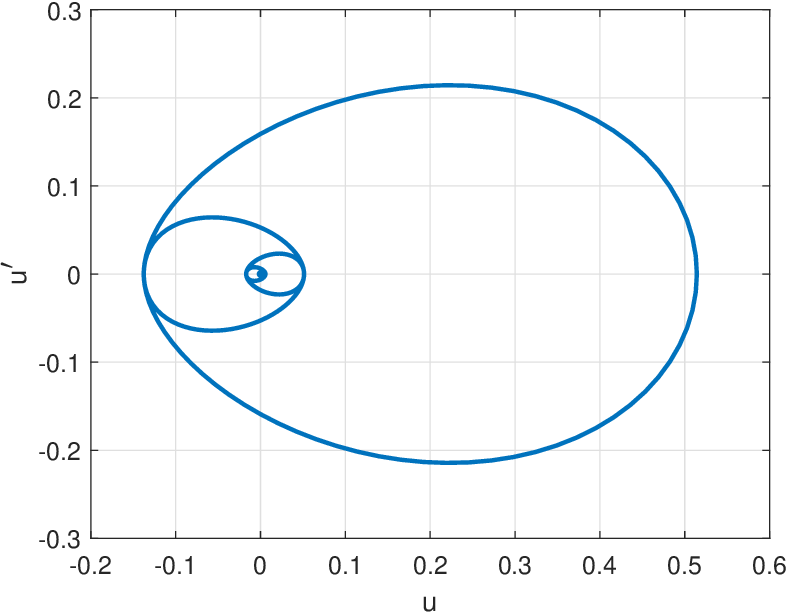}}
\subfigure[]
{\includegraphics[width=6.27cm,height=5cm]{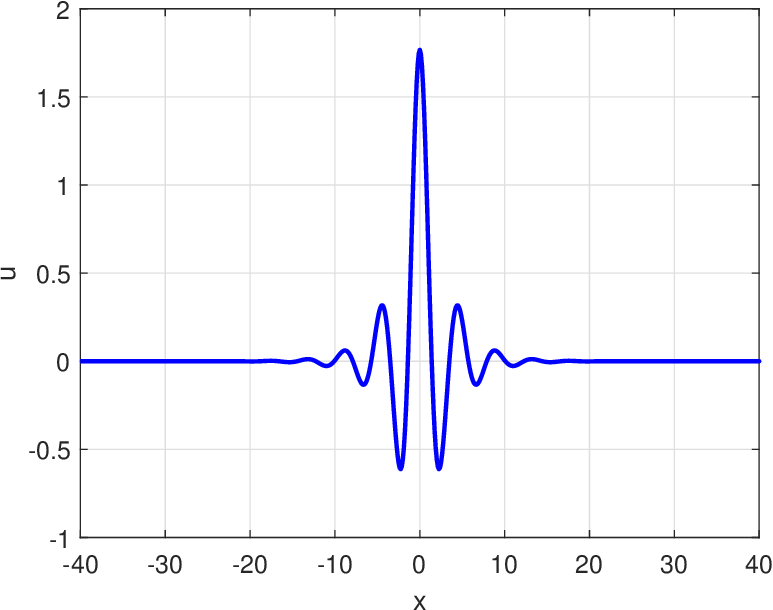}}
\subfigure[]
{\includegraphics[width=6.27cm,height=5cm]{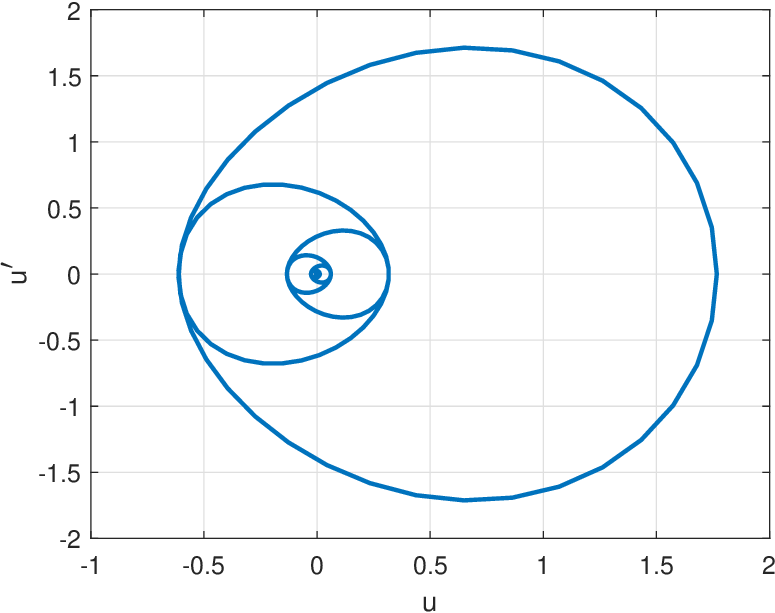}}
%\subfigure[]
%{\includegraphics[width=6.27cm]{case_a6_c3.eps}}
\caption{Generation of nonmonotone CSW's, $u$ profiles and phase portraits. (a), (b) Rosenau-RLW equation with $\alpha=1, \epsilon=\beta=1, g(u)=u^{2}/2; c_{s}=1.5$; (c), (d) Rosenau-KdV equation with $\eta=-1, \epsilon=\beta=1, g(u)=u^{2}/2, c_{s}\approx 1.3071$; (e), (f) Rosenau-Kawahara equation  with $\gamma=2, \epsilon=\beta=1, \eta=-1, g(u)=u^{2}/2, c_{s}\approx 2.2590$.}
\label{GMfig7}
\end{figure}
The form of both the approximate profiles and phase portraits reveals the nonmonotone behaviour of the waves and their oscillatory (exponential) asymptotic decay to zero as $|X|\rightarrow\infty$.

\subsection{Near $\mathcal{C}_{3}$}
In this case, the numerical experiments with $g(u)=u^{2}/2$ suggest that the bifurcation from region 2 to region 1 in Figure \ref{GMfig1} generates new homoclinic orbits in different ways: multimodal homoclinic orbits, \cite{ChampT}, CSW's and CSW's with nonmonotone decay, and PTW's, cf. \cite{DDS1,Deveney1976,Belyakov}. The bifurcation from CSW's to nonomonotone CSW's case is illustrated by some examples in Figures \ref{GMfig8} and \ref{GMfig9}.
\begin{figure}[htbp]
\centering
\subfigure[]
{\includegraphics[width=6.27cm,height=5cm]{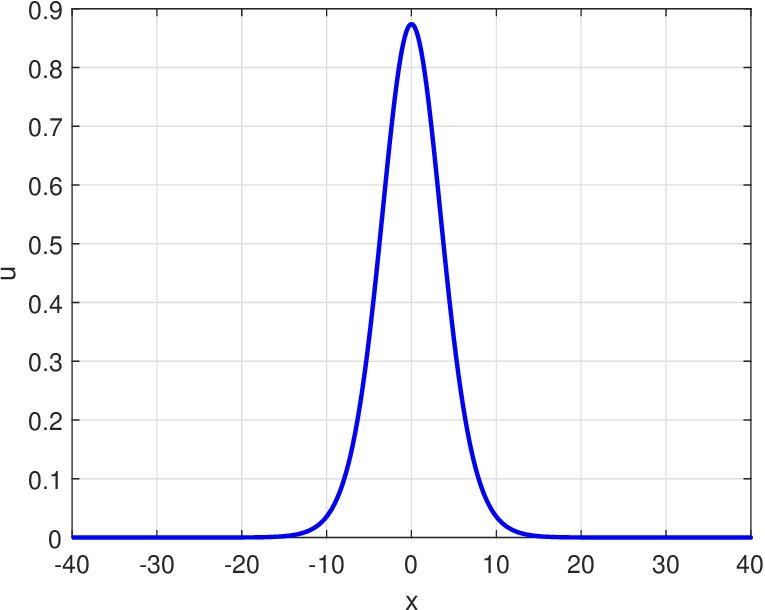}}
\subfigure[]
{\includegraphics[width=6.27cm,height=5cm]{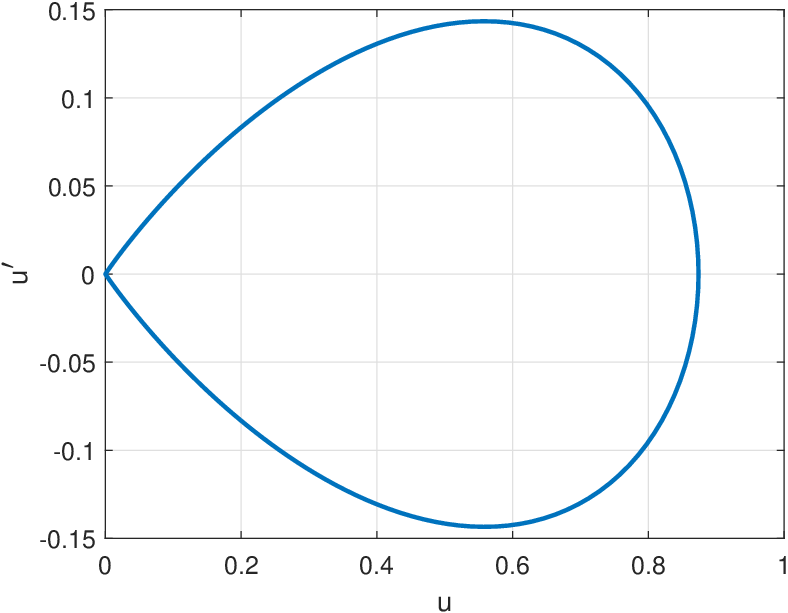}}
\subfigure[]
{\includegraphics[width=6.27cm,height=5cm]{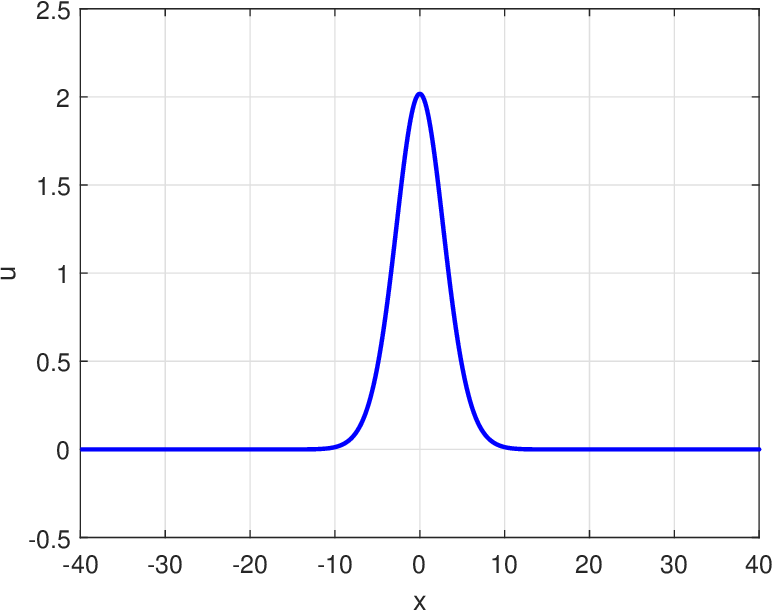}}
\subfigure[]
{\includegraphics[width=6.27cm,height=5cm]{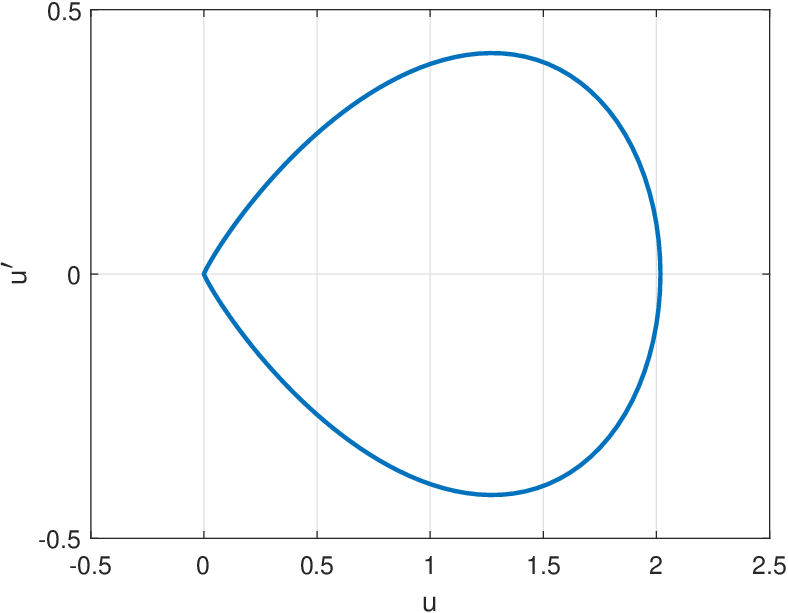}}
\subfigure[]
{\includegraphics[width=6.27cm,height=5cm]{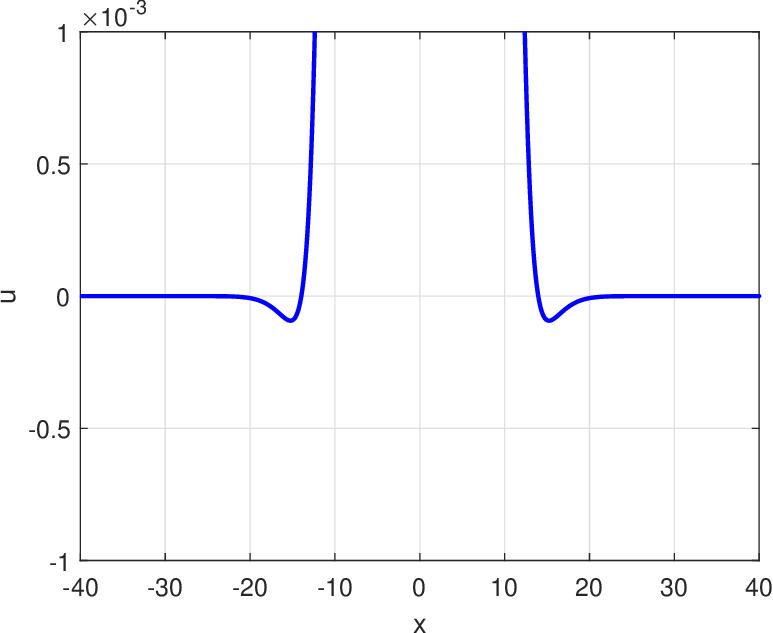}}
\subfigure[]
{\includegraphics[width=6.27cm,height=5cm]{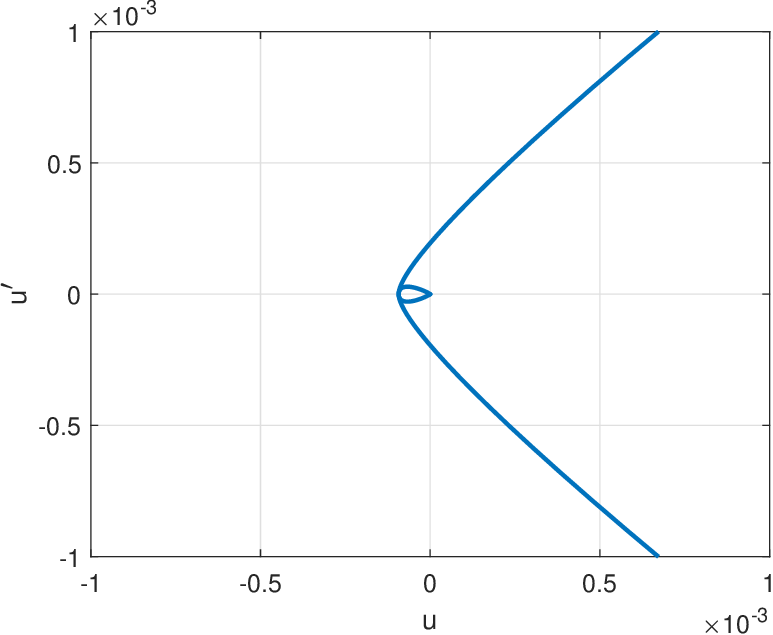}}
%\subfigure[]
%{\includegraphics[width=6.27cm,height=5cm]{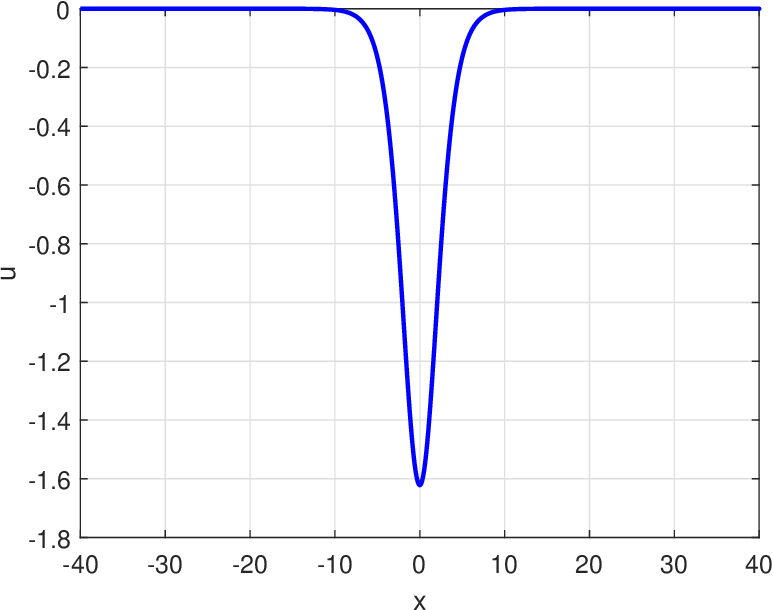}}
%\subfigure[]
%{\includegraphics[width=6.27cm,height=5cm]{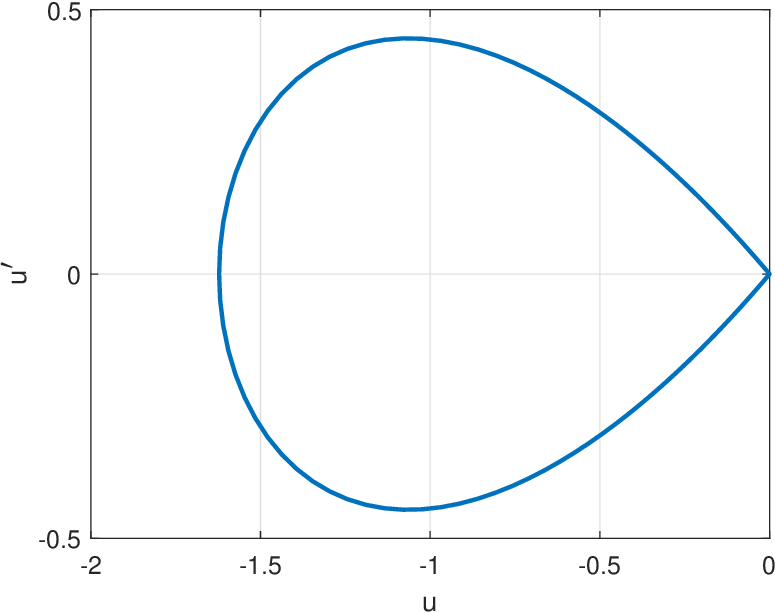}}
%\subfigure[]
%{\includegraphics[width=6.27cm,height=5cm]{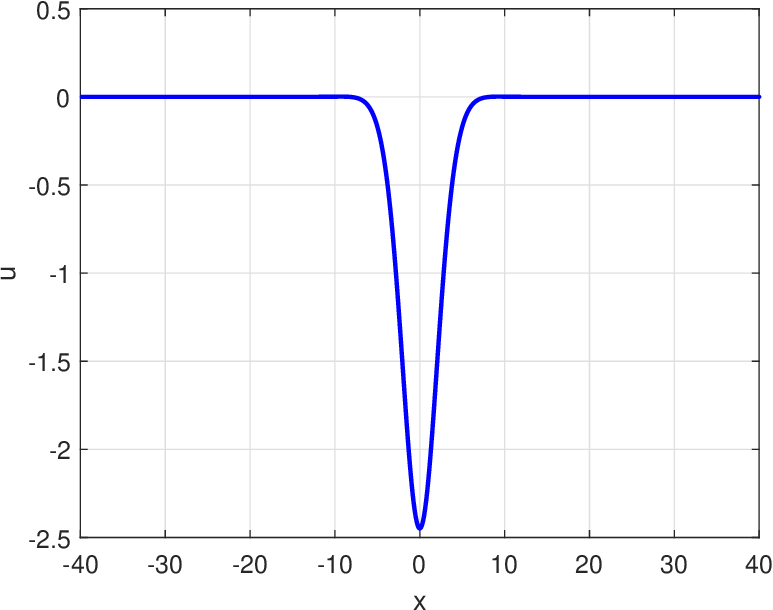}}
%\subfigure[]
%{\includegraphics[width=6.27cm,height=5cm]{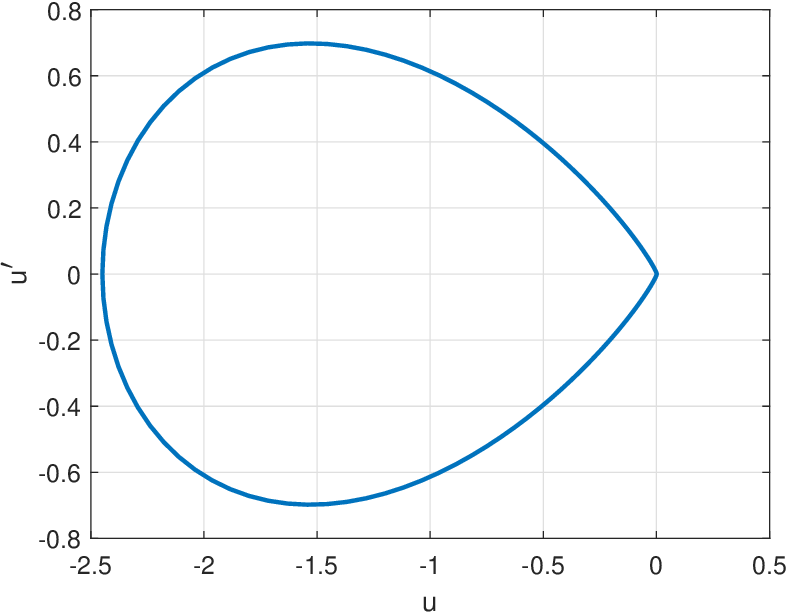}}
%\subfigure[]
%{\includegraphics[width=6.27cm,height=5cm]{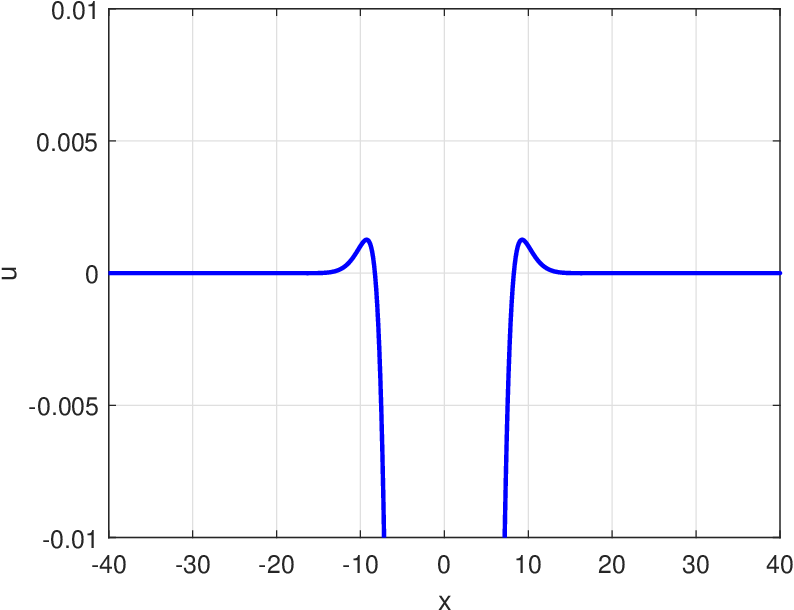}}
%\subfigure[]
%{\includegraphics[width=6.27cm,height=5cm]{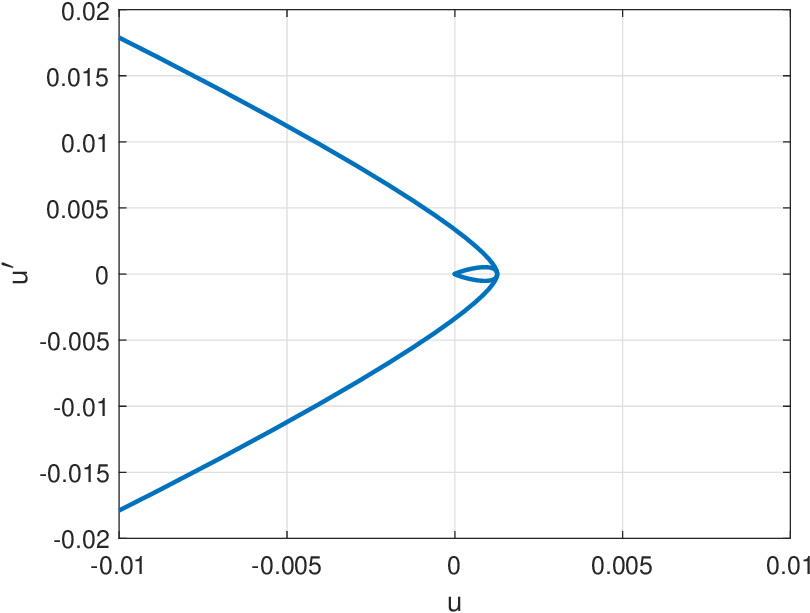}}
%\subfigure[]
%{\includegraphics[width=6.27cm]{case_a6_c3.eps}}
\caption{Bifurcation from region 2 to region 1 close to $\mathcal{C}_{3}$, $u$ profiles and phase portraits. (a), (b) Rosenau-RLW equation with $\alpha=-1, \epsilon=\beta=1, g(u)=u^{2}/2; c_{s}=1.3$; (c), (d) Rosenau-RLW equation with $\alpha=-1, \epsilon=\beta=1, g(u)=u^{2}/2; c_{s}=1.7$; (e) and (f) are magnifications of (c) and (d) resp.}
\label{GMfig8}
\end{figure}
\begin{figure}[htbp]


\centering
%\subfigure[]
%{\includegraphics[width=6.27cm,height=5cm]{CSW_fig4a.eps}}
%\subfigure[]
%{\includegraphics[width=6.27cm,height=5cm]{CSW_fig4b.eps}}
%\subfigure[]
%{\includegraphics[width=6.27cm,height=5cm]{NMCSW_fig4a.eps}}
%\subfigure[]
%{\includegraphics[width=6.27cm,height=5cm]{NMCSW_fig4b.eps}}
%\subfigure[]
%{\includegraphics[width=6.27cm,height=5cm]{NMCSW_fig4a_f.eps}}
%\subfigure[]
%{\includegraphics[width=6.27cm,height=5cm]{NMCSW_fig4b_f.eps}}
\subfigure[]
{\includegraphics[width=6.27cm,height=5cm]{CSW_fig5a.eps}}
\subfigure[]
{\includegraphics[width=6.27cm,height=5cm]{CSW_fig5b.eps}}
\subfigure[]
{\includegraphics[width=6.27cm,height=5cm]{NMCSW_fig5a.eps}}
\subfigure[]
{\includegraphics[width=6.27cm,height=5cm]{NMCSW_fig5b.eps}}
\subfigure[]
{\includegraphics[width=6.27cm,height=5cm]{NMCSW_fig5a_f.eps}}
\subfigure[]
{\includegraphics[width=6.27cm,height=5cm]{NMCSW_fig5b_f.eps}}
%\subfigure[]
%{\includegraphics[width=6.27cm]{case_a6_c3.eps}}
\caption{Bifurcation from region 2 to region 1 close to $\mathcal{C}_{3}$, $u$ profiles and phase portraits. 
(a), (b)
Rosenau-Kawahara equation with $\eta=-1, \epsilon=1, \beta=2, \gamma=1, g(u)=u^{2}/2, c_{s}\approx 0.4538$; (c), (d) Rosenau-Kawahara equation with $\gamma=1, \epsilon=1, \beta=2, \eta=-1, g(u)=u^{2}/2, c_{s}\approx 0.1438$; (e) and (f) are magnifications of (c) and (d) resp.}
\label{GMfig9}
\end{figure}

\begin{table}[htbp]
\begin{center}
\begin{tabular}{|c|c||c|c|}
    \hline
\multicolumn{2}{|c|}{Rosenau-RLW}\\
\multicolumn{2}{|c|}{($\gamma=\eta=0, \epsilon,\beta>0$)}\\
\hline
Admissible parameters&Type of solitary wave\\
\hline
 $\alpha<0, 0<c_{s}-\epsilon<\frac{\epsilon\alpha^{2}}{4\beta-\alpha^{2}}$&CSW\\
$c_{s}-\epsilon$ small&\\
    \hline
$\alpha>0, c_{s}-\epsilon<0$&GSW\\
$|c_{s}-\epsilon|$ small&\\
    \hline
$\alpha>0, c_{s}-\epsilon-\frac{\epsilon\alpha^{2}}{4\beta-\alpha^{2}}>0$ small&NMCSW\\
    \hline
    \hline
\end{tabular}
\end{center}
\caption{Admissible parameters and type of solitary wave for the Rosenau-RLW equation, according to the Normal Form Theory (cf. Appendix \ref{GMapp1}). NMCSW$=$Nonmonotone CSW.}
\label{GMtav1}
\end{table}

\begin{table}[htbp]
\begin{center}
\begin{tabular}{|c|c||c|c|}
    \hline
\multicolumn{2}{|c|}{Rosenau-KdV}\\
\multicolumn{2}{|c|}{($\alpha=\gamma=0, \epsilon,\beta>0$)}\\
\hline
Admissible parameters&Type of solitary wave\\
\hline
 $\eta>0, \epsilon<c_{s}<x_{+}$&CSW\\
$c_{s}-\epsilon$ small&\\
    \hline
$\eta<0, c_{s}-\epsilon<0$&GSW\\
$|c_{s}-\epsilon|$ small&\\
    \hline
$\eta<0, c_{s}-\epsilon-x_{+}>0$ small&NMCSW\\
    \hline
    \hline
\end{tabular}
\end{center}
\caption{Admissible parameters and type of solitary wave for the Rosenau-KdV equation, according to the Normal Form Theory, where $x_{+}=\frac{1}{2}\left(-\epsilon+\sqrt{\epsilon^{2}+\frac{\eta^{2}}{\beta}}\right)$ (cf. Appendix \ref{GMapp1}).}
\label{GMtav2}
\end{table}

\begin{table}[htbp]
\begin{center}
\begin{tabular}{|c|c||c|c|}
    \hline
\multicolumn{3}{|c|}{Rosenau-Kawahara}\\
\multicolumn{3}{|c|}{($\alpha=0, \epsilon,\beta>0$)}\\
\hline
$\rho=\epsilon\beta-\gamma$&Admissible parameters&Type of solitary wave\\
\hline
$\rho<0$& $\eta<0, y_{-}<c_{s}-\epsilon<0$, $|c_{s}-\epsilon|$ small&CSW\\
&or&\\
&$\eta>0, c_{s}-\epsilon>-\rho/\beta$&\\
    \hline
&$\eta>0, 0< c_{s}-\epsilon<-\rho/\beta$&GSW\\
&$c_{s}-\epsilon$ small&\\
    \hline
&$\eta>0, y_{-}-(c_{s}-\epsilon)>0$ small&NMCSW\\
&or&\\
&$\eta<0, c_{s}-\epsilon-y_{+}>0$ small&\\
    \hline
    \hline
$\rho>0$& $\eta>0, 0<c_{s}-\epsilon<y_{+}$, $|c_{s}-\epsilon|$ small&CSW\\
&or&\\
&$\eta<0, c_{s}-\epsilon<-\rho/\beta$&\\
    \hline
&$\eta<0, -\rho/\beta< c_{s}-\epsilon<0$&GSW\\
&$|c_{s}-\epsilon|$ small&\\
    \hline
&$\eta>0, y_{-}-(c_{s}-\epsilon)>0$ small&NMCSW\\
&or&\\
&$\eta<0, c_{s}-\epsilon-y_{+}>0$ small&\\
    \hline
    \hline
$\rho=0$& $\beta$ large and $\eta>0, 0<c_{s}-\epsilon<y_{+}$ &CSW\\
&or&\\
& $\beta$ large and $\eta<0, y_{-}<c_{s}-\epsilon<0$ &\\
\hline
&$\eta>0, y_{-}-(c_{s}-\epsilon)>0$ small &NMCSW\\
&or&\\
& $\eta<0, (c_{s}-\epsilon)-y_{+}>0$ small&\\
\hline\hline
\end{tabular}
\end{center}
\caption{Admissible parameters and type of solitary wave for the Rosenau-Kawahara equation, according to the Normal Form Theory, where $y_{\pm}=\frac{1}{2}\left(-\frac{\rho}{\beta}\pm\sqrt{\left(\frac{\rho}{\beta}\right)^{2}+\frac{\eta^{2}}{\beta}}\right)$ (cf. Appendix \ref{GMapp1}).}
\label{GMtav3}
\end{table}

\begin{table}[htbp]
\begin{center}
\begin{tabular}{|c|c||c|c|}
    \hline
\multicolumn{2}{|c|}{Rosenau-RLW-Kawahara}\\
\multicolumn{2}{|c|}{($\alpha=\gamma=-1, \epsilon=\beta=1, \eta>0$)}\\
\hline
Admissible parameters&Type of solitary wave\\
\hline
 $0<c_{s}-\epsilon<z_{+}$&CSW\\
$c_{s}-\epsilon$ small&\\
    \hline
    \hline
\end{tabular}
\end{center}
\caption{Admissible parameters and type of solitary wave for the Rosenau-RLW-Kawahara equation, according to the Normal Form Theory (cf. Appendix \ref{GMapp1})), where $z_{+}=\frac{1}{2}\left(\frac{2(\eta-3)}{3}+\sqrt{\left(\frac{2(\eta-3)}{3}\right)^{2}+4\frac{(1+\eta)^{2}}{3}}\right)$.}
\label{GMtav4}
\end{table}

\section{Existence of CSW's with CC Theory}
\label{sec4}
For the case $g(u)=G'(u), g(0)=0$, with $G(u)$ homogeneous of some degree $q>2$ and such that 
\begin{eqnarray}
\int_{\mathbb{R}}G(u)dx>0,\label{GM30}
\end{eqnarray}
for some $u$, the existence of CSW solutions of (\ref{GM1}) can be analyzed using the theory of Concentration-Compactness, developed by Lions in \cite{Lions} and which is a classical procedure to study solitary-wave solutions in nonlinear dispersive equations, \cite{Angulo,Weinstein}. In the present case we look for solitary waves of (\ref{GM1}) as solutions of minimization problems of the form
\begin{eqnarray}
I_{\lambda}=\inf\{E(u): u\in H^{2}/ F(u)=\lambda\},\label{GM31}
\end{eqnarray}
for $\lambda>0$ where, 
\begin{eqnarray}
E(u)&=&\int_{-\mathbb{R}}\left({(c_{s}-\epsilon)}u^{2}-{(c_{s}\alpha-\eta)}u_{x}^{2}+{(c_{s}\beta-\gamma)}u_{xx}^{2}\right)dx,\label{GM32a}\\
F(u)&=&\int_{\mathbb{R}}G(u)dx.\label{GM32b}
\end{eqnarray}
%\begin{remark}
%For the case of (P1) with
%$$g(u)=g(u,u_{x},u_{xx}),$$ or when $G(u)$ is a sum of homogeneous functions, the application of C-C theory to (\ref{GM1}) can follow the steps developed in \cite{EsfahaniL2021}, see also \cite{Weinstein}.
%\end{remark}
The main hypothesis we assume here is the existence of positive constants $C_{1}, C_{2}$ such that for all $u\in H^{2}$
\begin{eqnarray}
C_{1}||u||_{2}^{2}\leq E(u)\leq C_{2}||u||_{2}^{2}.\label{GM33}
\end{eqnarray}
Note that the continuity condition (second inequality in (\ref{GM33})) easily follows from the definition of $E$. The coercivity condition (first inequality in  (\ref{GM33})) can be characterized as follows.
\begin{proposition}
\label{proA} Let $\rho=\beta\epsilon-\gamma, \delta=\alpha\epsilon-\eta$, and
\begin{eqnarray}
x_{\pm}=\frac{1}{4\beta-\alpha^{2}}\left({\alpha\delta-2\rho}\pm\sqrt{(\alpha\delta-2\rho)^{2}+\delta^{2}(4\beta-\alpha^{2})}\right).\label{A*1}
\end{eqnarray}
If 
\begin{eqnarray}
c_{s}-\epsilon<x_{-}\;{\rm or}\; c_{s}-\epsilon>x_{+},\label{A*2}
\end{eqnarray}
then (\ref{GM33}) holds for some positive constants $C_{j}, j=1,2$ and all $u\in H^{2}$.
\end{proposition}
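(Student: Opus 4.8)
The plan is to diagonalize $E$ in Fourier space and reduce the two-sided bound (\ref{GM33}) to a scalar inequality for its symbol. Using Parseval and $\widehat{\partial_x^j u}=(ik)^j\widehat u$ in (\ref{GM32a}), I would first write $E(u)=\frac{1}{2\pi}\int_{\mathbb{R}} p(k)\,\abs{\widehat u(k)}^2\,dk$, where the symbol is
\[
p(k)=(c_s-\epsilon)-(c_s\alpha-\eta)k^2+(c_s\beta-\gamma)k^4 .
\]
Writing $s=c_s-\epsilon$ and using $\rho=\beta\epsilon-\gamma$, $\delta=\alpha\epsilon-\eta$, the coefficients become $c_s\alpha-\eta=\delta+\alpha s$ and $c_s\beta-\gamma=\rho+\beta s$, so that $p(k)=P(k^2)$ with
\[
P(y)=(\rho+\beta s)\,y^2-(\delta+\alpha s)\,y+s,\qquad y=k^2\ge 0 .
\]
Since $\|u\|_2^2$ is equivalent to $\frac{1}{2\pi}\int_{\mathbb{R}}(1+k^2+k^4)\abs{\widehat u}^2\,dk$, the estimate (\ref{GM33}) is equivalent to the existence of $C_1,C_2>0$ with $C_1(1+y+y^2)\le P(y)\le C_2(1+y+y^2)$ for all $y\ge 0$.

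The upper bound is immediate, since the three coefficients of $P$ are fixed constants; this settles the continuity half of (\ref{GM33}) at once. For the lower bound the key observation is that it suffices to show $P$ has \emph{no real zero}: then $P$ keeps a constant sign, $\abs{P(y)}/(1+y+y^2)$ is continuous and strictly positive on $[0,\infty)$ with limit $\abs{\rho+\beta s}$ as $y\to\infty$, hence is bounded below by a positive constant. Thus everything reduces to the sign of the discriminant of $P$, viewed as a quadratic in $y$,
\[
\Delta(s)=(\delta+\alpha s)^2-4(\rho+\beta s)s=(\alpha^2-4\beta)s^2+2(\alpha\delta-2\rho)s+\delta^2 .
\]
I would then note that, by (\ref{linearw}), the leading coefficient $\alpha^2-4\beta$ is negative, so $s\mapsto\Delta(s)$ is a downward parabola, and the quadratic formula shows that its two roots are exactly $x_\pm$ of (\ref{A*1}). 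Hence $\Delta(s)<0$ is equivalent to $s<x_-$ or $s>x_+$, which is precisely (\ref{A*2}); in this range $P$ has no real zero, and the coercivity estimate follows.

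The step I expect to require the most care is not an estimate but the bookkeeping of the sign of $E$, controlled by the leading coefficient $\rho+\beta s$. Evaluating the discriminant at $s=-\rho/\beta$ gives $\Delta(-\rho/\beta)=(\alpha\rho/\beta-\delta)^2\ge 0$, so $-\rho/\beta\in[x_-,x_+]$; consequently $\rho+\beta s>0$ on the branch $s>x_+$ and $\rho+\beta s<0$ on the branch $s<x_-$. On the first branch $P>0$, so (\ref{GM33}) holds exactly as written, while on the second branch the symbol is negative and it is $-E$ (equivalently $\abs{E}$) that is coercive. In both cases one obtains $\abs{P(y)}\ge C_1(1+y+y^2)$, that is $C_1\|u\|_2^2\le\abs{E(u)}\le C_2\|u\|_2^2$, which is the coercivity actually needed to set up the constrained minimization (\ref{GM31}) with the correct overall sign; I would phrase the conclusion in this two-sided form, recording that (\ref{GM33}) as stated corresponds to the branch $s>x_+$.
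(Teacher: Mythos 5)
Your proof is correct and reaches the same algebraic criterion as the paper, but by a different decomposition. The paper integrates by parts to write $E(u)=\langle A(u,u_{xx})^{T},(u,u_{xx})^{T}\rangle$ for a $2\times 2$ matrix $A$ and asks for $A$ to be definite; its determinant condition is exactly your discriminant condition, since $4\det A=-\Delta(s)$, and the roots of both quadratics are precisely the $x_{\pm}$ of (\ref{A*1}). Your Fourier-symbol route buys a little extra: the continuity half of (\ref{GM33}) comes for free from boundedness of the symbol, and in principle one only needs $P(y)>0$ for $y\ge 0$ rather than definiteness of the full quadratic form, so the symbol viewpoint could in other situations yield a slightly weaker sufficient condition (you do not pursue this, and the condition you derive is the one the proposition asserts). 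More importantly, your sign bookkeeping on the two branches is more careful than the paper's: the paper concludes that under (\ref{A*2}) the matrix $A$ ``is shown to be positive definite,'' but on the branch $c_{s}-\epsilon<x_{-}\le 0$ the $(1,1)$ entry $c_{s}-\epsilon$ is negative while $\det A>0$, so $A$ is in fact negative definite there and it is $-E$ (equivalently $\abs{E}$) that is coercive; (\ref{GM33}) as literally written holds only on the branch $c_{s}-\epsilon>x_{+}$. Your closing observation records exactly this two-sided reading, which is the form of coercivity actually needed for the minimization (\ref{GM31}), and is the correct interpretation of the proposition.
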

\begin{proof}
As mentioned before, we just have to prove the first inequality in (\ref{GM33}). We can write (\ref{GM32a}) in the form
\begin{eqnarray}
E(u)&=&\int_{-\mathbb{R}}\left({(c_{s}-\epsilon)}u^{2}+{(c_{s}\alpha-\eta)}uu_{xx}+{(c_{s}\beta-\gamma)}u_{xx}^{2}\right)dx\nonumber\\
&=&\langle A\begin{pmatrix}u\\u_{xx}\end{pmatrix}, \begin{pmatrix}u\\u_{xx}\end{pmatrix}\rangle,\label{A*4}
\end{eqnarray}
where $\langle\cdot,\cdot\rangle$ denotes the $L^{2}$ inner product and
\begin{eqnarray*}
A=\begin{pmatrix}c_{s}-\epsilon&\frac{\alpha(c_{s}-\epsilon)+\delta}{2}\\
\frac{\alpha(c_{s}-\epsilon)+\delta}{2}&\beta(c_{s}-\epsilon)+\rho\end{pmatrix}.
\end{eqnarray*}
The eigenvalues $z_{\pm}$ of $A$ satisfy $z_{+}>0$ while $z_{-}>0$ when
\begin{eqnarray}
P(c_{s}-\epsilon)>0,\label{A*3}
\end{eqnarray}
with
\begin{eqnarray*}
P(x)=x^{2}+\frac{2(2\rho-\alpha\delta)}{4\beta-\alpha^{2}}x-\frac{\delta^{2}}{4\beta-\alpha^{2}}.
\end{eqnarray*}
The roots of $P$ are given by (\ref{A*1}) and they satisfy $x_{-}\leq 0\leq x_{+}$. Thus, (\ref{A*3}) holds when one of the conditions (\ref{A*2}) is satisfied. In such case, then $A$ is shown to be positive definite and (\ref{GM33}) follows from (\ref{A*4}).
\end{proof}
The characterization (\ref{A*2})
will be discussed below for the families of Rosenau equations considered in the present paper. This will give us a range of speeds ensuring the existence of CSW's in each case and that will be related to the corresponding results obtained in section \ref{sec3} from the NFT.

The following lemma uses to be the starting point for the application of the Concentration Compactness theory.
\begin{lemma}
\label{GMlemma31}
Under the hypotheses (\ref{GM30}) and (\ref{GM33}), the problems (\ref{GM31}) satisfy the following properties:
\begin{itemize}
\item[(i)] $I_{\lambda}>0$ for $\lambda>0$.
\item[(ii)] Every minimizing sequence for $I_{\lambda}, \lambda>0$, is bounded in $H^{2}$.
\item[(iii)] For all $\theta\in (0,\lambda)$
\begin{eqnarray}
I_{\lambda}<I_{\theta}+I_{\lambda-\theta}.\label{GM34}
\end{eqnarray}
\end{itemize}
\end{lemma}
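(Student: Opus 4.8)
The plan is to establish the three properties in the order they are stated, since each is a standard ingredient of the concentration-compactness method and the earlier properties feed into the scaling argument needed for the strict subadditivity (iii), which I expect to be the main obstacle.

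\textbf{Property (i).} First I would show $I_\lambda>0$. Using the coercivity hypothesis (\ref{GM33}), for any admissible $u$ (with $F(u)=\lambda$) we have $E(u)\geq C_1\|u\|_2^2$, so $E$ is bounded below by zero on the constraint set. To rule out $I_\lambda=0$, I would exploit the homogeneity of $G$: since $G$ is homogeneous of degree $q>2$, the functional $F$ scales like $\|u\|^q$ in an appropriate sense, whereas $E$ scales like $\|u\|_2^2$. More precisely, I would combine a Sobolev/Gagliardo-Nirenberg-type inequality controlling $F(u)$ by powers of $\|u\|_2$ with the upper bound $E(u)\leq C_2\|u\|_2^2$ from (\ref{GM33}). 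Since $G$ is homogeneous of degree $q>2$, one gets $|F(u)|\leq C\|u\|_2^q$ (using the embedding $H^2\hookrightarrow L^\infty$ and interpolation), so the constraint $F(u)=\lambda>0$ forces $\|u\|_2$ bounded away from zero, which yields $E(u)\geq C_1\|u\|_2^2\geq c(\lambda)>0$, giving $I_\lambda>0$.

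\textbf{Property (ii).} Boundedness of minimizing sequences is then immediate from (\ref{GM33}): if $(u_n)$ is minimizing for $I_\lambda$, then $E(u_n)\to I_\lambda<\infty$, so $E(u_n)$ is bounded, and the coercivity inequality $C_1\|u_n\|_2^2\leq E(u_n)$ forces $\|u_n\|_2$ to be bounded. No further work is needed here.

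\textbf{Property (iii), the strict subadditivity.} This is the crux. The standard route is to first derive the scaling relation $I_\lambda=\lambda^{2/q}I_1$. To see this, for admissible $u$ with $F(u)=\theta$ I would set $u_t(x)=t\,u(x)$ for $t>0$; by homogeneity of degree $q$, $F(u_t)=t^q F(u)=t^q\theta$, while $E(u_t)=t^2 E(u)$ since $E$ is quadratic. Choosing $t=(\lambda/\theta)^{1/q}$ maps the constraint level $\theta$ to $\lambda$ and shows $I_\lambda\leq(\lambda/\theta)^{2/q}I_\theta$; the reverse inequality follows symmetrically, giving $I_\lambda=(\lambda/\theta)^{2/q}I_\theta$, i.e. $I_\lambda=\lambda^{2/q}I_1$ with $I_1>0$ by part (i). The strict subadditivity (\ref{GM34}) is then a consequence of the strict concavity of the map $\lambda\mapsto\lambda^{2/q}$, which holds precisely because $2/q<1$ (here $q>2$). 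Concretely, for $0<\theta<\lambda$,
\begin{eqnarray*}
I_\theta+I_{\lambda-\theta}=I_1\left(\theta^{2/q}+(\lambda-\theta)^{2/q}\right)>I_1\,\lambda^{2/q}=I_\lambda,
\end{eqnarray*}
where the strict inequality uses subadditivity of $s\mapsto s^{2/q}$ for exponent less than one. The main subtlety to watch is the sign: one must ensure $I_1>0$ (guaranteed by (i)) so that multiplying the strict inequality $\theta^{2/q}+(\lambda-\theta)^{2/q}>\lambda^{2/q}$ by $I_1$ preserves the direction. Thus the hard part is not any single computation but assembling the exact homogeneity-scaling identity correctly and invoking the strict concavity in the right direction; with the scaling relation in hand, (iii) follows cleanly.
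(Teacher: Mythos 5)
Your proposal is correct and follows essentially the same route as the paper: part (i) via the bound $|F(u)|\leq C\|u\|_{2}^{q}$ from homogeneity and the embedding $H^{2}\hookrightarrow L^{\infty}$ combined with coercivity, part (ii) directly from coercivity, and part (iii) from the scaling identity $I_{\tau\lambda}=\tau^{2/q}I_{\lambda}$ together with strict subadditivity of $\tau\mapsto\tau^{2/q}$ for $q>2$ and the positivity from (i). The only difference is cosmetic: you normalize through $I_{1}$ while the paper works with $I_{\tau\lambda}$ and $I_{(1-\tau)\lambda}$ directly, and you spell out the two-sided scaling argument that the paper merely asserts.
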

\begin{proof}
Note first that since $G$ is homogeneous of degree $q>2$ then \cite{Levandosky1999}
\begin{eqnarray*}
|G(u)|\leq C|u|^{q},\label{GM35}
\end{eqnarray*}
for some constant $C$. Then, if $u$ minimizes (\ref{GM31}) for $\lambda>0$, from (\ref{GM32b}) and the Sobolev embedding of $H^{2}$ into $L^{\infty}$ it holds that
\begin{eqnarray}
\lambda&=&\int_{\mathbb{R}}G(u)dx\leq C\int_{\mathbb{R}}|u|^{q}dx\leq C||u||_{L^{\infty}}^{q-2}||u||_{L^{2}}^{2}\label{GM36a}\\
&\leq &C||u||_{2}^{q},\label{GM36b}
\end{eqnarray}
for some constant $C$. Using (\ref{GM33}) and (\ref{GM36b}) we have
\begin{eqnarray*}
I_{\lambda}=E(u)\geq C\lambda^{2/q}>0,
\end{eqnarray*}
for some constant $C$ and (i) follows. Property (ii) is also a consequence of (\ref{GM36b}) and coercivity hypothesis (\ref{GM33}).
On the other hand, since $E$ and $F$ are homogeneous of degrees $2$ and $q$ respectively, then
\begin{eqnarray*}
I_{\tau\lambda}=\tau^{\frac{2}{q}}I_{\lambda}.
\end{eqnarray*} 
Therefore, if $\theta\in (0,\lambda)$, with $\theta=\tau\lambda, \tau\in (0,1)$, then we have, for $q>2$
\begin{eqnarray*}
I_{\theta}+I_{\lambda-\theta}&=&I_{\tau\lambda}+I_{(1-\tau)\lambda}=\tau^{\frac{2}{q}}I_{\lambda}+(1-\tau)^{\frac{2}{q}}I_{\lambda}\\
&>&(\tau+(1-\tau))I_{\lambda},
\end{eqnarray*}
and (\ref{GM34}) follows.
\end{proof}
The application of Concentration-Compactness theory in order to prove the existence of CSW's of (\ref{GM1}) under the assumption (\ref{GM33}) is as follows.
 Let $\{u_{n}\}_{n}$ be a minimizing sequence for (\ref{GM31}) and consider the sequence of nonnegative functions
\begin{eqnarray*}
\rho_{n}(x)=|u_{n}(x)|^{2}+|u'_{n}(x)|^{2}+|u''_{n}(x)|^{2}.\label{fnls_2310}
\end{eqnarray*}
Then $\rho_{n}\in L^{1}$ and its $L^{1}$ norm satisfies
$\lambda_{n}=||\rho_{n}||_{L^{1}}=||u_{n}||_{2}^{2}.$ From Lemma \ref{GMlemma31}, $\lambda_{n}$ is bounded and from (\ref{GM33}), (\ref{GM36b})
$$\lambda_{n}>C\lambda^{\frac{2}{q}},$$ for some constant $C$.
Let $\sigma=\lim_{n\rightarrow\infty}\lambda_{n}>0$. Normalizing $\rho_{n}$ as
$\widetilde{\rho}_{n}(x)=\sigma\rho_{n}(\lambda_{n}x)$, to have
$$\widetilde{\lambda}_{n}=||\widetilde{\rho}_{n}||_{L^{1}}=\sigma,$$
(and dropping tildes from now on), then from Lemma 1.1 of \cite{Lions} there is a subsequence $\{\rho_{n_{k}}\}_{k\geq 1}$ satisfying one of the following three possibilities:
\begin{itemize}
\item[(1)] (Compactness.) There are $y_{k}\in\mathbb{R}$ such that $\rho_{n_{k}}(\cdot+y_{k})$ satisfies that for any $\widetilde{\epsilon}>0$ there exists $R=R(\widetilde{\epsilon})>0$ large enough such that
\begin{eqnarray*}
\int_{|x-y_{k}|\leq R}\rho_{n_{k}}(x)dx\geq \sigma-\widetilde{\epsilon}.
\end{eqnarray*}
\item[(2)] (Vanishing.) For any $R>0$
\begin{eqnarray}
\lim_{k\rightarrow\infty}\sup_{y\in\mathbb{R}}\int_{|x-y|\leq R}\rho_{n_{k}}(x)dx=0.\label{fnls_2311}
\end{eqnarray}
\item[(3)] (Dichotomy.) There is $\theta_{0}\in (0,\sigma)$ such that for any $\widetilde{\epsilon}>0$ there exists $k_{0}\geq 1$ and $\rho_{k,{1}}, \rho_{k,{2}}\in L^{1}$ with $\rho_{k_{1}}, \rho_{k_{2}}\geq 0$ such that for $k\geq k_{0}$
\begin{eqnarray}
&&\int_{\mathbb{R}}|\rho_{n_{k}}-(\rho_{k,1}+\rho_{k,2})|dx\leq \widetilde{\epsilon},\label{fnls_2312}\\
&&\left|\int_{\mathbb{R}}\rho_{k,1}dx-\theta_{0}\right|\leq\widetilde{\epsilon},\quad
\left|\int_{\mathbb{R}}\rho_{k,2}dx-(\sigma-\theta_{0})\right|\leq\widetilde{\epsilon},\nonumber
\end{eqnarray}
with
\begin{eqnarray*}
&&{\rm supp}\rho_{k,1}\cap{\rm supp}\rho_{k,2}=\emptyset,\\
&&{\rm dist}\left({\rm supp}\rho_{k,1},{\rm supp}\rho_{k,2}\right)\rightarrow+\infty,\; k\rightarrow\infty.
\end{eqnarray*}
Since the supports of $\rho_{k,1}$ and $\rho_{k,2}$ are disjoint, we may assume the existence of $R_{0}>0$ and sequences $\{y_{k}\}_{k}$ and $R_{k}\rightarrow\infty$ as $k\rightarrow\infty$ such that, \cite{AnguloS2020}
\begin{eqnarray}
&&{\rm supp}\rho_{k,1}\subset (y_{k}-R_{0},y_{k}+R_{0}),\nonumber\\
&&{\rm supp}\rho_{k,2}\subset (-\infty,y_{k}-2R_{k})\cup (y_{k}+2R_{k},\infty).\label{fnls_2312a}
\end{eqnarray}
%Thus, using (\ref{fnls_2312}), (\ref{fnls_2312a}) it holds that, \cite{AnguloS2020}
%\begin{eqnarray}
%&&\int_{|x-y_{k}|\leq R_{0}}|\rho_{n_{k}}(x)-\rho_{k,1}(x)|dx\leq \epsilon\label{fnls_2312b}\\
%&&\int_{|x-y_{k}|\geq 2R_{k}}|\rho_{n_{k}}(x)-\rho_{k,2}(x)|dx\leq \epsilon,\;
%\int_{R_{0}\leq |x-y_{k}|\leq 2R_{k}}\rho_{n_{k}}(x)dx\leq \epsilon.\nonumber
%\end{eqnarray}
\end{itemize}

Our goal is to prove that compactness holds by ruling out the other two possibilities. Note first that if vanishing property (\ref{fnls_2311}) hods, then
\begin{eqnarray*}
\lim_{k\rightarrow\infty}\sup_{y\in\mathbb{R}}\int_{y-R}^{y+R}\left(|u_{n_{k}}(x)|^{2}\right)dx=0.
\end{eqnarray*}
Since $\{u_{n}\}_{n}$ is a bounded sequence in $H^{2}$ and from (\ref{GM36a}) we have
\begin{eqnarray*}
F(u_{n_{k}})\leq C||u_{n_{k}}||_{L^{\infty}}^{q-2}||u_{n_{k}}||_{L^{2}}^{2}\leq C||u_{n_{k}}||_{L^{2}}^{2},
\end{eqnarray*}
for some constant $C$. This implies that
$$\lim_{k\rightarrow\infty}F(u_{n_{k}})=0,$$ which contradicts the fact that $\lambda>0$ and vanishing is not possible.

We now assume that dichotomy holds and consider cutoff functions $\varphi,\phi\in C^{\infty}(\mathbb{R}), 0\leq\varphi,\phi\leq 1$, with
\begin{eqnarray*}
&&\phi(x)=1,\quad |x|\leq 1,\quad \phi(x)=0,\quad |x|\geq 2,\\
&&\varphi(x)=1,\quad |x|\geq 2,\quad \varphi(x)=0,\quad |x|\leq 1.
\end{eqnarray*}
Let $R>R_{0}$ and define, for $ x\in\mathbb{R}$,
\begin{eqnarray*}
&&\phi_{k}(x)=\phi\left(\frac{x-y_{k}}{R}\right),\quad 
u_{k,1}=\phi_{k}(x)u_{n_{k}}(x),\\ 
&&\varphi_{k}(x)=\varphi\left(\frac{x-y_{k}}{R_{k}}\right),\quad u_{k,2}=\varphi_{k}(x)u_{n_{k}}(x),\\
&&w_{k}(x)=u_{n_{k}}(x)-u_{k,1}(x)-u_{k,2}(x).
\end{eqnarray*}
In the following lemma some auxiliary results are collected, \cite{AnguloS2020}.
\begin{lemma}
\label{GMlemma32}
Let $\widetilde{\epsilon}>0$. If dichotomy holds, then:
\begin{itemize}
\item[(1)] For $R$ large enough
\begin{eqnarray}
&&\int_{|x-y_{k}|\leq R_{0}}|\rho_{n_{k}}(x)-\rho_{k,1}(x)|dx\leq \widetilde{\epsilon}\label{GM37a}\\
&&\int_{|x-y_{k}|\geq 2R_{k}}|\rho_{n_{k}}(x)-\rho_{k,2}(x)|dx\leq \widetilde{\epsilon},\label{GM37b}\\
&&\int_{R_{0}\leq |x-y_{k}|\leq 2R_{k}}\rho_{n_{k}}(x)dx\leq \widetilde{\epsilon}.\label{GM37c}
\end{eqnarray}
\item[(2)] For $R,R_{k}$ large enough
\begin{eqnarray}
\left|||u_{k,1}||_{2}^{2}-\int_{\mathbb{R}}\rho_{k,1}dx\right|&=&O(\widetilde{\epsilon}),\label{GM37d}\\
\left|||u_{k,2}||_{2}^{2}-\int_{\mathbb{R}}\rho_{k,2}dx\right|&=&O(\widetilde{\epsilon}).\label{GM37e}
\end{eqnarray}
\item[(3)] For $R,R_{k}$ large enough
\begin{eqnarray}
||w_{k}||_{2}=O(\widetilde{\epsilon}).\label{GM37f}
\end{eqnarray}
\end{itemize}
\end{lemma}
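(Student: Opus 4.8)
The plan is to treat part (1) as an immediate consequence of the dichotomy decomposition together with the disjointness of the supports, and then to derive parts (2) and (3) from (1) by a standard cutoff-commutator computation, in which every term carrying a derivative of $\phi_{k}$ or $\varphi_{k}$ is controlled by the small-mass estimate \eqref{GM37c}.

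First I would prove (1). On the ball $|x-y_{k}|\le R_{0}$ the function $\rho_{k,2}$ vanishes, since by \eqref{fnls_2312a} its support lies in $|x-y_{k}|>2R_{k}>R_{0}$; hence there $|\rho_{n_{k}}-\rho_{k,1}|=|\rho_{n_{k}}-(\rho_{k,1}+\rho_{k,2})|$, and integrating the bound \eqref{fnls_2312} over this ball gives \eqref{GM37a}. Symmetrically, on $|x-y_{k}|\ge 2R_{k}$ the function $\rho_{k,1}$ vanishes and the same bound yields \eqref{GM37b}. Finally, on the annulus $R_{0}\le |x-y_{k}|\le 2R_{k}$ both $\rho_{k,1}$ and $\rho_{k,2}$ vanish, so $\rho_{n_{k}}=|\rho_{n_{k}}-(\rho_{k,1}+\rho_{k,2})|$ there and \eqref{GM37c} follows from the same inequality. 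This is where the localization really lives; the rest is bookkeeping.

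For (2) and (3) I would fix the scales so that $R_{0}<R<R_{k}$ (possible for each large $k$, since $R_{k}\to\infty$), which makes the transition annuli of $\phi_{k}$, namely $R\le |x-y_{k}|\le 2R$, and of $\varphi_{k}$, namely $R_{k}\le |x-y_{k}|\le 2R_{k}$, disjoint and both contained in $R_{0}\le |x-y_{k}|\le 2R_{k}$. For \eqref{GM37d} I would split $\|u_{k,1}\|_{2}^{2}$ over the three regions $|x-y_{k}|\le R$ (where $\phi_{k}\equiv 1$, so the integrand equals $\rho_{n_{k}}$ exactly), the transition annulus, and $|x-y_{k}|\ge 2R$ (where $u_{k,1}=0$). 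Expanding $(\phi_{k}u_{n_{k}})'$ and $(\phi_{k}u_{n_{k}})''$ by Leibniz and using $0\le\phi_{k}\le 1$, $|\phi_{k}'|\le C/R$, $|\phi_{k}''|\le C/R^{2}$ with $R\ge 1$, every term on the transition annulus is bounded by $C\rho_{n_{k}}$, so its integral is $\le C\widetilde{\epsilon}$ by \eqref{GM37c}. Together with $\int_{R_{0}\le|x-y_{k}|\le R}\rho_{n_{k}}\le\widetilde{\epsilon}$ this gives $\|u_{k,1}\|_{2}^{2}=\int_{|x-y_{k}|\le R_{0}}\rho_{n_{k}}+O(\widetilde{\epsilon})$; since $\rho_{k,1}$ is supported in $|x-y_{k}|<R_{0}$, \eqref{GM37a} shows $\int_{\mathbb{R}}\rho_{k,1}=\int_{|x-y_{k}|\le R_{0}}\rho_{n_{k}}+O(\widetilde{\epsilon})$, and subtracting yields \eqref{GM37d}. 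The estimate \eqref{GM37e} is proved identically, replacing $\phi_{k}$ by $\varphi_{k}$, the inner ball by $|x-y_{k}|\ge 2R_{k}$, and invoking \eqref{GM37b}. For (3) I write $w_{k}=(1-\phi_{k}-\varphi_{k})u_{n_{k}}$; the multiplier $1-\phi_{k}-\varphi_{k}$ vanishes for $|x-y_{k}|\le R$ and for $|x-y_{k}|\ge 2R_{k}$, is bounded by $1$, and has derivatives bounded by $C/R$ and $C/R^{2}$, so by the same Leibniz bound $\|w_{k}\|_{2}^{2}\le C\int_{R_{0}\le|x-y_{k}|\le 2R_{k}}\rho_{n_{k}}\le C\widetilde{\epsilon}$ by \eqref{GM37c}.

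The main obstacle is not any single estimate but the careful choice of the three scales $R_{0}<R<R_{k}$ and the verification that the commutator terms, i.e. those in which a derivative falls on a cutoff, are genuinely absorbed by \eqref{GM37c}: this needs $R\ge 1$, so that the factors $1/R$, $1/R^{2}$ are harmless, and $R<R_{k}$, so that the two transition annuli do not overlap and both sit inside the small-mass region. I would also note the minor discrepancy that the argument yields $\|w_{k}\|_{2}^{2}=O(\widetilde{\epsilon})$ rather than $\|w_{k}\|_{2}=O(\widetilde{\epsilon})$ as written in \eqref{GM37f}; since $\widetilde{\epsilon}>0$ is arbitrary this is immaterial for the concentration-compactness argument that follows.
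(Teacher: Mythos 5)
Your proof is correct and follows essentially the same route as the paper's: part (1) by splitting the $L^{1}$ dichotomy bound \eqref{fnls_2312} over the three disjoint regions determined by \eqref{fnls_2312a}, and parts (2) and (3) by expanding the cutoffs via Leibniz and absorbing every term carrying a derivative of $\phi_{k}$ or $\varphi_{k}$ into the small-mass estimate \eqref{GM37c}. Your observation that the argument really yields $\|w_{k}\|_{2}^{2}=O(\widetilde{\epsilon})$ rather than $\|w_{k}\|_{2}=O(\widetilde{\epsilon})$ applies equally to the paper's own proof of \eqref{GM37f} and is, as you note, immaterial because $\widetilde{\epsilon}$ is arbitrary.
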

\begin{proof}
Using (\ref{fnls_2312}) and (\ref{fnls_2312a}) we have 
\begin{eqnarray}
O(\widetilde{\epsilon})&=&\int_{\mathbb{R}}|\rho_{n_{k}}-(\rho_{k,1}+\rho_{k,2})|dx\nonumber\\
&=&\int_{|x-y_{k}|\leq R_{0}}|\rho_{n_{k}}(x)-\rho_{k,1}(x)|dx+\int_{|x-y_{k}|\geq 2R_{k}}|\rho_{n_{k}}(x)-\rho_{k,2}(x)|dx\nonumber\\
&&+\int_{R_{0}\leq |x-y_{k}|\leq 2R_{k}}\rho_{n_{k}}(x)dx,\nonumber
\end{eqnarray}
which implies (\ref{GM37a})-(\ref{GM37c}). Note now that
\begin{eqnarray*}
&&\phi'_{k}(x)=\frac{1}{R}\phi'\left(\frac{x-y_{k}}{R}\right),\quad \phi''_{k}(x)=\frac{1}{R^{2}}\phi''\left(\frac{x-y_{k}}{R}\right),\\
&&\varphi'_{k}(x)=\frac{1}{R_{k}}\varphi'\left(\frac{x-y_{k}}{R_{k}}\right),\quad \varphi''_{k}(x)=\frac{1}{R_{k}^{2}}\varphi''\left(\frac{x-y_{k}}{R_{k}}\right).
\end{eqnarray*}
Then, for some constant $C$ and $R$ large enough
\begin{eqnarray}
||u_{k,1}||_{2}^{2}&=&\int_{\mathbb{R}}\left(|u_{k,1}(x)|^{2}+|u'_{k,1}(x)|^{2}+|u''_{k,1}(x)|^{2}\right)dx\nonumber\\
&\leq &\int_{|x-y_{k}|\leq 2R}\left(|u_{n_{k}}(x)|^{2}+|u'_{n_{k}}(x)|^{2}+|u''_{n_{k}}(x)|^{2}\right)dx\nonumber\\
&&+\frac{C}{R}\int_{|x-y_{k}|\leq 2R}|u_{n_{k}}(x)|^{2}dx\nonumber\\
&=&\int_{|x-y_{k}|\leq R_{0}}\rho_{n_{k}}(x)dx-\int_{\mathbb{R}}\rho_{k,1}(x)dx+\int_{\mathbb{R}}\rho_{k,1}(x)dx\nonumber\\
&&+\frac{C}{R}\int_{|x-y_{k}|\leq 2R}|u_{n_{k}}(x)|^{2}dx+\int_{R_{0}\leq |x-y_{k}|\leq 2R}\rho_{n_{k}}(x)dx.\nonumber
\end{eqnarray}
Therefore, using (\ref{fnls_2312a}), we have
\begin{eqnarray*}
\left| ||u_{k,1}||_{2}^{2}-\int_{\mathbb{R}}\rho_{k,1}(x)dx\right|&\leq&\underbrace{\int_{|x-y_{k}|\leq R_{0}}|\rho_{n_{k}}(x)-\rho_{k,1}(x)|dx}_{I_{1}}\\
&&+\underbrace{\frac{C}{R}\int_{|x-y_{k}|\leq 2R}|u_{n_{k}}(x)|^{2}dx}_{I_{2}}\\
&&+\underbrace{\int_{R_{0}\leq |x-y_{k}|\leq 2R}\rho_{n_{k}}(x)dx}_{I_{3}}.
\end{eqnarray*}
For $R$ large enough, (\ref{GM37a}), and (\ref{GM37c}), the integrals $I_{j}, j=1,2,3$, are $O(\widetilde{\epsilon})$, leading to (\ref{GM37d}). Similarly,
\begin{eqnarray*}
\left| ||u_{k,2}||_{2}^{2}-\int_{\mathbb{R}}\rho_{k,2}(x)dx\right|&\leq&\underbrace{\int_{|x-y_{k}|\geq R_{k}}|\rho_{n_{k}}(x)-\rho_{k,2}(x)|dx}_{J_{1}}\\
&&+\underbrace{\frac{C}{R_{k}}\int_{|x-y_{k}|\leq 2R_{k}}|u_{n_{k}}(x)|^{2}dx}_{J_{2}}\\
&&+\underbrace{\int_{R_{k}\leq |x-y_{k}|\leq 2R_{k}}\rho_{n_{k}}(x)dx}_{J_{3}},
\end{eqnarray*}
and for $R$ large enough, (\ref{GM37b}), and (\ref{GM37c}), the integrals $J_{j}, j=1,2,3$, are $O(\widetilde{\epsilon})$ and (\ref{GM37e}) holds. Note finally that 
\begin{eqnarray*}
&&w_{k}=\chi_{k}u_{n_{k}}, \quad \chi_{k}=1-\phi_{k}-\varphi_{k},\\
&&{\rm supp}\chi_{k}\subset \{R\leq |x-y_{k}|\leq 2R_{k}\}.
\end{eqnarray*}
Then
\begin{eqnarray}
||w_{k}||_{2}^{2}\leq C\left(||\chi_{k}||_{L^{\infty}}^{2}+||\chi'_{k}||_{L^{\infty}}^{2}+||\chi''_{k}||_{L^{\infty}}^{2}\right)\int_{R\leq |x-y_{k}|\leq 2R_{k}}\rho_{n_{k}}(x)dx,\label{GM37h}
\end{eqnarray}
for some constant $C$. The parenthesis on the right-hand side of (\ref{GM37h}) is bounded while the integral is $O(\widetilde{\epsilon})$ because of (\ref{GM37c}), implying (\ref{GM37f}).
\end{proof}
\begin{lemma}
\label{GMlemma33}
Let $\widetilde{\epsilon}>0$.
If dichotomy holds and $R, R_{k}$ are large enough, then there are subsequences of $\{u_{k,1}\}_{k}, \{u_{k,2}\}_{k}$, denoted again in the same way, such that
\begin{eqnarray}
E(u_{n_{k}})&=&E(u_{k,1})+E(u_{k,2})+O(\widetilde{\epsilon}),\label{GM38a}\\
F(u_{n_{k}})&=&F(u_{k,1})+F(u_{k,2})+O(\widetilde{\epsilon}).\label{GM38b}
\end{eqnarray}
\end{lemma}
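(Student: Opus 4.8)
The plan is to treat the quadratic functional $E$ and the homogeneous functional $F$ separately, in both cases exploiting that, for $R$ fixed and $R_{k}$ large enough, $u_{k,1}$ and $u_{k,2}$ have disjoint supports (indeed $\phi_{k}$ is supported in $|x-y_{k}|\leq 2R$ while $\varphi_{k}$ is supported in $|x-y_{k}|\geq R_{k}>2R$), together with the smallness $\|w_{k}\|_{2}=O(\widetilde{\epsilon})$ from (\ref{GM37f}). Setting $v_{k}=u_{k,1}+u_{k,2}$, so that $u_{n_{k}}=v_{k}+w_{k}$ and, since all the cutoffs lie in $[0,1]$, $|v_{k}|,|w_{k}|\leq |u_{n_{k}}|$ pointwise, reduces the task to splitting $E(v_{k})$ and $F(v_{k})$ into the two pieces and to controlling the contribution of $w_{k}$.

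For (\ref{GM38a}) I would use that $E$ is a quadratic form $E(u)=B(u,u)$, with $B$ the symmetric bilinear form $B(u,w)=\int_{\mathbb{R}}\left((c_{s}-\epsilon)uw-(c_{s}\alpha-\eta)u_{x}w_{x}+(c_{s}\beta-\gamma)u_{xx}w_{xx}\right)dx$, so that $E(u_{n_{k}})=E(v_{k})+2B(v_{k},w_{k})+E(w_{k})$. The cross term $B(u_{k,1},u_{k,2})$ vanishes identically, because $u_{k,1},u_{k,2}$ and all of their derivatives have disjoint supports; hence $E(v_{k})=E(u_{k,1})+E(u_{k,2})$. Boundedness of the coefficients gives the Cauchy--Schwarz estimate $|B(v_{k},w_{k})|\leq C\|v_{k}\|_{2}\|w_{k}\|_{2}$, while the upper bound in (\ref{GM33}) gives $E(w_{k})\leq C_{2}\|w_{k}\|_{2}^{2}$; since $\{u_{n_{k}}\}$ is bounded in $H^{2}$ and the cutoffs have uniformly bounded derivatives, $\|v_{k}\|_{2}\leq C$, so both remainders are $O(\widetilde{\epsilon})$ by (\ref{GM37f}).

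For (\ref{GM38b}) the key observation is that, since $G$ is homogeneous of degree $q>2$ (so $G(0)=0$) and $u_{k,1},u_{k,2}$ have disjoint supports, the pointwise identity $G(u_{k,1})+G(u_{k,2})=G(v_{k})$ holds, whence $F(u_{k,1})+F(u_{k,2})=F(v_{k})$. It then remains to estimate $F(u_{n_{k}})-F(v_{k})=\int_{\mathbb{R}}\left(G(v_{k}+w_{k})-G(v_{k})\right)dx$. I would bound the integrand by the mean value theorem together with the homogeneity of $G'$ (degree $q-1$, so $|G'(s)|\leq C|s|^{q-1}$), obtaining $|G(v_{k}+w_{k})-G(v_{k})|\leq C|w_{k}|\,(|v_{k}|+|w_{k}|)^{q-1}$.

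The main obstacle is that $\mathrm{supp}\,w_{k}$ has measure $O(R_{k})\to\infty$, so one cannot simply estimate $\int|w_{k}|\,dx$ by an $H^{2}$ norm. The remedy is to absorb the excess powers through the Sobolev embedding $H^{2}\hookrightarrow L^{\infty}$: since $|v_{k}|+|w_{k}|\leq 2|u_{n_{k}}|\leq C$ uniformly, one has $(|v_{k}|+|w_{k}|)^{q-2}\leq C$, reducing the bound to $C\int_{\mathbb{R}}|w_{k}|(|v_{k}|+|w_{k}|)\,dx\leq C\left(\|w_{k}\|_{L^{2}}\|v_{k}\|_{L^{2}}+\|w_{k}\|_{L^{2}}^{2}\right)=O(\widetilde{\epsilon})$ by Cauchy--Schwarz and (\ref{GM37f}). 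The passage to subsequences only serves to keep these $O(\widetilde{\epsilon})$ constants uniform in $k$; combining the two estimates yields (\ref{GM38a})--(\ref{GM38b}).
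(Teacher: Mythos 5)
Your argument is correct, and for \eqref{GM38a} it is essentially the paper's proof: the same decomposition $u_{n_{k}}=u_{k,1}+u_{k,2}+w_{k}$, the vanishing of the cross term between $u_{k,1}$ and $u_{k,2}$ by disjointness of supports (valid once $R_{k}>2R$), and Cauchy--Schwarz plus \eqref{GM37f} to absorb everything involving $w_{k}$. For \eqref{GM38b} you take a genuinely different, and in fact cleaner, route. The paper exploits the exact homogeneity identity $G(\phi_{k}u_{n_{k}})=\phi_{k}^{q}G(u_{n_{k}})$ to rewrite $F(u_{n_{k}})-F(u_{k,1})-F(u_{k,2})$ as an integral weighted by $1-\phi_{k}^{q}-\varphi_{k}^{q}$ supported in the transition region, and controls it by $F(w_{k})\leq C\|w_{k}\|_{2}^{q}$; along the way it extracts a subsequence on which $F(u_{k,1})\to\theta$, and that limit $\theta$ is the object the subsequent case analysis (ruling out dichotomy) is built on. You instead use disjoint supports to get the exact additivity $F(u_{k,1})+F(u_{k,2})=F(v_{k})$ and then a mean-value estimate $|G(v_{k}+w_{k})-G(v_{k})|\leq C|w_{k}|(|v_{k}|+|w_{k}|)^{q-1}$, with the $L^{\infty}$ bound on $u_{n_{k}}$ absorbing the excess powers so that Cauchy--Schwarz applies; this only needs $|G'(s)|\leq C|s|^{q-1}$ rather than homogeneity, so it generalizes directly to nonlinearities that are sums of homogeneous terms, and it proves the estimates for the full sequence (large $k$) rather than a subsequence. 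The one thing your route does not deliver is the convergent subsequence with $F(u_{k,1})\to\theta$, which the paper's proof produces as a by-product and uses immediately afterwards; if you adopt your version you would need to extract that subsequence separately (it is immediate from boundedness of $F(u_{k,1})$, which you already have).
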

\begin{proof}
Since $u_{n_{k}}=w_{k}+u_{k,1}+u_{k,2}$ then we can write
$$E(u_{n_{k}})=E(w_{k})+E(u_{k,1})+E(u_{k,2})+N,$$ where
\begin{eqnarray*}
N&=&2A_{1}\int_{\mathbb{R}}w_{k}(u_{k,1}+u_{k,2})dx+2A_{2}\int_{\mathbb{R}}w'_{k}(u'_{k,1}+u'_{k,2})dx\\
&&+2A_{2}\int_{\mathbb{R}}w''_{k}(u''_{k,1}+u''_{k,2})dx,
\end{eqnarray*}
where
$$A_{1}={c_{s}-\epsilon},\quad A_{2}=-({c_{s}\alpha-\eta}),\quad A_{3}={c_{s}\beta-\gamma}.$$ Therefore, from Cauchy-Schwarz inequality we have
\begin{eqnarray*}
|N|\leq C||w_{k}||_{2}\left(||u_{k,1}||_{2}+||u_{k,2}||_{2}\right),
\end{eqnarray*}
for some constant $C$. Now, (\ref{GM37d}), (\ref{GM37e}), (\ref{fnls_2312}), and (\ref{GM37f}) imply that $N=O(\widetilde{\epsilon})$ and (\ref{GM38a}) holds. On the other hand, since $\{u_{n_{k}}\}_{k}$ is a minimizing sequence, using Lemma \ref{GMlemma31}(i) and (iii) along with the property $0\leq\phi\leq 1$, then $F(u_{k,1})$ is bounded, and there is a subsequence (denoted again by $u_{k,1}$) such that $F(u_{k,1})$ converges to some $\theta\in\mathbb{R}$. Thus, there is some $k_{0}>0$ such that for $k\geq k_{0}$
\begin{eqnarray*}
|F(u_{k,1})-\theta|\leq \widetilde{\epsilon}.
\end{eqnarray*}
Now, since $F(u_{n_{k}})=\lambda$, $\phi_{k}\varphi_{k}=0$, and $G$ is homogeneous of degree $q$, from (\ref{GM36b}) we have
\begin{eqnarray}
|F(u_{k,2})-(\lambda-\theta)|&\leq &\left|\int_{\mathbb{R}}\left(G(\varphi_{k}u_{n_{k}})-G(u_{n_{k}})+G(\phi_{k}u_{n_{k}})\right)dx\right|+\widetilde{\epsilon}\nonumber\\
&=&\left| \int_{\mathbb{R}}\left(\varphi_{k}^{q}+\phi_{k}^{q}-1\right)G(u_{n_{k}})dx\right|+\widetilde{\epsilon}\nonumber\\
&\leq &\int_{\mathbb{R}}\chi_{k}^{q}G(u_{n_{k}})dx+\widetilde{\epsilon}=F(w_{k})+\widetilde{\epsilon}\nonumber\\
&\leq &C||w_{k}||_{2}^{q}+\widetilde{\epsilon}=O(\widetilde{\epsilon}),\label{GM38c}
\end{eqnarray}
where in the last equality  (\ref{GM37f}) was used; then (\ref{GM38b}) follows.
\end{proof}
As a consequence of (\ref{GM38a}) we have
\begin{corolary}
Under the conditions of Lemma \ref{GMlemma33}
\begin{eqnarray}
I_{\lambda}&\geq &\lim_{k\rightarrow\infty}\inf E(u_{n_{k}})\nonumber\\
&\geq &\lim_{k\rightarrow\infty}\inf E(u_{k,1}) +\lim_{k\rightarrow\infty}\inf E(u_{k,2})+\widetilde{\epsilon}.\label{GM39}
\end{eqnarray}
\end{corolary}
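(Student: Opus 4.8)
The plan is to obtain (\ref{GM39}) directly by passing to the limit inferior in the splitting identity (\ref{GM38a}) of Lemma \ref{GMlemma33}, so that essentially no analysis beyond that lemma is required.

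First I would dispose of the first inequality. Since $\{u_{n_{k}}\}_{k}$ is a subsequence of the minimizing sequence for $I_{\lambda}$ and the constraint $F(u_{n_{k}})=\lambda$ is preserved, the definition of the infimum in (\ref{GM31}) forces $E(u_{n_{k}})\to I_{\lambda}$; hence $\liminf_{k\to\infty}E(u_{n_{k}})=I_{\lambda}$, and the first line of (\ref{GM39}) holds, in fact with equality.

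For the second inequality I would rewrite (\ref{GM38a}) in the form $E(u_{n_{k}})=E(u_{k,1})+E(u_{k,2})+r_{k}$ with $|r_{k}|\le C\widetilde{\epsilon}$, and take $\liminf_{k\to\infty}$ on both sides. The elementary fact I would invoke is the superadditivity of the limit inferior, $\liminf(a_{k}+b_{k}+r_{k})\ge \liminf a_{k}+\liminf b_{k}+\liminf r_{k}$, together with $\liminf r_{k}\ge -C\widetilde{\epsilon}=O(\widetilde{\epsilon})$. Applying this with $a_{k}=E(u_{k,1})$ and $b_{k}=E(u_{k,2})$ yields exactly (\ref{GM39}), where the additive $\widetilde{\epsilon}$ in the statement is to be read as the error term $O(\widetilde{\epsilon})$ inherited from $r_{k}$.

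The only point demanding a little care, rather than a genuine obstacle, is to guarantee that the three limits inferior involved are finite, so that the superadditivity estimate cannot collapse into an $\infty-\infty$ indeterminacy. This is secured by the two-sided control (\ref{GM33}): the lower bound gives $E(u_{k,j})\ge C_{1}||u_{k,j}||_{2}^{2}\ge 0$, while the $H^{2}$-boundedness of $\{u_{k,1}\}_{k}$ and $\{u_{k,2}\}_{k}$, which follows from Lemma \ref{GMlemma31}(ii) and the estimates (\ref{GM37d})--(\ref{GM37e}) of Lemma \ref{GMlemma32}, combined with the upper bound in (\ref{GM33}) makes each $E(u_{k,j})$ bounded above. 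Passing, if necessary, to the further subsequences already extracted in Lemma \ref{GMlemma33} turns these limits inferior into genuine limits, after which (\ref{GM39}) is immediate. This bound is precisely what will later contradict the strict subadditivity (\ref{GM34}) as $\widetilde{\epsilon}\to 0$, thereby ruling out dichotomy.
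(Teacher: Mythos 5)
Your proposal is correct and follows essentially the same route as the paper, which presents the corollary precisely as a direct consequence of the splitting identity (\ref{GM38a}); your passage to the limit inferior via superadditivity, with the $O(\widetilde{\epsilon})$ reading of the error term and the finiteness check from (\ref{GM33}), simply spells out the details the paper leaves implicit.
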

In order to rule out dichotomy, we discuss the existence of the limit obtained in the proof of Lemma \ref{GMlemma33}
$$\theta=\lim_{k\rightarrow\infty}F(u_{k,1}).$$
We have the following possibilities, \cite{AnguloS2020}:
\begin{itemize}
\item[(1)] $\theta=0$. Then, using (\ref{GM38c}) we have, for $k$ large enough and $\widetilde{\epsilon}<\lambda/2$
%\begin{eqnarray*}
%|F(h_{k,2})-\lambda|<\epsilon.
%\end{eqnarray*}
%If we tale $\epsilon<\lambda/2$ then
\begin{eqnarray}
F(u_{k,2})>\lambda-\widetilde{\epsilon}>\frac{\lambda}{2}>0.\label{fnls_2320b}
\end{eqnarray}
We consider $d_{k}>0$ such that $\lambda=F(d_{k}h_{k,2})$; explicitly
%Thus, if we consider 
$$d_{k}=\left(\frac{\lambda}{F(u_{k,2})}\right)^{\frac{1}{q}},$$ Then
%then $\lambda=F(d_{k}h_{k,2})$ and
\begin{eqnarray*}
|d_{k}-1|<\left|\left(\frac{\lambda}{F(u_{k,2})}\right)^{\frac{1}{q}}-1\right|<\frac{1}{F(u_{k,2})^{1/q}}\left|\lambda^{\frac{1}{q}}-F(h_{k,2})^{\frac{1}{q}}\right|.\label{fnls_2321}
\end{eqnarray*}
For $x>0$ let $r(x)=x^{\frac{1}{q}}$. Then
\begin{eqnarray*}
\left|\lambda^{\frac{1}{q}}-F(u_{k,2})^{\frac{1}{q}}\right|&=&\left|r(\lambda)-r(F(h_{k,2}))\right|\\
&=&\left|r'(\xi_{k})(\lambda-F(u_{k,2}))\right|,
\end{eqnarray*}
for some $\xi_{k}$ between $F(u_{k,2})$ and $\lambda$ and consequently, by (\ref{fnls_2320b}), between $\lambda/2$ and $\lambda$. Therefore
\begin{eqnarray*}
r'(\xi_{k})=\frac{1}{q}\left(\frac{1}{\xi_{k}}\right)^{\frac{1}{q}-1}<\frac{1}{q}\left(\frac{2}{\lambda}\right)^{\frac{1}{q}-1}.\label{fnls_2322}
\end{eqnarray*}
This yields, for some constant $C(\lambda)$
%Inserting (\ref{fnls_2322}) into (\ref{fnls_2321}) yields
\begin{eqnarray*}
|d_{k}-1|<\left(\frac{2}{\lambda}\right)^{1/q}\left|\lambda-f(u_{k,2})\right|=O(\widetilde{\epsilon}).
\end{eqnarray*}
Then $\lim_{k\rightarrow\infty}d_{k}=1$ and
\begin{eqnarray}
I_{\lambda}\leq E(d_{k}u_{k,2})=d_{k}^{2}E(u_{k,2})=E(u_{k,2})+O(\widetilde{\epsilon}).\label{fnls_2323}
\end{eqnarray}
Therefore, from coercivity property of $E$, dichotomy, and (\ref{GM37d})
%Now, from Proposition \ref{propos25}(i), (\ref{fnls_2319}), and (\ref{fnls_2312}), we have 
\begin{eqnarray*}
\lim_{k\rightarrow\infty}\inf E(u_{k,1})&\geq &C\lim_{k\rightarrow\infty}\inf ||u_{k,1}||_{2}^{2}\\
&\geq & C\lim_{k\rightarrow\infty}\inf ||\rho_{k,1}||_{L^{1}}+O(\widetilde{\epsilon})\geq C\theta_{0}+O(\widetilde{\epsilon}).
\end{eqnarray*}
Therefore, (\ref{GM39}) and (\ref{fnls_2323}) imply, for $\widetilde{\epsilon}>0$ arbitrarily small
\begin{eqnarray*}
I_{\lambda}\geq C\theta_{0}+I_{\lambda}+O(\widetilde{\epsilon}),
\end{eqnarray*}
that taking $\widetilde{\epsilon}\rightarrow 0$
 leads to $I_{\lambda}\geq C\theta_{0}+I_{\lambda}$, which is a the contradiction with $\theta_{0}\leq 0$.
\item[(2)] $\lambda>\theta>0$. Then, from (\ref{GM38a})
\begin{eqnarray*}
E(u_{n_{k}})&=&E(u_{k,1})+E(u_{k,2})+O(\widetilde{\epsilon})\\
&\geq & I_{F(u_{k,1})}+I_{F(u_{k,2})}+O(\widetilde{\epsilon})\\
&=&\left(F(u_{k,1})^{\frac{2}{q}}+F(u_{k,2})^{\frac{2}{q}}\right)I_{1}+O(\widetilde{\epsilon}).
\end{eqnarray*}
If $k\rightarrow\infty$ then
\begin{eqnarray*}
I_{\lambda}&\geq &\left(\theta^{\frac{2}{q}}+(\lambda-\theta)^{\frac{2}{q}}\right)I_{1}+O(\widetilde{\epsilon})=I_{\theta}+I_{\lambda-\theta}+O(\widetilde{\epsilon}),
\end{eqnarray*}
for $\widetilde{\epsilon}>0$ arbitrarily small. This contradicts Lemma \ref{GMlemma31}(ii).
\item[(3)] $\theta<0$. Then, from  (\ref{GM38c})
\begin{eqnarray*}
\lim_{k\rightarrow\infty}F(h_{k,2})=\lambda-\theta>\frac{\lambda}{2},
\end{eqnarray*}
and we apply the analysis of item (1) leading to contradiction.
\item[(4)] $\theta=\lambda$. Then, 
as in item (1), we may take $\widetilde{\epsilon}$ small enough so that $F(h_{k,1})>\lambda/2$ for $k$ large. The same arguments apply to have
\begin{eqnarray*}
I_{\lambda}\leq E(u_{k,1})+O(\widetilde{\epsilon}),
\end{eqnarray*}
and
\begin{eqnarray*}
\lim_{k\rightarrow\infty}\inf E(u_{k,2})&\geq &C\lim_{k\rightarrow\infty}\inf ||u_{k,2}||_{2}^{2}\\
&\geq & C\lim_{k\rightarrow\infty}\inf ||\rho_{k,2}||_{L^{1}}+O(\widetilde{\epsilon})\geq C(\sigma-\theta_{0})+O(\widetilde{\epsilon}).
\end{eqnarray*}
This leads, for $k\rightarrow\infty, \widetilde{\epsilon}\rightarrow 0$ 
\begin{eqnarray*}
I_{\lambda}\geq C(\sigma-\theta_{0})+I_{\lambda},
\end{eqnarray*}
implying the contradiction $\sigma-\theta_{0}\leq 0$. 
\item[(5)] $\theta>\lambda$. Then $F(h_{k,1})>0$ for $k$ large enough. We consider $e_{k}$ such that $F(e_{k}u_{k,1})=\lambda$. Explicitly
\begin{eqnarray*}
e_{k}=\left(\frac{\lambda}{F(u_{k,1})}\right)^{\frac{2}{q}}.
\end{eqnarray*}
Then
$$\lim_{k\rightarrow\infty}e_{k}=\left(\frac{\lambda}{\theta}\right)^{\frac{2}{q}}.$$ Therefore, for $k$ large enough, we have
\begin{eqnarray}
I_{\lambda}\leq E(e_{k}u_{k,1})=e_{k}^{2}E(u_{k,1})<E(u_{k,1}).\label{fnls_2324a}
\end{eqnarray}
From coercivity property of $E$, dichotomy, and (\ref{GM37e})
\begin{eqnarray}
\lim_{k\rightarrow\infty}\inf E(u_{k,2})&\geq &C\lim_{k\rightarrow\infty}\inf ||u_{k,2}||_{2}^{2}
\geq  C\lim_{k\rightarrow\infty}\inf ||\rho_{k,2}||_{L^{1}}+O(\widetilde{\epsilon})\nonumber\\
&\geq &C(\sigma-\theta_{0})+O(\widetilde{\epsilon}).\label{fnls_2324b}
\end{eqnarray}
Thus,  (\ref{GM39}), (\ref{fnls_2324a}), and (\ref{fnls_2324b}) imply
\begin{eqnarray*}
I_{\lambda}\geq I_{\lambda}+C(\sigma-\theta_{0})+O(\widetilde{\epsilon}).
\end{eqnarray*}
When $\widetilde{\epsilon}\rightarrow 0$, we have the contradiction $\sigma-\theta_{0}\leq 0$.
\end{itemize}
Since vanishing and dichotomy do not hold, then Lemma 1.1 of \cite{Lions} implies that necessarily compactness is satisfied. This means that there exists $\{y_{k}\}_{k}\subset\mathbb{R}$ such that for any $\widetilde{\epsilon}>0$ there is $R=R(\widetilde{\epsilon})>0$ large and $k_{0}>$ such that for $k\geq k_{0}$
\begin{eqnarray}
\int_{|x-y_{k}|\leq R}\rho_{n_{k}}dx\geq \sigma-\widetilde{\epsilon},\quad 
\int_{|x-y_{k}|\geq R}\rho_{n_{k}}dx=O(\widetilde{\epsilon}).\label{GM310}
\end{eqnarray}
From the arguments of Lemma \ref{GMlemma31} (i), applied to $P_{k}=\{|x-y_{k}|\geq R\}$,  we have
\begin{eqnarray*}
\int_{P_{k}}G(u_{n_{k}})dx&\leq & C||u_{n_{k}}||_{L^{\infty}}^{q-2}\int_{P_{k}}u_{n_{k}}^{2}dx\\
&\leq & C||u_{n_{k}}||_{2}^{q-2}\int_{P_{k}}\rho_{n_{k}}dx=O(\widetilde{\epsilon}),
\end{eqnarray*}
where in the last inequality Lemma \ref{GMlemma31}(iii) and (\ref{GM310}) were used. Therefore
\begin{eqnarray}
\left|\int_{|x-y_{k}|\leq R}G(u_{n_{k}})dx-\lambda\right|\leq \widetilde{\epsilon}.\label{GM311}
\end{eqnarray}
Let $\widetilde{u}_{n_{k}}(x)=u_{n_{k}}(x-y_{k})$. Then $\widetilde{u}_{n_{k}}$ is bounded in $H^{2}$ and therefore there exists a subsequence $\widetilde{u}_{n_{k}}$ which converges weakly in $H^{2}$ to some $\widetilde{u}$. From (\ref{GM311})
\begin{eqnarray*}
\lambda\geq \int_{-R}^{R}G(\widetilde{u}_{n_{k}})dx\geq \lambda-\widetilde{\epsilon}.
\end{eqnarray*}
The compact embedding of $H^{2}(-R,R)$ in $W^{1,p}(-R,R)\;  \forall p\geq 1$ and Lemma 2.2 of \cite{Levandosky1999} imply
\begin{eqnarray*}
\lambda\geq \int_{-R}^{R}G(\widetilde{u})dx\geq \lambda-\widetilde{\epsilon}.
\end{eqnarray*}
Taking $\widetilde{\epsilon}\rightarrow 0$ then $R=R(\widetilde{\epsilon})\rightarrow\infty$ and therefore
$F(\widetilde{u})=\lambda$. Furthermore, from lower semicontinuity and invariance by translations of $E$ we have
\begin{eqnarray*}
I_{\lambda}=\lim_{k\rightarrow\infty}\inf E(\widetilde{u}_{n_{k}})\geq E(\widetilde{u})\geq I_{\lambda}.
\end{eqnarray*}
Therefore $\widetilde{u}$ satisfies the variational problem
$$\delta E(\widetilde{u})=\kappa \delta F(\widetilde{u}),$$ with $\kappa$ a Lagrange multiplier. This means
\begin{eqnarray}
(\gamma-\beta c_{s})\widetilde{u}''''+(\eta-\alpha c_{s})\widetilde{u}''+(\epsilon-c_{s})u+\kappa g(\widetilde{u})=0.\label{GM312}
\end{eqnarray}
Using the Euler theorem to the homogeneous function $G$, then
$$G'(\widetilde{u})\widetilde{u}=qG(\widetilde{u}),$$ and multiplying (\ref{GM312}) by $\widetilde{u}$ and integrating yield
\begin{eqnarray*}
q\kappa \lambda=E(\widetilde{u})=I_{\lambda}\Rightarrow \kappa=\frac{I_{\lambda}}{q\lambda}>0.
\end{eqnarray*}
Defining
$$u=\kappa^{\frac{1}{q-1}}\widetilde{u},$$ then $u$ is a solution of (\ref{GM14}). This completes the proof of the first part of the following theorem.
\begin{theorem}
\label{GMtheor34} Under the assumptions (\ref{linearw}), (\ref{GM30}), (\ref{A*2}), there is a solution $u\in H^{u}$ of (\ref{GM14}). Furthermore if $g(u)\in H^{s-2}$ when $u\in H^{s}, s>0$, 
%\begin{itemize}
%\item[(H1)] The speed $c_{s}$ satisfies
%\begin{eqnarray*}
%(\alpha c_{s}-\eta)^{2}<4(\beta c_{s}-\gamma)(c_{s}-\epsilon),\label{GM315a}
%\end{eqnarray*}
%\item [(H2)] $g(u)\in H^{s-2}$ when $u\in H^{s}, s>0$, 
%\end{itemize}
then
$u\in H^{\infty}$.
\end{theorem}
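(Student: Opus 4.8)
The existence assertion has already been obtained in the discussion preceding the statement: the minimizer $\widetilde{u}\in H^{2}$ of (\ref{GM31}) yields, after rescaling by $\kappa^{1/(q-1)}$, a solution $u\in H^{2}$ of (\ref{GM14}) (so the \lq\lq $H^{u}$\rq\rq\ in the statement should read $H^{2}$). The plan is therefore to establish only the regularity claim $u\in H^{\infty}$, by a standard elliptic bootstrap on the profile equation (\ref{GM14}).

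First I would recast (\ref{GM14}) as $L_{s}u=-g(u)$, where $L_{s}$ is the constant-coefficient fourth-order operator $(\gamma-\beta c_{s})\partial_{x}^{4}+(\eta-\alpha c_{s})\partial_{x}^{2}+(\epsilon-c_{s})$, with Fourier symbol $m(k)=(\gamma-\beta c_{s})k^{4}+(c_{s}\alpha-\eta)k^{2}+(\epsilon-c_{s})$. The key observation is that $m(k)=-S_{E}(k)$, where $S_{E}(k)=(c_{s}-\epsilon)-(c_{s}\alpha-\eta)k^{2}+(c_{s}\beta-\gamma)k^{4}$ is the Fourier multiplier associated with the quadratic form $E$ in (\ref{GM32a}). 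Under (\ref{A*2}), Proposition \ref{proA} gives the two-sided bound (\ref{GM33}); written in Fourier variables, and using $\|u\|_{2}^{2}\asymp\int(1+k^{2})^{2}|\widehat{u}(k)|^{2}\,dk$, this forces the pointwise estimate $C_{1}(1+k^{2})^{2}\le S_{E}(k)\le C_{2}(1+k^{2})^{2}$ for every $k\in\mathbb{R}$, since $S_{E}$ is a continuous polynomial and $\widehat{u}$ may be concentrated near any single frequency. Hence $|m(k)|\ge C_{1}(1+k^{2})^{2}$, so $L_{s}:H^{\sigma+4}\to H^{\sigma}$ is an isomorphism for every $\sigma\in\mathbb{R}$, and $L_{s}^{-1}$ gains exactly four derivatives: $\|L_{s}^{-1}v\|_{\sigma+4}\le C\|v\|_{\sigma}$.

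With this smoothing in hand, the bootstrap runs as follows. Writing $u=-L_{s}^{-1}g(u)$, suppose $u\in H^{\sigma}$ with $\sigma\ge 2$. By hypothesis $g(u)\in H^{\sigma-2}$, and applying $L_{s}^{-1}$ raises the index by four, so $u\in H^{\sigma+2}$, a net gain of two derivatives per step. Starting from $u\in H^{2}$ and iterating, one obtains $u\in H^{2+2n}$ for every $n\ge 0$; since $\bigcap_{n\ge 0}H^{2+2n}=H^{\infty}$, the regularity conclusion follows.

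The only genuinely delicate point is the passage from the integral inequality (\ref{GM33}) to the pointwise symbol bound used to invert $L_{s}$; everything after that is the routine induction above. I would therefore devote the bulk of the write-up to justifying $C_{1}(1+k^{2})^{2}\le S_{E}(k)\le C_{2}(1+k^{2})^{2}$ and the resulting mapping property of $L_{s}^{-1}$, and treat the bootstrap itself briefly. Note that the hypothesis $g:H^{s}\to H^{s-2}$ is precisely what makes the per-step gain positive; for the pure-power nonlinearities of (P2) one in fact has $g:H^{s}\to H^{s}$ for $s>1/2$, so the same argument applies with an even larger gain.
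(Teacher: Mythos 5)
Your proposal is correct and follows essentially the same route as the paper: both rewrite (\ref{GM14}) as the inversion of the constant-coefficient fourth-order operator whose symbol is $-S_{E}(k)$, and then bootstrap, gaining four derivatives from the inverse and losing two from $g$ at each step. The only cosmetic difference is that the paper obtains positivity of the symbol directly from the positive definiteness of the matrix $A$ of Proposition \ref{proA} (the determinant condition $(\alpha c_{s}-\eta)^{2}<4(\beta c_{s}-\gamma)(c_{s}-\epsilon)$ together with the positivity of the leading coefficient), whereas you recover the equivalent pointwise bound $S_{E}(k)\geq C_{1}(1+k^{2})^{2}$ from the integral coercivity estimate (\ref{GM33}) by frequency localization; both rest on the same proposition.
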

\begin{proof}
The last statement is proved from writing (\ref{GM14}) in the form
\begin{eqnarray}
Q\varphi=g(\varphi),\label{GM315}
\end{eqnarray}
where $Q$ is the linear operator with Fourier symbol
\begin{eqnarray*}
\widehat{Qu}(k)=\left((\beta c_{s}-\gamma)k^{4}-(\alpha c_{s}-\eta)k^{2}+(c_{s}-\epsilon)\right)\widehat{u}(k),\quad k\in\mathbb{R}.
\end{eqnarray*}
Note that from Proposition \ref{proA}, the determinant of the matrix $A$ in (\ref{A*4}) is positive, leading to
\begin{eqnarray*}
(\alpha c_{s}-\eta)^{2}<4(\beta c_{s}-\gamma)(c_{s}-\epsilon).\label{GM315a}
\end{eqnarray*}
This implies that $Q$ is invertible. The hypothesis on $g$ and 
(\ref{GM315}) yield $u\in H^{4}$. A bootstrap argument applied to (\ref{GM315}) proves the result.
\end{proof}

\subsection{Some examples}
In this section Theorem \ref{GMtheor34} will be applied in order to prove the existence of CSW solutions of several families of Rosenau-type equations (\ref{GM1}). The application mainly depends on the coercivity property (\ref{GM33}) of the corresponding functional (\ref{GM32a}). We will consider nonlinear terms of the form $g(u)=\frac{u^{p+1}}{p+1}, p\geq 1$, for which
$$G(u)=\frac{u^{p+2}}{(p+2)(p+1)},$$ and consequently $G$ is homogeneous of degree $q=p+2$. Thus
$$F(u)=\int_{\mathbb{R}}\frac{u^{p+2}}{(p+2)(p+1)}dx.$$
%Linear well-posedness and (\ref{GM30}) are assumed as hypotheses.
The corresponding conditions on the speed $c_{s}$ for coercivity property will be derived in each case from the characterization given in Proposition \ref{proA}. In all the cases, the CSW's predicted from CCT are non monotone and were partially anticipated by NFT.
\subsubsection{Rosenau equation}
Rosenau equation corresponds to (\ref{GM1}) with $\alpha=\eta=\gamma=0, \epsilon, \beta>0$. Then
\begin{eqnarray*}
E(u)=\int_{\mathbb{R}}\left({(c_{s}-\epsilon)}u^{2}+{(c_{s}\beta)}u_{xx}^{2}\right)dx.
\end{eqnarray*}
In this case the roots in (\ref{A*1}) are $x_{+}=0, x_{-}=-\epsilon$. Therefore, (\ref{A*2}) is satisfied for $c_{s}>0$ only when $c_{s}-\epsilon>0$. In this case the application of NFT is special, since from (\ref{GM14c}), 
$a=a(c_{s})=\frac{c_{s}-\epsilon}{\beta c_{s}},\quad
b=0,$ and the eigenvalues of the linearization are the roots of $\lambda^{4}+a=0$. The existence of (nonmonotone) CSW's for speeds $c_{s}>\epsilon$ with $c_{s}-\epsilon$ small can be established (region 1 close to $\mathcal{C}_{3}$ in Figure \ref{GMfig1}). The existence result obtained from Concentration-Compactness theory is valid for $c_{s}>\epsilon$ with no restriction on the size of $c_{s}-\epsilon$. The numerical generation of some of the profiles is illustrated in Figure \ref{GMfig10}. For $c_{s}<\epsilon$ (region 3 with $b=0$) PTW's were found experimentally.
\begin{figure}[htbp]
\centering
\subfigure[]
{\includegraphics[width=6.27cm,height=5cm]{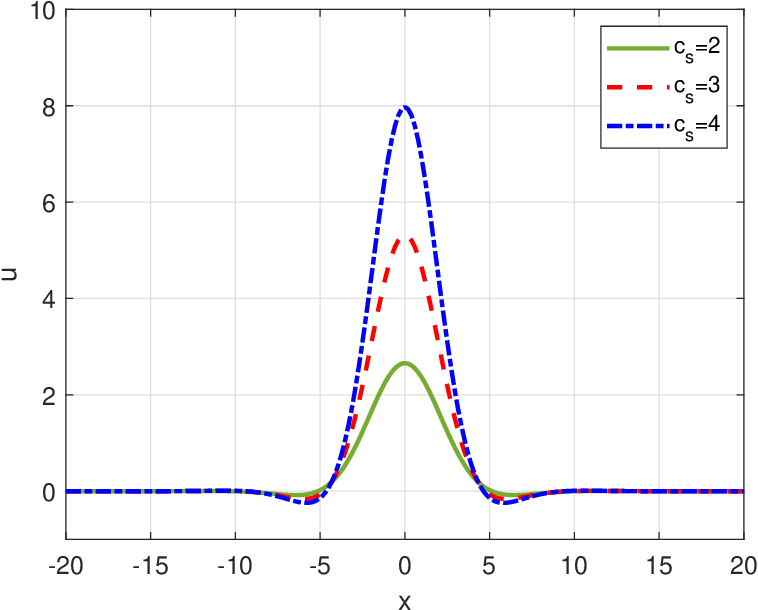}}
\subfigure[]
{\includegraphics[width=6.27cm,height=5cm]{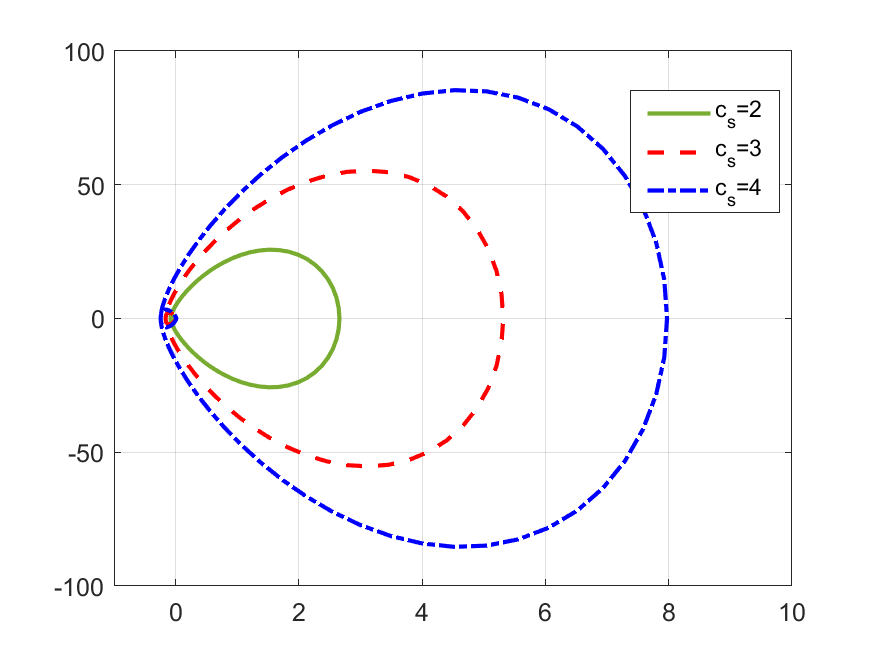}}
%\subfigure[]
%{\includegraphics[width=6.27cm,height=5cm]{NMCSW_fig2a.eps}}
%\subfigure[]
%{\includegraphics[width=6.27cm,height=5cm]{NMCSW_fig2b.eps}}
%\subfigure[]
%{\includegraphics[width=6.27cm,height=5cm]{NMCSW_fig3a.eps}}
%\subfigure[]
%{\includegraphics[width=6.27cm,height=5cm]{NMCSW_fig3b.eps}}
%\subfigure[]
%{\includegraphics[width=6.27cm]{case_a6_c3.eps}}
\caption{Generation of nonmonotone CSW's, $u$ profiles and phase portraits. Rosenau equation with $\epsilon=\beta=1, g(u)=u^{2}/2$ and for several speeds $c_{s}>\epsilon$.}
\label{GMfig10}
\end{figure}

It may be worth mentioning that when $g(u)$ is a sum of homogeneous functions, the application of Concentration-Compactness theory to (\ref{GM1}) can follow the steps developed in \cite{EsfahaniL2021}. For the Rosenau equation, the corresponding result would ensure the existence of solitary waves under the same restriction $c_{s}>\epsilon$ on the speeds. This is illustrated in Figure \ref{GMfig11} for $g(u)=u^{3}/3+u^{5}/5$.
\begin{figure}[htbp]
\centering
\subfigure[]
{\includegraphics[width=6.27cm,height=5cm]{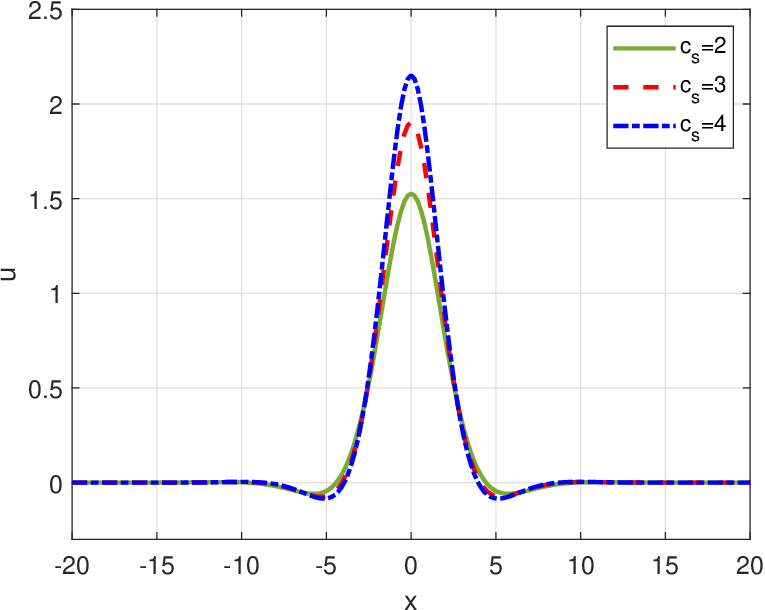}}
\subfigure[]
{\includegraphics[width=6.27cm,height=5cm]{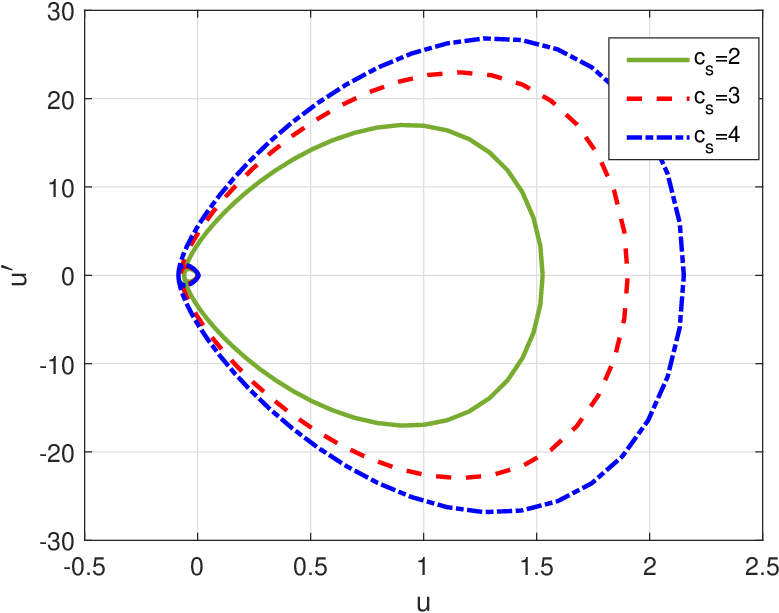}}
%\subfigure[]
%{\includegraphics[width=6.27cm,height=5cm]{NMCSW_fig2a.eps}}
%\subfigure[]
%{\includegraphics[width=6.27cm,height=5cm]{NMCSW_fig2b.eps}}
%\subfigure[]
%{\includegraphics[width=6.27cm,height=5cm]{NMCSW_fig3a.eps}}
%\subfigure[]
%{\includegraphics[width=6.27cm,height=5cm]{NMCSW_fig3b.eps}}
%\subfigure[]
%{\includegraphics[width=6.27cm]{case_a6_c3.eps}}
\caption{Generation of nonmonotone CSW's, $u$ profiles and phase portraits. Rosenau equation with $\epsilon=\beta=1, g(u)=u^{3}/3+u^{5}/5$ and for several speeds $c_{s}>\epsilon$.}
\label{GMfig11}
\end{figure}

In both Figures \ref{GMfig10} and \ref{GMfig11}, we observe that the amplitude of the waves is an increasing function of the speed and, from the phase portraits, the profiles decay to zero exponentially as $|X|\rightarrow\infty$ in an oscillatory way.
\subsubsection{Rosenau-RLW equation}
Rosenau-RLW equation corresponds to (\ref{GM1}) with $\eta=\gamma=0, \epsilon, \beta>0$. Then
\begin{eqnarray*}
E(u)=\int_{\mathbb{R}}\left((c_{s}-\epsilon)u^{2}-\alpha c_{s}u_{x}^{2}+{(c_{s}\beta)}u_{xx}^{2}\right)dx.
\end{eqnarray*}
In this case $$x_{+}=\frac{\alpha^{2}\epsilon}{4\beta-\alpha^{2}}, \quad x_{-}=-\epsilon,$$ and for speeds $c_{s}>0$ (\ref{A*2}) holds when 
$$c_{s}-\epsilon>\frac{\alpha^{2}\epsilon}{4\beta-\alpha^{2}},$$ which was partially anticipated by NFT (cf. Table \ref{GMtav1}) but now $\alpha\in\mathbb{R}$ and the proximity between the two values $c_{s}-\epsilon$ and $\frac{\alpha^{2}\epsilon}{4\beta-\alpha^{2}}$ is not required.
%We may distinguish two cases:
%\begin{itemize}
%\item If $\alpha>0$ then we integrate by parts the term $u_{x}^{2}$, use the inequality
%$$xy\geq -\frac{x^{2}}{4\mu^{2}}-\mu^{2}y^{2},$$
% and
%take $\mu^{2}<\beta/\alpha$ to have
%\begin{eqnarray*}
%E(u)&=&\int_{\mathbb{R}}\left(\frac{(c_{s}-\epsilon)}{2}u+\alpha c_{s}uu_{xx}+\frac{(c_{s}\beta)}{2}u_{xx}^{2}\right)dx\\
%&\geq & \frac{1}{2}\int_{\mathbb{R}}\left((c_{s}-\epsilon)u^{2}+c_{s}\beta u_{xx}^{2}\right)dx-
%\frac{1}{2}\int_{\mathbb{R}}\alpha c_{s}\left(\frac{u^{2}}{4\mu^{2}}+\mu^{2}u_{xx}^{2}\right)dx\\
%&=&\frac{1}{2}\int_{\mathbb{R}}\left(\left((c_{s}-\epsilon)-\frac{\alpha c_{s}}{4\mu^{2}}\right)u^{2}+
%(c_{s}\beta-\alpha c_{s}\mu^{2})u_{xx}^{2}\right)dx.
%\end{eqnarray*}
%Then (\ref{GM33}) holds when 
%$$c_{s}-\epsilon>\frac{\epsilon\alpha^{2}}{4\beta-\alpha^{2}},$$ which was anticipated by NFT (cf. Appendix \ref{GMapp1}) but now the proximity between the two sides of the inequality is not required.
%\item If $\alpha<0$, then it looks clear that the condition $c_{s}>\epsilon$ (with no restriction on the size) implies coercivity of $E$ (cf. Appendix \ref{GMapp1} again).
%\end{itemize}
\subsubsection{Rosenau-KdV equation}
Rosenau-KdV equation corresponds to (\ref{GM1}) with $\alpha=\gamma=0, \epsilon, \beta>0$. Then
\begin{eqnarray*}
E(u)=\int_{\mathbb{R}}\left({(c_{s}-\epsilon)}u^{2}+\eta u_{x}^{2}+{(c_{s}\beta)}u_{xx}^{2}\right)dx.
\end{eqnarray*}
In this case $$x_{\pm}=\frac{1}{2}\left(-\epsilon\pm\sqrt{\epsilon^{2}+\frac{\eta^{2}}{\beta}}\right),$$ and (\ref{A*2}) holds when 
$c_{s}-\epsilon>x_{+}$ or $c_{s}-\epsilon<x_{-}$. Note that $x_{-}<-\epsilon$, so for $c_{s}>0$ only the first condition is possible, and this is the same as that in Table \ref{GMtav2} without the restrictions on the sign of $\eta$ and on the size of the difference between $c_{s}-\epsilon$ and $x_{+}$.
%The discussion is similar to that above. We may distinguish two cases:
%\begin{itemize}
%\item If $\eta<0$ then we integrate by parts the term $u_{x}^{2}$, use the inequality
%$$-\eta uu_{xx}\geq \eta\left(\frac{u^{2}}{4\mu^{2}}+\mu^{2}u_{xx}^{2}\right),$$
% and
%take $\mu^{2}<\beta c_{s}/|\eta|$ to write
%\begin{eqnarray*}
%E(u)&=&\int_{\mathbb{R}}\left(\frac{(c_{s}-\epsilon)}{2}u-\eta uu_{xx}+\frac{(c_{s}\beta)}{2}u_{xx}^{2}\right)dx\\
%&\geq & \frac{1}{2}\int_{\mathbb{R}}\left((c_{s}-\epsilon)u^{2}+c_{s}\beta u_{xx}^{2}\right)dx+
%\frac{1}{2}\int_{\mathbb{R}}\eta\left(\frac{u^{2}}{4\mu^{2}}+\mu^{2}u_{xx}^{2}\right)dx\\
%&=&\frac{1}{2}\int_{\mathbb{R}}\left(\left((c_{s}-\epsilon)+\frac{\eta}{4\mu^{2}}\right)u^{2}+
%(c_{s}\beta+\eta\mu^{2})u_{xx}^{2}\right)dx.
%\end{eqnarray*}
%Then (\ref{GM33}) holds when 
%$$4\beta c_{s}(c_{s}-\epsilon)>\eta^{2},$$ which coincides with the results given by NFT (cf. Appendix \ref{GMapp1}) but with no restriction on the sizes.
%\item If $\eta>0$, then it looks clear that the condition $c_{s}>\epsilon$ (with no restriction on the size) is the only requirement for coercivity of $E$ (cf. Appendix \ref{GMapp1} again).
%\end{itemize}
\subsubsection{Rosenau-Kawahara equation}
Rosenau-Kawahara equation corresponds to (\ref{GM1}) with $\alpha=0, \epsilon, \beta>0$. Then
\begin{eqnarray*}
E(u)=\int_{\mathbb{R}}\left({(c_{s}-\epsilon)}u^{2}+\eta u_{x}^{2}+{(c_{s}\beta-\gamma)}u_{xx}^{2}\right)dx.
\end{eqnarray*}
We may write $E$ in the form
\begin{eqnarray*}
E(u)=\int_{\mathbb{R}}\left({(c_{s}-\epsilon)}{2}u^{2}+\eta u_{x}^{2}+{((c_{s}-\epsilon)\beta+\rho)}u_{xx}^{2}\right)dx,
\end{eqnarray*}
with $\rho=\epsilon\beta-\gamma$. In this case
$$x_{\pm}=y_{\pm}=\frac{1}{2}\left(-\frac{\lambda}{\beta}+\sqrt{\left(\frac{\lambda}{\beta}\right)^{2}+\frac{\eta^{2}}{\beta}}\right),$$ where $y_{\pm}$ are defined in Appendix \ref{GMapp1}. Hence, as before, CCT gives the same conditions on the speed $c_{s}$ to ensure the existence of CSW as some given by NFT, cf. Table \ref{GMtav3}, for the case of nonmonotone CSW's, now independently of the sign of $\eta$ and $\rho$, and of the difference between $c_{s}-\epsilon$ and $x_{\pm}$.
%
%
%
%and the only difference is the elimination of the restriction on the proximity of the values, required by the local character of NFT.
\subsubsection{Rosenau-RLW-Kawahara equation}
Rosenau-RLW-Kawahara equation corresponds to (\ref{GM1}) with $\alpha=\gamma=-1, \epsilon=\beta=1, \eta>0$. Then
\begin{eqnarray*}
E(u)=\int_{\mathbb{R}}\left({(c_{s}-\epsilon)}u^{2}+(\eta+c_{s}) u_{x}^{2}+{(c_{s}\beta+1)}u_{xx}^{2}\right)dx.
\end{eqnarray*}
Now
\begin{eqnarray*}
x_{\pm}=z_{\pm}=\frac{1}{2}\left(\frac{2(\eta-3)}{3}\pm\sqrt{\left(\frac{2(\eta-3)}{3}\right)^{2}+\frac{4(1+\eta)^{2}}{3}}\right),
\end{eqnarray*}
(cf. Appendix \ref{GMapp1}). Since $x_{-}<-\epsilon$ then, for $c_{s}>0$ (\ref{A*2}) holds only when $c_{s}-\epsilon>x_{+}$, and the existence of nonmonotone CSW's can be ensured in the corresponding region 1 (right) of Figure \ref{GMfig1} but not necessarily close to the curve $\mathcal{C}_{3}$. By way of illustration, approximations of some of these profiles are shown in Figure \ref{GMfig12a}.

\begin{figure}[htbp]
\centering
\subfigure[]
{\includegraphics[width=6.27cm,height=5cm]{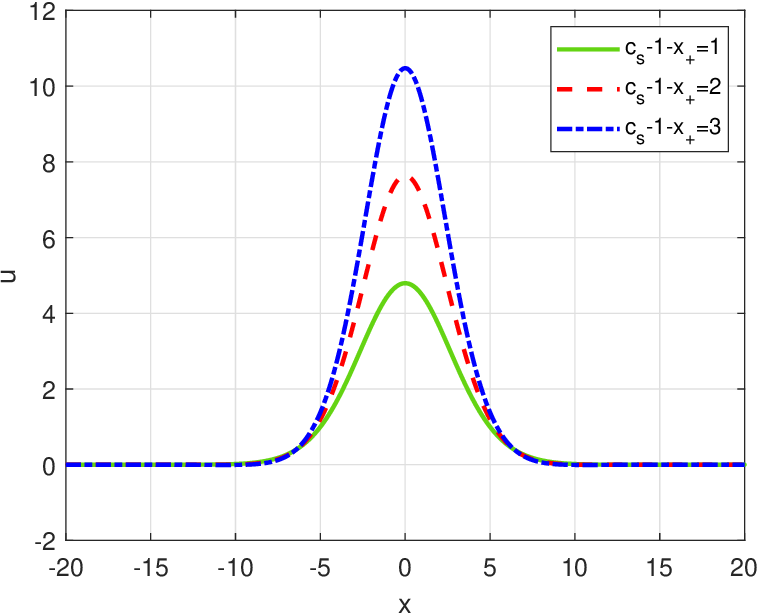}}
\subfigure[]
{\includegraphics[width=6.27cm,height=5cm]{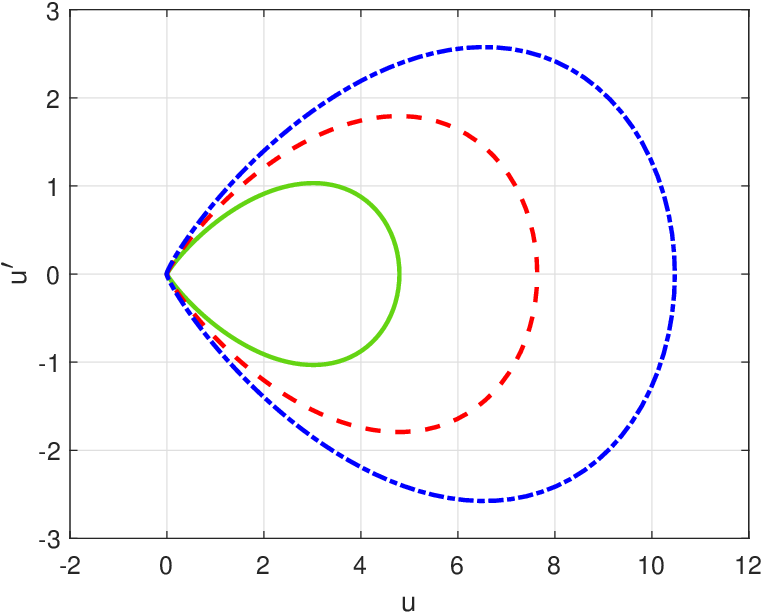}}
\subfigure[]
{\includegraphics[width=6.27cm,height=5cm]{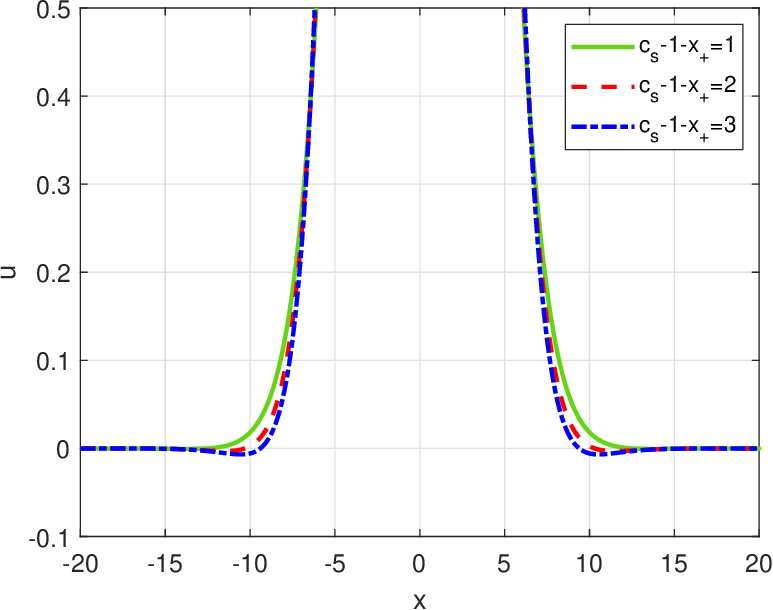}}
\subfigure[]
{\includegraphics[width=6.27cm,height=5cm]{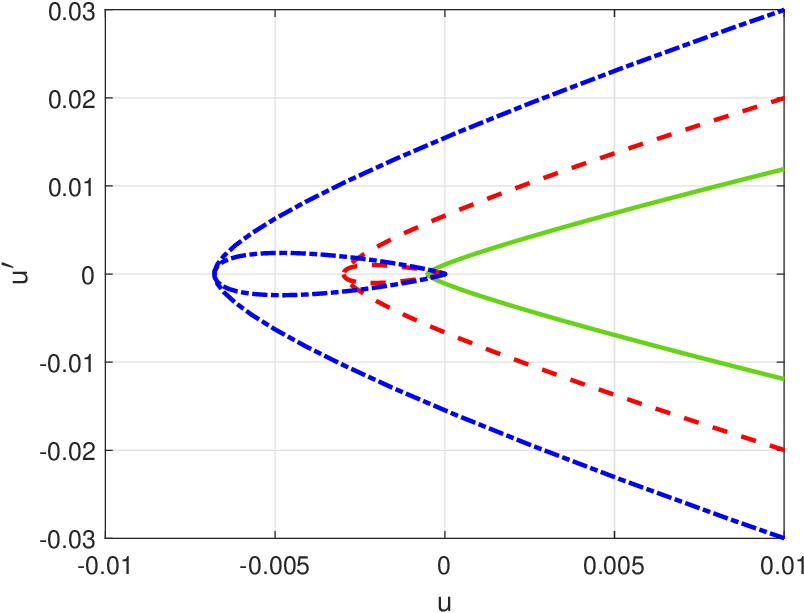}}
%\subfigure[]
%{\includegraphics[width=6.27cm,height=5cm]{R-RLW-KCSW_fig3a.eps}}
%\subfigure[]
%{\includegraphics[width=6.27cm,height=5cm]{R-RLW-KCSW_fig3b.eps}}
%\subfigure[]
%{\includegraphics[width=6.27cm]{case_a6_c3.eps}}
\caption{Generation of NMCSW's, $u$ profiles and phase portraits. Rosenau-RLW-Kawahara equation equation with $\eta=1, g(u)=u^{2}/2$ and several speeds $c_{s}$; (c) and (d) are magnifications of (a) and (b) resp.}
\label{GMfig12a}
\end{figure}

We note that for 
nonlinear terms $g$ of the form
$$g(u)=g(u,u_{x},u_{xx}),$$ the Concentration-Compactness theory for (\ref{GM1}) can also be analyzed using the approach in \cite{EsfahaniL2021}. Some of the approximate profiles for this case are given in Figure \ref{GMfig12}.

\begin{figure}[htbp]
\centering
\subfigure[]
{\includegraphics[width=6.27cm,height=5cm]{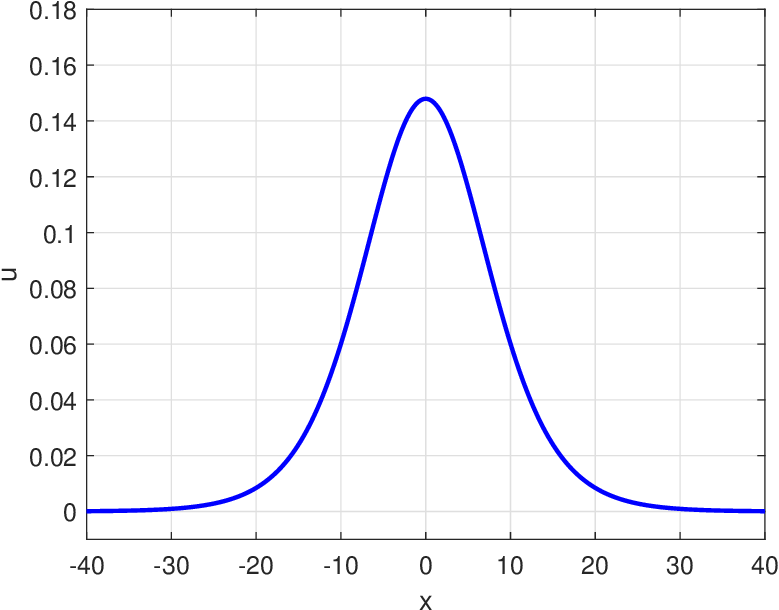}}
\subfigure[]
{\includegraphics[width=6.27cm,height=5cm]{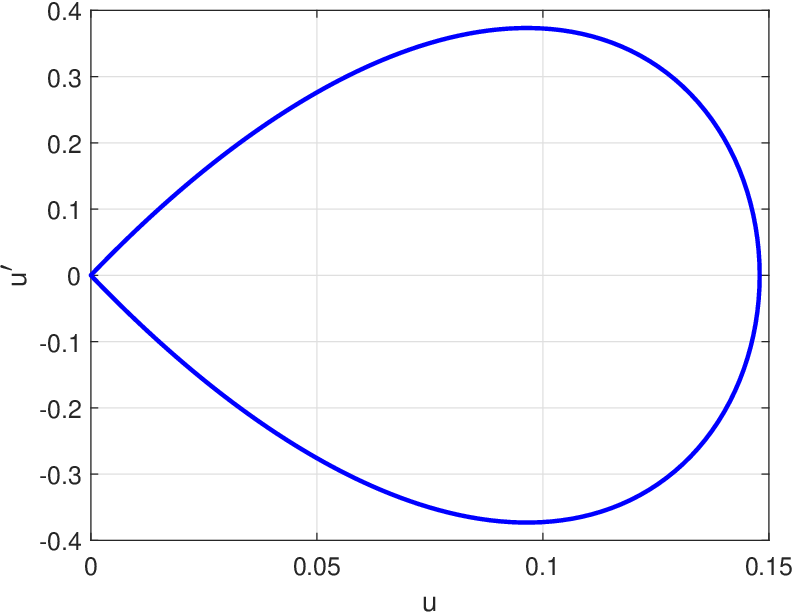}}
\subfigure[]
{\includegraphics[width=6.27cm,height=5cm]{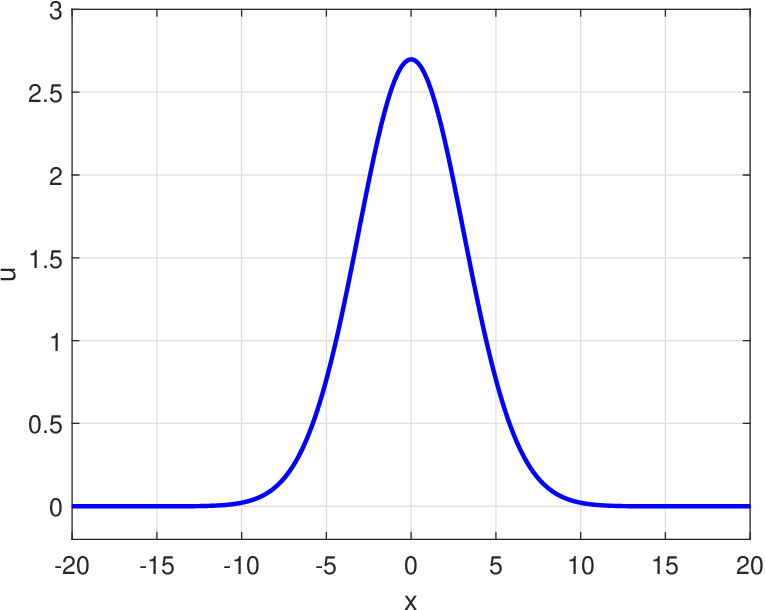}}
\subfigure[]
{\includegraphics[width=6.27cm,height=5cm]{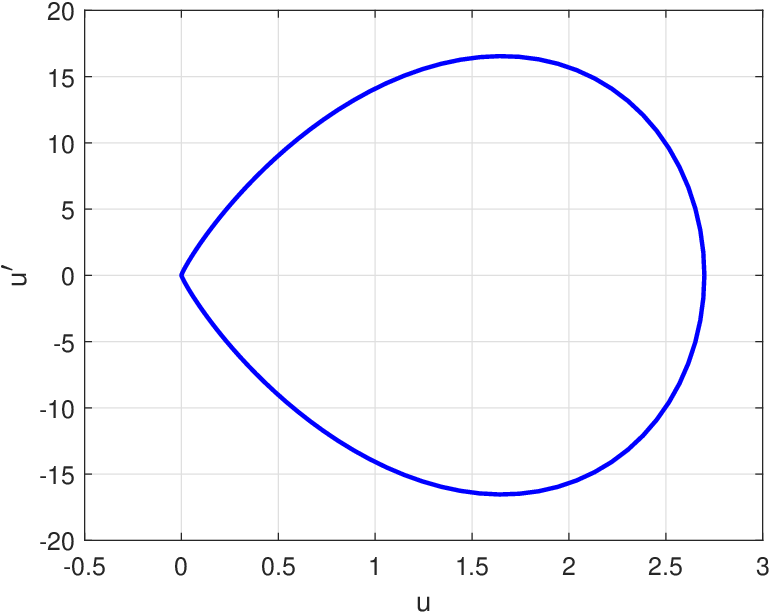}}
%\subfigure[]
%{\includegraphics[width=6.27cm,height=5cm]{R-RLW-KCSW_fig3a.eps}}
%\subfigure[]
%{\includegraphics[width=6.27cm,height=5cm]{R-RLW-KCSW_fig3b.eps}}
%\subfigure[]
%{\includegraphics[width=6.27cm]{case_a6_c3.eps}}
\caption{Generation of CSW's, $u$ profiles and phase portraits. Rosenau-RLW-Kawahara equation equation with $\eta=1, g(u)=u^{2}+u_{x}^{2}+uu_{xx}$ and speeds (a), (b) $c_{s}=1.1$; (c), (d) $c_{s}=2.9$.}
\label{GMfig12}
\end{figure}
\section{Concluding remarks}
\label{sec5}
The present paper is focused on the existence of solitary-wave solutions of equations of Rosenau type of the form (\ref{GM1}), which involve different generalizations of the Rosenau equation derived in \cite{Rosenau1988}. After the study, in section \ref{sec2}, of some mathematical properties of the corresponding ivp, such as well-posedness, conserved quantities and Hamiltonian formulation, we apply two standard theories in order to obtain existence results of solitary-wave solutions. In section \ref{sec3}, for two types of nonlinearities, the equation for the solitary waves (\ref{GM14}) is written as a reversible first-order system (\ref{GM27a}) and where the Normal Form Theory (NFT) is applied. The derivation and analysis of a normal form close to suitable bifurcation curves reveal the existence of classical solitary waves (CSW's) of two types (positive$/$negative and with nonmonotone decay) as well as generalized solitary waves (GSW's). The general conclusions are then applied to several families of Rosenau-type equations, with the purpose of identifying the range of speeds which, depending on the parameters of the equation under study, ensures the existence of each type of solitary wave. A more detailed description is made in Appendix \ref{GMapp1}.

Section \ref{sec4} is devoted to the use of Concentration-Compactness Theory (CCT) with the aim at obtaining additional results on the existence of CSW's and which may extend those from NFT. The corresponding theorem establishes the formation of such waves under a general condition on the speed and ensuring the coercivity property of the minimized functional, a key feature for the successful application of CCT. The resulting restriction on the speed is analyzed for the above mentioned families of Rosenau equations. The main conclusions are that the corresponding results of existence do extend those obtained with NFT for CSW's with nonmonotone decay, in the sense of allowing a wider range of values of velocities which in the case of NFT is somehow restricted because of the local character of this theory.

In both sections \ref{sec3} and \ref{sec4}, the existence results are illustrated with the numerical generation of approximate CSW and GSW solutions, as well as some additional periodic traveling wave (PTW) solutions. The numerical procedure, described and checked in Appendix \ref{GMapp2}, consists of discretizing the fourth-order equation (\ref{GM14}) for the solitary-wave profiles on a long enough interval and periodic boundary conditions with a Fourier collocation scheme, and solving the algebraic system for the discrete Fourier coefficients of the approximation (in the Fourier space) using the Petviashvili iteration. The performance of the method, checked with several examples in Appendix \ref{GMapp2}, guarantees an accuracy of the computations which enables to suggest some additional conclusions on the behaviour of the waves. The main properties concern the CSW's, that seem to decay to zero exponentially (in a monotone or oscillating way, depending on the type of CSW) and whose amplitude seems to be an increasing function of the speed.

We believe that the study performed in the present paper gives a complete enough picture about the existence of solitary-wave solutions of Rosenau-type equations. This will also serve as motivation for a second part, developed in a future research, concerning the dynamics of the waves. It is our purpose to design efficient discretizations of the equations that may suggest, from a computational point of view, some conclusions on different stability issues.

\appendix
\section{NFT for several families of Rosenau equations}
\label{GMapp1}
In this appendix the regions of Figure \ref{GMfig1} are described for several families of Rosenau type equations, according to the parameters and the speed $c_{s}>0$ of the solitary wave. Linear well-posedness ($\alpha^{2}<4\beta$) is assumed.
\subsection{Rosenau-RLW equation ($\eta=\gamma=0, \epsilon, \beta>0$)}
In this case
$$a=a(c_{s})=\frac{c_{s}-\epsilon}{\beta c_{s}},\quad b=b(c_{s})=-\frac{\alpha}{\beta}.$$
\subsubsection{Case $\alpha\leq 0$}
Since $b\geq 0$, then only regions 1, 2, and 3 are possible.
\begin{itemize}
\item Region 1: $a>0, b^{2}<4a, b\geq 0$; then $c_{s}-\epsilon>\frac{\epsilon\alpha^{2}}{4\beta-\alpha^{2}}$. 
\begin{itemize}
\item Close to $\mathcal{C}_{3}\Rightarrow c_{s}-\epsilon-\frac{\epsilon\alpha^{2}}{4\beta-\alpha^{2}}$ is small.
\end{itemize}
\item Region 2: $a>0, b^{2}>4a, b\geq 0$; then $0<c_{s}-\epsilon<\frac{\epsilon\alpha^{2}}{4\beta-\alpha^{2}}$. 
\begin{itemize}
\item Close to $\mathcal{C}_{3}\Rightarrow |c_{s}-\epsilon-\frac{\epsilon\alpha^{2}}{4\beta-\alpha^{2}}|$ is small.
\item Close to $\mathcal{C}_{0}\Rightarrow c_{s}-\epsilon$ is small.
\end{itemize}
\item Region 3: $a<0, b\geq 0$; then $c_{s}-\epsilon<0$. 
\begin{itemize}
\item Close to $\mathcal{C}_{0}\Rightarrow |c_{s}-\epsilon|$ is small.
\end{itemize}
\end{itemize}
\subsubsection{Case $\alpha> 0$}
Since $b< 0$, then only regions 1, 3, and 4 are possible.
\begin{itemize}
\item Region 1: $a>0, b^{2}<4a, b< 0$; then $c_{s}-\epsilon>\frac{\epsilon\alpha^{2}}{4\beta-\alpha^{2}}$. 
\begin{itemize}
\item Close to $\mathcal{C}_{2}\Rightarrow c_{s}-\epsilon-\frac{\epsilon\alpha^{2}}{4\beta-\alpha^{2}}$ is small.
\end{itemize}
\item Region 3: $a<0, b< 0$; then $c_{s}-\epsilon<0$. 
\begin{itemize}
\item Close to $\mathcal{C}_{1}\Rightarrow |c_{s}-\epsilon|$ is small.
\end{itemize}
\item Region 4: $a>0, b^{2}>4a, b<0$; then $0<c_{s}-\epsilon<\frac{\epsilon\alpha^{2}}{4\beta-\alpha^{2}}$. 
\begin{itemize}
\item Close to $\mathcal{C}_{2}\Rightarrow |c_{s}-\epsilon-\frac{\epsilon\alpha^{2}}{4\beta-\alpha^{2}}|$ is small.
\item Close to $\mathcal{C}_{1}\Rightarrow c_{s}-\epsilon$ is small.
\end{itemize}
\end{itemize}
\subsection{Rosenau-KdV equation ($\alpha=\gamma=0, \epsilon, \beta>0$)}
In this case
$$a=a(c_{s})=\frac{c_{s}-\epsilon}{\beta c_{s}},\quad b=b(c_{s})=\frac{\eta}{\beta c_{s}}.$$
Here the discussion depends on the sign of $Q(c_{s}-\epsilon)$ where
\begin{eqnarray*}
Q(x)=x^{2}+\epsilon x-\frac{\eta^{2}}{4\beta}=(x-x_{+})(x-x_{-}),\quad x_{\pm}=\frac{1}{2}\left(-\epsilon\pm\sqrt{\epsilon^{2}+\frac{\eta^{2}}{\beta}}\right),
\end{eqnarray*}
(where $x_{-}<0<x_{+}$) in the sense that
$$b^{2}<4a \; ({\rm resp.} \; b^{2}>4a)\Leftrightarrow Q(c_{s}-\epsilon)>0\; ({\rm resp.}\; Q(c_{s}-\epsilon)<0).$$
Then we have the following situations:
\subsubsection{Case $\eta\geq 0$}
Since $b\geq0$, then only regions 1, 2 and 3 are possible.
\begin{itemize}
\item Region 1: $a>0, b^{2}<4a, b\geq  0$; then $c_{s}-\epsilon>x_{+}$. 
\begin{itemize}
\item Close to $\mathcal{C}_{2}\Rightarrow c_{s}-\epsilon-x_{+}$ is small.
\end{itemize}
\item Region 2: $a>0, b^{2}>4a, b\geq 0$; then $0<c_{s}-\epsilon<x_{+}$. 
\begin{itemize}
\item Close to $\mathcal{C}_{3}\Rightarrow |c_{s}-\epsilon-x_{+}|$ is small.
\item Close to $\mathcal{C}_{0}\Rightarrow c_{s}-\epsilon$ is small.
\end{itemize}
\item Region 3: $a<0, b\geq 0$; then $x_{-}<c_{s}-\epsilon<0$. 
\begin{itemize}
\item Close to $\mathcal{C}_{0}\Rightarrow |c_{s}-\epsilon|$ is small.
\end{itemize}
\end{itemize}
\subsubsection{Case $\eta < 0$}
Since $b< 0$, then only regions 1, 3, and 4 are possible.
\begin{itemize}
\item Region 1: $a>0, b^{2}<4a, b< 0$; then $c_{s}-\epsilon>x_{+}$. 
\begin{itemize}
\item Close to $\mathcal{C}_{2}\Rightarrow c_{s}-\epsilon-x_{+}$ is small.
\end{itemize}
\item Region 3: $a<0, b< 0$; then $x_{-}<c_{s}-\epsilon<0$. 
\begin{itemize}
\item Close to $\mathcal{C}_{1}\Rightarrow |c_{s}-\epsilon|$ is small.
\end{itemize}
\item Region 4: $a>0, b^{2}>4a, b<0$; then $0<c_{s}-\epsilon<x_{+}$. 
\begin{itemize}
\item Close to $\mathcal{C}_{2}\Rightarrow |c_{s}-\epsilon-x_{+}|$ is small.
\item Close to $\mathcal{C}_{1}\Rightarrow c_{s}-\epsilon$ is small.
\end{itemize}
\end{itemize}
\subsection{Rosenau-Kawahara equation ($\alpha=0, \epsilon, \beta>0$)}
In this case
$$a=a(c_{s})=\frac{c_{s}-\epsilon}{\beta c_{s}-\gamma},\quad b=b(c_{s})=\frac{\eta}{\beta c_{s}-\gamma}.$$
Here the discussion depends on the sign of $R(c_{s}-\epsilon)$ where $\rho=\epsilon\beta-\gamma$ and
\begin{eqnarray*}
R(x)=x^{2}+\frac{\rho}{\beta} x-\frac{\eta^{2}}{4\beta}=(x-y_{+})(x-y_{-}),\quad y_{\pm}=\frac{1}{2}\left(-\frac{\rho}{\beta}\pm\sqrt{\left(\frac{\lambda}{\beta}\right)^{2}+\frac{\eta^{2}}{\beta}}\right),
\end{eqnarray*}
(where $y_{-}<0<y_{+}$) in the sense that
$$b^{2}<4a\; ({\rm resp.}\; b^{2}>4a)\Leftrightarrow R(c_{s}-\epsilon)>0 \;({\rm resp.}\;  Q(c_{s}-\epsilon)<0).$$ Then the parameters $a$ and $b$ are rewritten in the form
$$a=a(c_{s})=\frac{c_{s}-\epsilon}{\beta (c_{s}-\epsilon)+\rho},\quad b=b(c_{s})=\frac{\eta}{\beta (c_{s}-\epsilon)+\rho},$$ and the discussion is as follows.
\subsubsection{Case $\rho< 0$}
\begin{itemize}
\item Region 1: We have two possibilities:
$$c_{s}-\epsilon<y_{-}\; {\rm or}\; c_{s}-\epsilon>y_{+}.$$
\begin{itemize}
\item Close to $\mathcal{C}_{2}$ holds when $\eta>0, y_{-}-(c_{s}-\epsilon)$ is small or when
$\eta<0, (c_{s}-\epsilon)-y_{+}$ is small.
\item Close to $\mathcal{C}_{3}$ holds when $\eta<0, y_{-}-(c_{s}-\epsilon)$ is small or when
$\eta>0, (c_{s}-\epsilon)-y_{+}$ is small.
\end{itemize}
\item Region 2: Since $b>0$, we have two possibilities:
$$\eta<0, y_{-}<c_{s}-\epsilon<0\; {\rm or}\; \eta>0, -\rho/\beta<c_{s}-\epsilon<y_{+}.$$
\begin{itemize}
\item Close to $\mathcal{C}_{0}$ holds when $a=a(c_{s})=\frac{c_{s}-\epsilon}{\beta (c_{s}-\epsilon)+\rho}$ is small.
\item Close to $\mathcal{C}_{3}$ holds when $\eta<0, |y_{-}-(c_{s}-\epsilon)|$ is small or when
$\eta>0, |(c_{s}-\epsilon)-y_{+}|$ is small.
\end{itemize}
\item Region 3: Since $a<0$, we have just one case:
$$0<c_{s}-\epsilon<-\rho/\beta.$$
\begin{itemize}
\item Close to $\mathcal{C}_{0}$ holds when additionally $|a|$ is small and $\eta<0$.
\item Close to $\mathcal{C}_{1}$ holds when additionally $|a|$ is small and $\eta>0$.
\end{itemize}
\item Region 4: Since $a<0$, we have two possibilities:
$$\eta>0, y_{-}<c_{s}-\epsilon<0\; {\rm or}\; \eta<0, -\rho/\beta<c_{s}-\epsilon<y_{+}.$$
\begin{itemize}
\item Close to $\mathcal{C}_{1}$ holds when $a=a(c_{s})=\frac{c_{s}-\epsilon}{\beta (c_{s}-\epsilon)+\rho}$ is small.
\item Close to $\mathcal{C}_{2}$ holds when $\eta>0, |y_{-}-(c_{s}-\epsilon)|$ is small or when
$\eta<0, |(c_{s}-\epsilon)-y_{+}|$ is small.
\end{itemize}
\end{itemize}

\subsubsection{Case $\rho> 0$}
\begin{itemize}
\item Region 1: We have two possibilities:
$$c_{s}-\epsilon<y_{-}\; {\rm or}\; c_{s}-\epsilon>y_{+}.$$
\begin{itemize}
\item Close to $\mathcal{C}_{2}$ holds when $\eta>0, y_{-}-(c_{s}-\epsilon)$ is small or when
$\eta<0, (c_{s}-\epsilon)-y_{+}$ is small.
\item Close to $\mathcal{C}_{3}$ holds when $\eta<0, y_{-}-(c_{s}-\epsilon)$ is small or when
$\eta>0, (c_{s}-\epsilon)-y_{+}$ is small.
\end{itemize}
\item Region 2: Since $b>0$, we have two possibilities:
$$\eta<0, y_{-}<c_{s}-\epsilon<-\rho/\beta\; {\rm or}\; \eta>0, 0<c_{s}-\epsilon<y_{+}.$$
\begin{itemize}
\item Close to $\mathcal{C}_{0}$ holds when $a=a(c_{s})=\frac{c_{s}-\epsilon}{\beta (c_{s}-\epsilon)+\rho}$ is small.
\item Close to $\mathcal{C}_{3}$ holds when $\eta<0, |y_{-}-(c_{s}-\epsilon)|$ is small or when
$\eta>0, |(c_{s}-\epsilon)-y_{+}|$ is small.
\end{itemize}
\item Region 3: Since $a<0$, we have just one case:
$$-\rho/\beta<c_{s}-\epsilon<0.$$
\begin{itemize}
\item Close to $\mathcal{C}_{0}$ holds when additionally $|a|$ is small and $\eta>0$.
\item Close to $\mathcal{C}_{1}$ holds when additionally $|a|$ is small and $\eta<0$.
\end{itemize}
\item Region 4: Since $a<0$, we have two possibilities:
$$\eta>0, y_{-}<c_{s}-\epsilon<-\rho/\beta\; {\rm or}\; \eta<0, 0<c_{s}-\epsilon<y_{+}.$$
\begin{itemize}
\item Close to $\mathcal{C}_{1}$ holds when $a=a(c_{s})=\frac{c_{s}-\epsilon}{\beta (c_{s}-\epsilon)+\rho}$ is small.
\item Close to $\mathcal{C}_{2}$ holds when $\eta>0, |y_{-}-(c_{s}-\epsilon)|$ is small or when
$\eta<0, |(c_{s}-\epsilon)-y_{+}|$ is small.
\end{itemize}
\end{itemize}
\subsubsection{Case $\rho= 0$}
Then $a=1/\beta>0$ (there is no region 3) and $y_{\pm}=\pm\eta/4\beta$.
\begin{itemize}
\item Region 1: We have two possibilities:
$$c_{s}-\epsilon<y_{-}\; {\rm or}\; c_{s}-\epsilon>y_{+}.$$
\begin{itemize}
\item Close to $\mathcal{C}_{2}$ holds when $\eta>0, y_{-}-(c_{s}-\epsilon)$ is small or when
$\eta<0, (c_{s}-\epsilon)-y_{+}$ is small.
\item Close to $\mathcal{C}_{3}$ holds when $\eta<0, y_{-}-(c_{s}-\epsilon)$ is small or when
$\eta>0, (c_{s}-\epsilon)-y_{+}$ is small.
\end{itemize}
\item Region 2: Since $b>0$, we have two possibilities:
$$\eta<0, y_{-}<c_{s}-\epsilon<0\; {\rm or}\; \eta>0, 0<c_{s}-\epsilon<y_{+}.$$
\begin{itemize}
\item Close to $\mathcal{C}_{0}$ holds when $a=\frac{1}{\beta }$ is small ($\beta$ large).
\item Close to $\mathcal{C}_{3}$ holds when $\eta<0, |y_{-}-(c_{s}-\epsilon)|$ is small or when
$\eta>0, |(c_{s}-\epsilon)-y_{+}|$ is small.
\end{itemize}
\item Region 4: Since $a<0$, we have two possibilities:
$$\eta>0, y_{-}<c_{s}-\epsilon<0\; {\rm or}\; \eta<0, 0<c_{s}-\epsilon<y_{+}.$$
\begin{itemize}
\item Close to $\mathcal{C}_{1}$ holds when  $a=\frac{1}{\beta }$ is small ($\beta$ large).
\item Close to $\mathcal{C}_{2}$ holds when $\eta>0, |y_{-}-(c_{s}-\epsilon)|$ is small or when
$\eta<0, |(c_{s}-\epsilon)-y_{+}|$ is small.
\end{itemize}
\end{itemize}

\subsection{Rosenau-RLW-Kawahara equation}
Here we discuss the case $\epsilon=\beta=1, \alpha=\gamma=-1$, and $\eta>0$. Then
$$a=a(c_{s})=\frac{c_{s}-1}{c_{s}+1},\quad b=b(c_{s})=\frac{c_{s}+\eta}{c_{s}+1}.$$
Since $b>0$ we just have regions 1 (right), 2, and 3 (right).
Here the discussion depends on the sign of $S(c_{s}-\epsilon)$ where 
\begin{eqnarray*}
S(x)&=&x^{2}+\frac{2(3-\eta)}{3} x-\frac{(1+\eta)^{2}}{3}=(x-z_{+})(x-z_{-}),\\
 z_{\pm}&=&\frac{1}{2}\left(\frac{2(\eta-3)}{3}\pm\sqrt{\left(\frac{2(\eta-3)}{3}\right)^{2}+\frac{4(1+\eta)^{2}}{3}}\right),
\end{eqnarray*}
(where $z_{-}<0<z_{+}$) in the sense that
$$b^{2}<4a \;({\rm resp.}\; b^{2}>4a)\Leftrightarrow S(c_{s}-\epsilon)>0 \;({\rm resp.}\; S(c_{s}-\epsilon)<0).$$
\begin{itemize}
\item Region 1 (right): $c_{s}-\epsilon>z_{+}$.
\begin{itemize}
\item Close to $\mathcal{C}_{3}$ holds when 
$(c_{s}-\epsilon)-z_{+}$ is small.
\end{itemize}
\item Region 2: $0<c_{s}-\epsilon<z_{+}.$
\begin{itemize}
\item Close to $\mathcal{C}_{0}$ holds when $c_{s}-\epsilon$ is small.
\item Close to $\mathcal{C}_{3}$ holds when
$|(c_{s}-\epsilon)-z_{+}|$ is small.
\end{itemize}
\item Region 3 (right): Since $a<0$, we have just 
$$c_{s}-\epsilon<0.$$
\begin{itemize}
\item Close to $\mathcal{C}_{0}$ holds when additionally $|c_{s}-\epsilon|$ is small.
\end{itemize}
\end{itemize}
\section{Numerical generation of  solitary waves}
\label{GMapp2}
In this appendix the numerical method to generate approximations to the solitary wave profiles is described and checked computationally. See e.~g. \cite{pelinovskys,Demirci2022,AlvarezD2014,AlvarezD2015,DDS1} for details.
%\subsection{Numerical generation of  solitary waves}
%\subsubsection{The Petviashvili method}
%%
%%To find the  localized solitary wave solutions of the Rosenau-type equations of the form \eqref{GM1}, we use the ansatz $u(x,t)=Q_c(\eta), ~~\eta=x-ct$
%%with $\displaystyle{\lim_{|\eta| \rightarrow \infty} Q_c(\eta)=0}$ which leads to  the ordinary differential equation
%%\begin{equation}
%% (\gamma -c\beta) Q_c^{\prime\prime\prime\prime\prime}+ (\eta -c\alpha)Q_c^{\prime\prime\prime}  +(\epsilon-c) Q_c^{\prime}  + \left[ g(Q_c)\right]' =0.
%%  \label{solitary}
%%\end{equation}
%
\subsection{The Petviashvili method}
Applying   the Fourier transform to the equation \eqref{GM14} yields
\begin{equation}
\left[(\gamma -c_{s}\beta)k^{4}- (\eta -c_{s}\alpha)k^{2}+(\epsilon-c) \right]   \widehat{\varphi}(k)=
-\widehat{g(\varphi)}(k),\quad k\in\mathbb{R}.\label{B1}
\end{equation}
The equation (\ref{B1}) can be solved iteratively with
the Petviashvili method, \cite{Petv1976}, which formally generates a sequence of approximations $\varphi_{n}, n=0,1,\ldots,$ from the iteration in the Fourier space
\begin{equation}
   \widehat{\varphi}_{n+1}(k)= -(M_n)^{\nu} \frac{\widehat{g(\varphi)}(k)}{(\gamma -c_{s}\beta)k^{4}- (\eta -c_{s}\alpha)k^{2}+(\epsilon-c_{s}) } \label{scheme}
\end{equation}
where $M_{n}$ denotes the stabilizing factor
\begin{equation*}
  M_{n}= - \frac{\int_{\mathbb{R}} \left[(\gamma -c_{s}\beta)k^{4}- (\eta -c_{s}\alpha)k^{2}+(\epsilon-c_{s}) \right]   [\widehat{\varphi}_{n}(k)]^2 dk }{\int_{\mathbb{R}}\displaystyle \widehat{g(\varphi)}(k) \widehat{\varphi}_{n}(k)dk },
\end{equation*}
for some parameter $\nu$. The iteration was analyzed in detail (including convergence issues)  in e.~g.  \cite{pelinovskys} for a general class of nonlinear dispersive equations (see also \cite{AlvarezD2014,AlvarezD2015}). 
In the case of equations of the form (\ref{GM1}), the Petviashvili iteration was applied to Rosenau equation in \cite{Erbay2020}. 

The overall iterative process can be  controlled in several ways: by the $L^{2}$ error between two consecutive iterations
\begin{equation*}
  Error(n)=\|\varphi_n-\varphi_{n-1}\|_{2},\quad n=0,1,\ldots, \label{error1}
\end{equation*}
by the error in   the stabilization factor (which in case of convergence must tend to one)
\begin{equation}
|1-M_n|, \quad n=0,1,\ldots, \label{error2}
\end{equation}
and the residual error
\begin{equation}
{RES(n)}= \|{\mathcal{R}} \varphi_n\|_{2}, \quad n=0,1,\ldots,\label{error3}
\end{equation}
where
\begin{equation}
{\mathcal{R}}\varphi= S\varphi+g(\varphi),
\nonumber
\end{equation}
where $S$ is the linear operator with Fourier symbol
\begin{eqnarray*}
\widehat{S\varphi}(k)=\left[(\gamma -c_{s}\beta)k^{4}- (\eta -c_{s}\alpha)k^{2}+(\epsilon-c_{s}) \right]   \widehat{\varphi}(k),\quad k\in\mathbb{R}.
\end{eqnarray*}
The practical application of the Petviashvili approach considers a long enough interval where (\ref{GM14}) is posed, along with periodic boundary conditions. The equation is discretized with a Fourier collocation procedure based on a uniform grid of collocation points: the profile $\varphi$ is then approximated by the corresponding trigonometric interpolant; equation (\ref{B1}) for the corresponding discrete Fourier coefficients is then solved iteratively with (\ref{scheme}) via de Discrete Fourier Transform, see e.~g. \cite{DougalisDM2015} for details.  The performance of the method can be improved by including some kind of acceleration with extrapolation techniques, cf. \cite{AlvarezD2015}. 
\subsection{Some examples of accuracy}
In order to check the accuracy of the computations developed in the present paper, we test here the performance of the Petviashvili iteration for some of the Rosenau-type equations by comparing the numerical approximation of the profiles with exact solutions, known for specific values of the parameters or the speed. For the experiments below, we took an interval
$-100 \le x \le 100$ and $N=1024$ spatial collocation points. 
\subsubsection{Rosenau-RLW equation}
Choosing $\gamma=\eta=0$, the equation \eqref{GM1} reduces to Rosenau-RLW equation 
 \begin{eqnarray}
u_{t}+\epsilon u_{x}+\alpha u_{xxt}+\beta u_{xxxxt}+(g(u))_{x}=0.  \label{Rosenau-RLW}
\end{eqnarray}
If $\alpha <0, c_{s} > \epsilon$ and
\begin{equation*}
\beta=\frac{36 c_{s}\alpha^2}{169 (c_{s}-\epsilon)},
\end{equation*}
then the exact solitary wave solution of the equation \eqref{Rosenau-RLW} is given by
\begin{equation}
u(x,t)=\frac{35}{12} (c_{s}-\epsilon) \mbox{sech}^4 \left( \sqrt{\frac{13(\epsilon -c_{s})}{144 c \alpha}} (x-c_{s}t) \right),\label{KRLWsol}
\end{equation}
for the quadratic nonlinearity $g(u)=\frac{u^2}{2}$ in \cite{EsfahaniP2014}. The approximate profile obtained with the Petviashvili iteration with $c_{s}=5$ is shown in Figure \ref{App2fig1}(a). The $L^{\infty}$ error  between the exact and numerical solution was computed as $8.9 \times 10^{-15}$. In Figure \ref{App2fig1}(b) the accuracy of the iteration is checked by computing the errors (\ref{error2}), (\ref{error3}), measured as functions of the number of iterations.
\begin{figure}[htbp]
\centering
\subfigure[]
{\includegraphics[width=6.27cm,height=5cm]{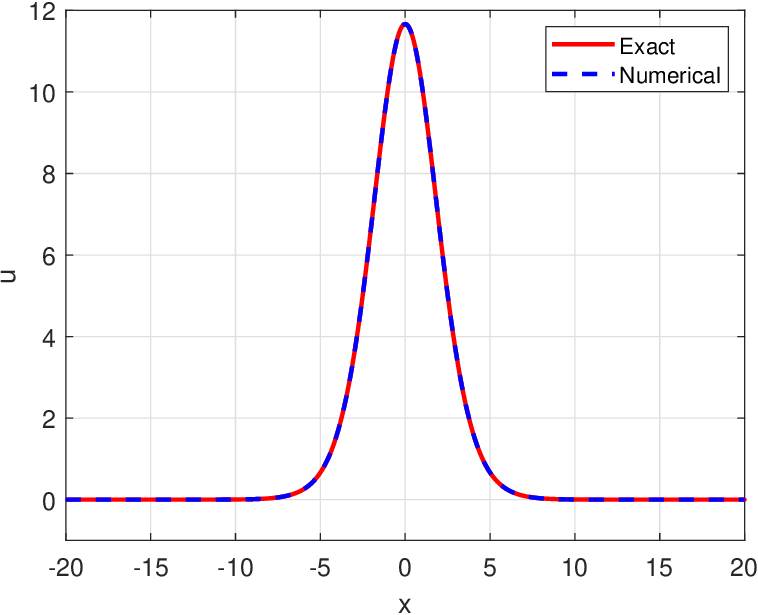}}
\subfigure[]
{\includegraphics[width=6.27cm,height=5cm]{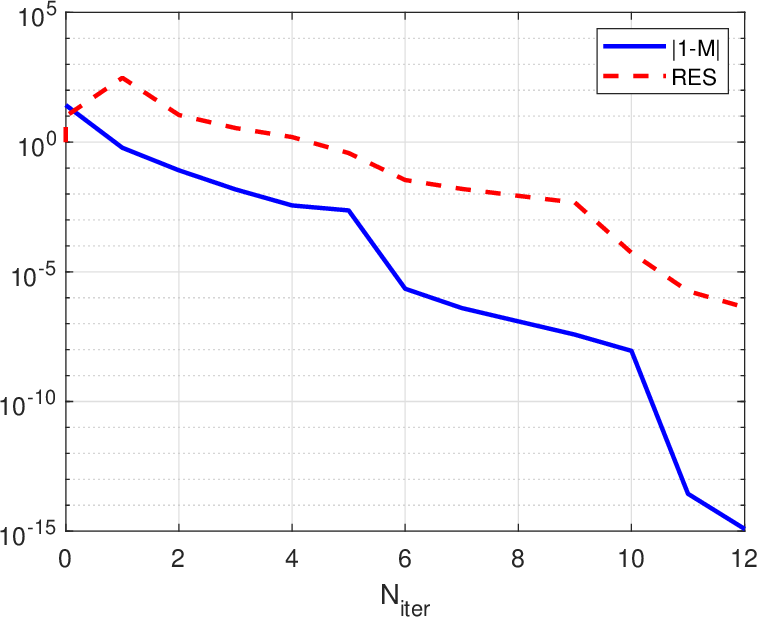}}
%\subfigure[]
%{\includegraphics[width=6.27cm]{case_a6_c3.eps}}
\caption{(a) Exact profile (\ref{KRLWsol}) and numerical  solution given by Petviashvili's method with $N=1024$; (b) evolution of errors (\ref{error2}), (\ref{error3}) with the number of iterations in semi-log scale.}
\label{App2fig1}
\end{figure}

\subsubsection{Rosenau-KdV equation}
Choosing $\eta=\gamma=0$, the equation \eqref{GM1} reduces to Rosenau-KdV equation given by
\begin{eqnarray}
u_{t}+\epsilon u_{x}+\alpha u_{xxt}+\beta u_{xxxxt}+(g(u))_{x}=0.\label{Rosenau-KdV}
\end{eqnarray}
The exact solitary wave solution of the equation \eqref{Rosenau-KdV} corresponding to the wave speed
$c_{s}=\frac{1}{2}+\frac{1}{26}\sqrt{313}$ is given by
\begin{equation}
u(x,t)=\left(-\frac{35}{24}+\frac{35}{312}\sqrt{313} \right)  \mbox{sech}^4
\left[ \frac{1}{24}\sqrt{-26+2\sqrt{313}} \left( x-\left( \frac{1}{2}+\frac{1}{26}\sqrt{313} \right)t\right) \right]
\label{Rosenau-KdV-exact}
\end{equation}
for the quadratic nonlinearity $g(u)=\frac{u^2}{2}$ in \cite{Zuo2006}. The accuracy of the Petviashvili iteration for this case, in similar terms to those considered in Figure \ref{App2fig1}, is checked in Figure \ref{App2fig2}.
\begin{figure}[htbp]
\centering
\subfigure[]
{\includegraphics[width=6.27cm,height=5cm]{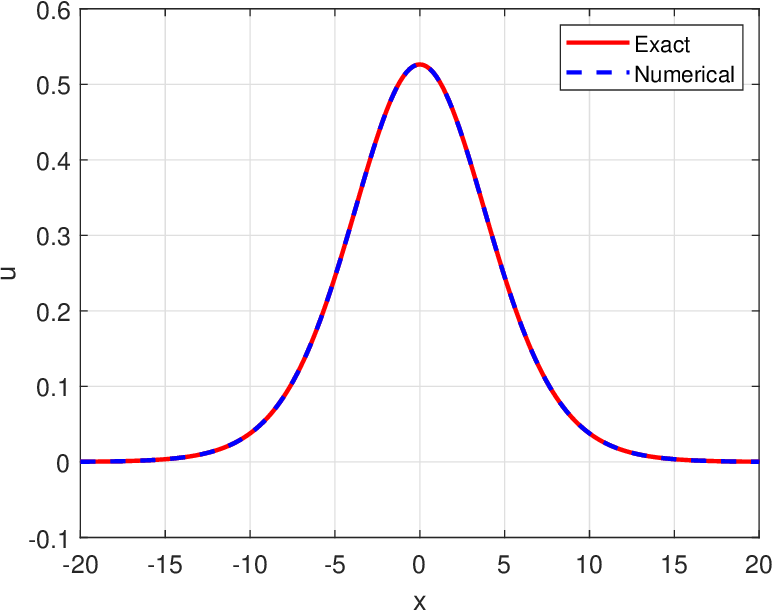}}
\subfigure[]
{\includegraphics[width=6.27cm,height=5cm]{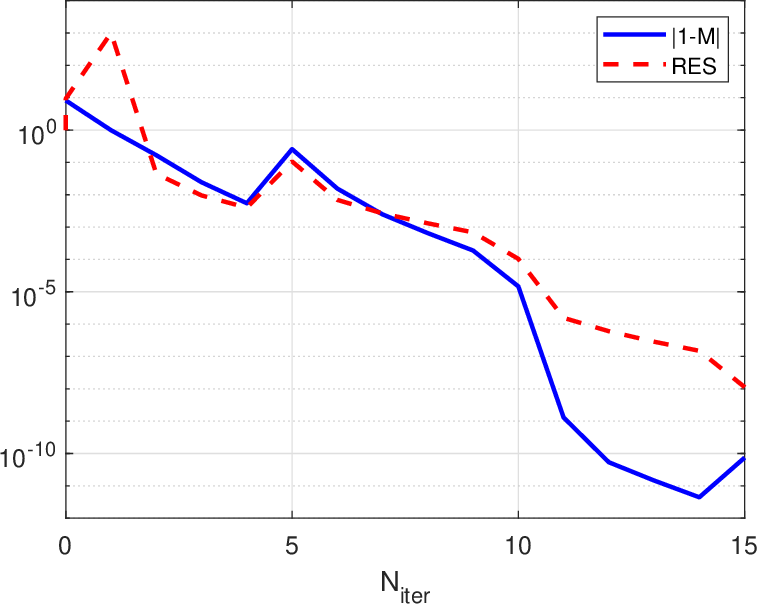}}
%\subfigure[]
%{\includegraphics[width=6.27cm]{case_a6_c3.eps}}
\caption{(a) Exact profile (\ref{Rosenau-KdV-exact}) and numerical  solution given by Petviashvili's method with $N=1024$; (b) evolution of errors (\ref{error2}), (\ref{error3}) with the number of iterations in semi-log scale.}
\label{App2fig2}
\end{figure}

\subsubsection{Rosenau-Kawahara equation}
Choosing $\alpha=0$, the equation \eqref{GM1} reduces to Rosenau-Kawahara equation given by
\begin{eqnarray}
u_{t}+\epsilon u_{x}+\eta u_{xxx}+\beta u_{xxxxt}+\gamma u_{xxxxx}+(g(u))_{x}=0.\label{Rosenau-Kawahara}
\end{eqnarray}
Taking $\epsilon=\eta=\beta=1$, $\gamma=-1$, the exact solitary wave solution of the equation \eqref{Rosenau-Kawahara} corresponding to the wave speed
$c_{s}=\frac{1}{13}\sqrt{205}$ is given by
\begin{equation}
u(x,t)=\left(-\frac{35}{12}+\frac{35}{156}\sqrt{205} \right)  \mbox{sech}^4
\left[ \frac{1}{12}\sqrt{-13+\sqrt{205}} \left( x- \frac{1}{13}\sqrt{205} t\right) \right]
\label{Rosenau-K-exact}
\end{equation}
for the quadratic nonlinearity $g(u)=\frac{u^2}{2}$ in \cite{Zuo2006}. The corresponding accuracy results are shown in Figure \ref{App2fig3}.
\begin{figure}[htbp]
\centering
\subfigure[]
{\includegraphics[width=6.27cm,height=5cm]{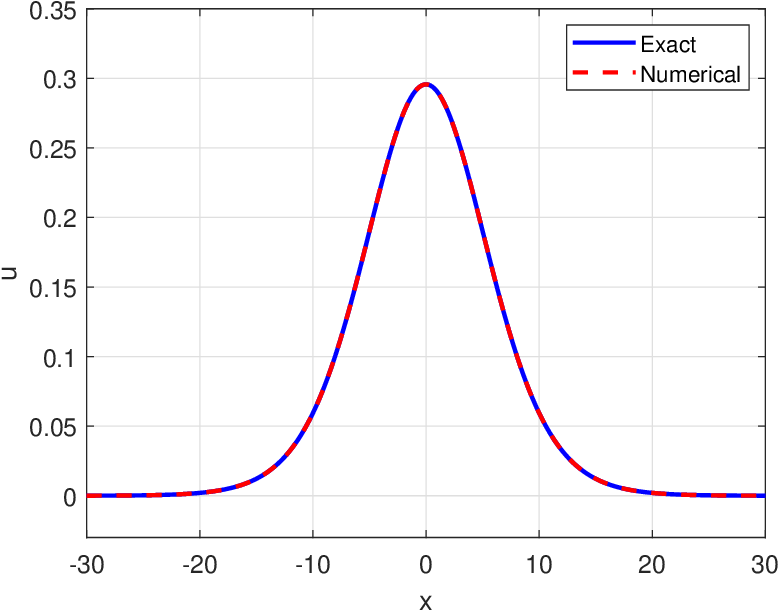}}
\subfigure[]
{\includegraphics[width=6.27cm,height=5cm]{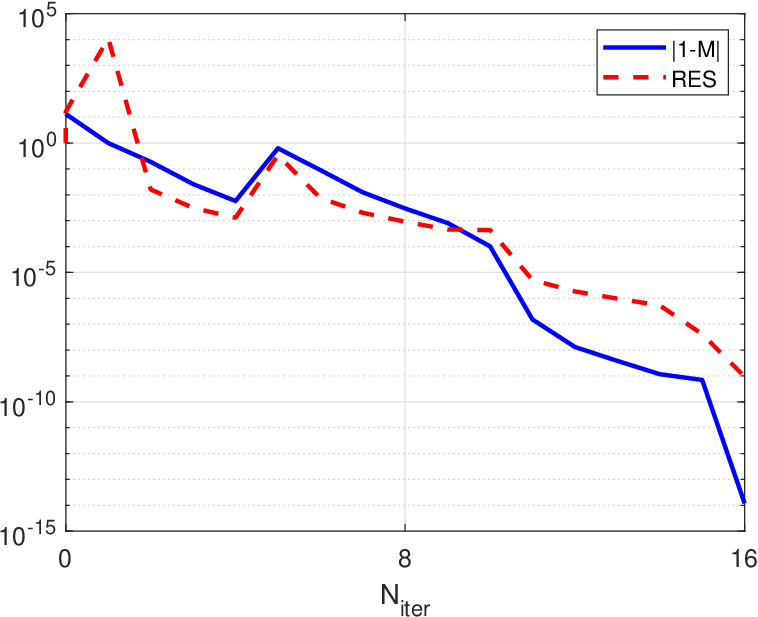}}
%\subfigure[]
%{\includegraphics[width=6.27cm]{case_a6_c3.eps}}
\caption{(a) Exact profile (\ref{Rosenau-K-exact}) and numerical  solution given by Petviashvili's method with $N=1024$; (b) evolution of errors (\ref{error2}), (\ref{error3}) with the number of iterations in semi-log scale.}
\label{App2fig3}
\end{figure}

\end{document}